\newtheorem{theor}{Theorem}[section]
\newtheorem*{theorem*}{Theorem}
\newtheorem{lemma}[theor]{Lemma}
\newtheorem{cor}[theor]{Corollary}
\newtheorem{defi}[theor]{Definition}
\newtheorem{prop}[theor]{Proposition}
\theoremstyle{definition}
\newtheorem{rem}[theor]{Remark}
\theoremstyle{plain}
\newtheorem{Problem}{Problem}[section]
\newcommand{\N}{\mathbb{N}}
\newcommand{\Event}{\mathcal{E}}
\def\Prob{{\mathbb P}}
\definecolor{b}{HTML}{4472c4}
\definecolor{o}{HTML}{ED7D31}
\definecolor{g}{HTML}{70ad47}
\definecolor{t}{RGB}{40,154,150}
\newcommand\HL[1]{#1}%{\color{b}#1}}
\newcommand\co[1]{{#1}}%\color{o}#1}}
\newcommand\snb[1]{ \stackrel{#1}{\sim}}
\def\Cgl{{\mathcal C}}
\title{Shotgun assembly of unlabeled Erd\H os--R\'enyi graphs}
\author{Han Huang
\email{han.huang@math.gatech.edu}
}
\author{Konstantin Tikhomirov
\email{ktikhomi@andrew.cmu.edu}
}
\thanks{K.T. is partially supported by the Sloan Research Fellowship and NSF grant DMS-2054666}
\def\N{{\mathbb N}}
\def\Prob{{\mathbb P}}
\def\Exp{{\mathbb E}}
\def\Event{{\mathcal E}}
\def\deg{{\rm deg}}
\date{\today}
\def\cal{\mathcal}
\def\maj{vast majority}
\def\S_2{{[n] \choose 2}}
\begin{document}

\maketitle

\begin{abstract}
Given a positive integer
$n$, an unlabeled graph $G$ on $n$ vertices, \co{and a vertex
$v$ of $G$, let  $N_G(v)$ be the subgraph of $G$ induced by vertices of $G$ of distance at most one from $v$.} We show that there are universal constants $C,c>0$ with the following property. 
Let the sequence $(p_n)_{n=1}^\infty$ satisfy $n^{-1/2}\log^C n\leq p_n\leq c$.
For each $n$, let $\Gamma_n$ be an unlabeled $G(n,p_n)$ Erd\H os--R\'enyi graph.
Then with probability $1-o_n(1)$, any unlabeled graph $\tilde \Gamma_n$ on $n$ vertices with
$\{N_{\tilde \Gamma_n}(v)\}_{v}=\{N_{\Gamma_n}(v)\}_{v}$ must coincide with $\Gamma_n$.
This establishes $\tilde \Theta(n^{-1/2})$
as the transition range for the density parameter $p_n$ between reconstructability and non-reconstructability
of Erd\H os--R\'enyi graphs from their $1$--neighborhoods, and
resolves a problem of Gaudio and Mossel from \cite{GM}.
\end{abstract}

\section{Introduction}

The problem of reconstructing a labeled or unlabeled random graph from vertex neighborhoods of a given radius $r$
\co{was} introduced by Mossel and Ross in \cite{MosselRoss}
and has been a subject of very active research since then.
This problem can be viewed as an abstract form of the well studied {\it shotgun assembly} of DNA, \co{which is the problem of reconstructing DNA sequences by observing
a family of short subsequences possibly corrupted by noise}
(see, in particular, \cite{AMRW,MBT} for information theoretical aspects of DNA reconstruction).
The problem of graph assembly can be viewed as a question whether
the local structure of a graph contains all the information about its global structure.
Since its introduction in \cite{MosselRoss}, some specific instances of random graph assembly have been studied
in works \cite{BBN, BFM, GM, M, MS, NPS, PRS, DJM, DL, GRS, JKRS}.
The problem is closely related to the famous Graph Reconstruction Conjecture\co{, which states that for any graph $G$ with at least 3 vertices, $G$
can be uniquely reconstructed (up to an isomorphism)
from the multiset of induced
subgraphs of $G$ obtained by removing a single vertex from $G$ }\cite{Kelly,Bondy,Ulam}.

Let us give a precise formulation of the problem we are considering here.
Given two graphs $G_1$ and $G_2$ with the vertex set $[n]:=\{1,2,\dots,n\}$, we say
that $G_1$ and $G_2$ are isomorphic if there is a bijective mapping $f:[n]\to[n]$ such that for every pair of indices $i\neq j$
from $[n]$, $\{i,j\}$ is an edge of $G_1$ if and only if $\{f(i),f(j)\}$ is an edge of $G_2$.
The isomorphism is an equivalence relation which splits the set of graphs on $[n]$ into equivalence classes.
We will refer to an equivalence class with respect to this relation as an {\it unlabeled graph on $n$ vertices}.

Let $r>0$ be a fixed integer. 
Given an unlabeled graph $G$ on $n$ vertices, we define the $n$--multiset of vertex $r$--neighborhoods of $G$
as follows. Let $G'\in G$ be any labeled representative of the equivalence class $G$. For each vertex $v$ of $G'$,
we define {\it the $r$--neighborhood $N_{G',r}(v)$ of $v$}
as the subgraph induced by vertices of $G'$ at distance at most $r$ from $v$. We then define the $n$--multiset
$\{N_{G,r}(v)\}_v$ of vertex $r$--neighborhoods of $G$ as the multiset of the graph equivalence classes represented by $N_{G',r}(v)$, $v=1,2,\dots,n$. Note that the definition is consistent in the sense that it does not depend on the choice of the
representative $G'$.
We want to emphasize here that everywhere in this note the term ``$r$--neighborhood'' refers to an induced subgraph
rather than just a subset of vertices.

Given an unlabeled graph $G$ on $n$ vertices, we say that $G$ is {\it reconstructable} from its $r$--neighborhoods
if the following holds: whenever $\tilde G$ is an unlabeled graph on $n$ vertices such that the $n$--multisets
of vertex $r$--neighborhoods of $G$ and $\tilde G$ coincide, we necessarily have $G=\tilde G$.
Otherwise, we will say that the graph $G$ is {\it non-reconstructable}.

The main goal of this note is to study reconstructability of unlabeled Erd\H os--R\'enyi $G(n,p)$ graphs.
We recall that, given parameters $n$ and $p$, the $G(n,p)$ graph on the vertex set $[n]$ is defined
by drawing an edge between any given pair of vertices $v,w$ with probability $p$, independently from other edges.
By the unlabeled $G(n,p)$ graph we understand the (random) equivalence class of the random labeled
$G(n,p)$ graph on $[n]$.
The major question on graph reconstructability is

\medskip

\begin{center}
    \bf for which values of $p$ and $r$  is the unlabeled $G(n,p)$ graph reconstructable with high probability?
\end{center}

\medskip

Below, we focus on the setting $r=1$ ($1$--neighborhoods).
In work \cite{GM}, Gaudio and Mossel showed that for any constant $\varepsilon>0$
and for any positive sequence $(p_n)_{n=1}^\infty$ satisfying $n^{-1+\varepsilon}\leq p_n\leq n^{-1/2-\varepsilon}$,
the unlabeled $G(n,p_n)$ graph is asymptotically almost surely (a.a.s) non-reconstructable from its $1$--neighborhoods
(that is, the probability that the graph is non-reconstructable tends to one as $n\to\infty$).
In fact, by using a slightly more refined argument, it is possible to show that 
$G(n,p_n)$ is a.a.s non-reconstructable whenever $\omega(n^{-1}\log n)= p_n=o(n^{-1/2})$,
where we use the standard notation $\omega(g)$ to denote a non-negative quantity such that $\frac{\omega(g)}{g}\to \infty$
as $n$ tends to infinity.
\begin{theor}[{A slight refinement of \cite[Theorem~3]{GM}}]\label{3u14o21u4oi}
Let the sequence $(p_n)_{n=1}^\infty$ satisfy $p_n=\omega(n^{-1}\log n)$ and $p_n=o(n^{-1/2})$.
Then the unlabeled $G(n,p_n)$ graph is asymptotically almost surely non-reconstructable from its $1$--neighborhoods.
\end{theor}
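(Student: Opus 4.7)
The plan is to carry out a switching argument in the spirit of~\cite{GM}, with sharper estimates at the endpoints of the parameter range. Let $G \sim G(n, p_n)$. I would aim to show that a.a.s.\ $G$ admits a \emph{good $4$-switch}: an ordered tuple of distinct vertices $(a, b, c, d)$ such that $\{a,b\}, \{c,d\}$ are edges of $G$, the four pairs $\{a,c\}, \{a,d\}, \{b,c\}, \{b,d\}$ are non-edges of $G$, and no vertex $v \in [n]\setminus\{a,b,c,d\}$ is a common neighbor in $G$ of any of the four ``active'' pairs $\{a,b\}, \{c,d\}, \{a,c\}, \{b,d\}$. Under these conditions a direct verification shows that the graph $G'$ obtained from $G$ by deleting the edges $\{a,b\}, \{c,d\}$ and inserting the edges $\{a,c\}, \{b,d\}$ has the same multiset of $1$--neighborhoods as $G$: for $v \notin \{a,b,c,d\}$ the induced subgraphs $N_G(v)$ and $N_{G'}(v)$ coincide as labeled graphs, since no active pair lies inside the neighbor set of $v$; and for $v \in \{a,b,c,d\}$ the natural vertex bijection that swaps the unique replaced neighbor (for instance $b \leftrightarrow c$ at $v = a$) is an isomorphism of labeled neighborhoods, because the common-neighbor condition forces both $b$ and $c$ to be non-adjacent in $G$ to every other neighbor of $a$.

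The next step is a first- and second-moment computation for the number of ordered good $4$-tuples. The probability that a fixed $4$-tuple has the prescribed edge pattern is $p_n^2 (1-p_n)^4 = (1-o(1)) p_n^2$, and the conditional probability that no outside vertex is a common neighbor of any of the four active pairs is $(1 - p_n^2)^{4(n-4)} (1 + o(1)) = \exp(-\Theta(n p_n^2))$, which tends to $1$ because $n p_n^2 = o(1)$ throughout the regime $p_n = o(n^{-1/2})$. Hence the expected number of ordered good $4$-tuples is of order $n^4 p_n^2$, which diverges even at the lower end $p_n = \omega(n^{-1} \log n)$. A standard variance calculation, carried out by enumerating the intersection patterns of two $4$-tuples, gives concentration and establishes that good $4$-tuples exist a.a.s.

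The final and most delicate step is to produce a single good $4$-switch that is \emph{effective}, meaning $G' \not\cong G$ as unlabeled graphs. The argument: if $G \cong G'$ via some bijection $\phi:[n]\to[n]$, then $\phi$ must carry the edge symmetric difference $E(G)\triangle E(G') = \{\{a,b\},\{c,d\},\{a,c\},\{b,d\}\}$ to itself, hence must permute $\{a,b,c,d\}$; for each of the at most $24$ choices of this permutation, one estimates the probability that $\phi$ extends to an isomorphism using the a.a.s.\ asymmetry of $G(n, p_n)$ in this parameter range (which holds precisely when $p_n = \omega(n^{-1} \log n)$, by classical results) together with the fact that for typical $4$-tuples the degrees and $2$-neighborhoods of $a, b, c, d$ are mutually distinguishable. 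The expected number of \emph{ineffective} good $4$-tuples is thereby shown to be of strictly lower order than the total expected count, which establishes the theorem. The main obstacle is this last estimate: controlling the non-isomorphism condition uniformly across the full parameter range, especially near the endpoint $p_n \sim n^{-1/2}$ where triangles through $\{a,b,c,d\}$ become prevalent and must be ruled out. The refinement over the stated range in~\cite{GM} comes from tracking $n p_n^2 = o(1)$ in the common-neighbor estimate rather than the cruder $n^{-2\varepsilon}$ bound that is implicit when $p_n \le n^{-1/2-\varepsilon}$.
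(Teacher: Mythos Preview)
Your approach is genuinely different from the paper's: you pursue a constructive switching argument in the style of \cite{GM}, while the paper gives an information-theoretic proof via Shannon entropy. The paper shows that in the stated regime $H_2(\Gamma_n)=\omega\big(n\cdot H_2(N_{\Gamma_n,1})\big)$, using \cite{Choi} for the entropy of an unlabeled $G(n,p)$ graph and a direct estimate for the entropy of a random $1$--neighborhood; it then argues that if $\Gamma_n$ were reconstructable with probability bounded away from zero along a subsequence, the conditional entropy of $\Gamma_n$ on that event would be at most $\delta^{-1}H_2(\mathcal N_n)\le \delta^{-1}\,n\,H_2(N_{\Gamma_n,1})=o(H_2(\Gamma_n))$, which is shown to be impossible. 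The virtue of this route is that it never needs to exhibit a specific non-isomorphic $G'$ or analyze automorphisms; the gap in information content does all the work.

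Your Steps~1 and~2 are essentially correct. But Step~3 contains a genuine error: the assertion that an isomorphism $\phi:G\to G'$ must carry $E(G)\triangle E(G')$ to itself is not valid. From $\phi(E(G))=E(G')$ you get $\phi(E(G)\setminus E(G'))=E(G')\setminus\phi(E(G)\cap E(G'))$, and this equals $E(G')\setminus E(G)$ only if $\phi$ happens to fix the common edge set $E(G)\cap E(G')$ setwise, which nothing forces. Consequently there is no reason $\phi$ must permute $\{a,b,c,d\}$, and the reduction to $24$ cases collapses. You yourself flag this step as ``the main obstacle,'' and indeed it is: ruling out $G'\cong G$ requires a separate argument (for instance via canonical labelings or refined degree statistics distinguishing the switched vertices), which you have not supplied. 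The entropy proof in the paper sidesteps this difficulty entirely, which is precisely what makes it a refinement rather than a sharpening of the moment bounds in \cite{GM}.
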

For completeness, we give a proof of Theorem~\ref{3u14o21u4oi} in Section~\ref{s:entropy}. We remark here that after this manuscript was posted, a further improvement of \cite[Theorem~3]{GM} was obtained in \cite{JKRS}.

\medskip

On the other hand, using the fingerprinting technique, 
which relates
an edge $\{u,v\}$ of a graph to
the induced subgraph on the set of common neighbors of $u$ and $v$ (so the subgraph %is a "fingerprint" of the edge
appears in both $1$-neighbors of $v$ and $u$), the authors of \cite{GM} showed that for $n^{-1/3+\varepsilon}\leq p_n\leq n^{-\varepsilon}$,
the $G(n,p_n)$ graph is a.a.s reconstructable from $1$--neighborhoods.
Note that the lower bound $n^{-1/3+o(1)}$ does not match
the aforementioned range for non-reconstrucability established in
\cite[Theorem~3]{GM}, and identifying a sharp threshold
has remained an open problem.
The main result of our paper is the following theorem:
\begin{theor}\label{19471048710498}
There are universal constants $C,c>0$ with the following property.
Let positive sequence $(p_n)_{n=1}^\infty$ satisfy $n^{-1/2}\log^{C}
n\leq p_n\leq c$ for large $n$.
Then the unlabeled $G(n,p_n)$ graph is reconstructable from its $1$--neighborhoods
with probability $1-n^{-\omega(1)}$.
\end{theor}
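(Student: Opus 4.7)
The plan is to reconstruct $G=G(n,p_n)$ from its multiset of $1$-neighborhoods by an extended fingerprinting argument that pushes the approach of \cite{GM} all the way down to the critical scale $p_n\approx n^{-1/2}$. To each candidate edge I would attach a local fingerprint computable from the unlabeled data, designed so that with probability $1-n^{-\omega(1)}$ the fingerprints agree exactly when the two endpoints are adjacent in $G$; reconstruction is then obtained by collecting the matching pairs.

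As a first step, I would condition on a high-probability ``typicality'' event on which $G(n,p_n)$ behaves as expected: all vertex degrees lie within $(1\pm o(1))np_n$; all common-neighborhood sizes $|N(u)\cap N(v)|$ for adjacent $u,v$ lie within $(1\pm o(1))np_n^2$; various higher-order substructure counts concentrate near their expectations; and (using $np_n\gg\sqrt{n}$) all rooted $1$-neighborhoods $\{(N_G(v),v)\}_{v\in V(G)}$ are pairwise non-isomorphic. On this event each unlabeled $N_G(v)$ has a unique vertex of maximum degree, so its ``center'' is intrinsically identifiable, and the input becomes a set of rooted graphs in bijection with $V(G)$.

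For candidate endpoints $u,w$ (represented by the centers of two rooted $1$-neighborhoods), the fingerprint is obtained by searching for a non-central vertex $\tilde w\in N_G(u)\setminus\{u\}$ and a non-central vertex $\tilde u\in N_G(w)\setminus\{w\}$ whose local environments are consistent: the induced subgraph on the neighbors of $\tilde w$ in $N_G(u)$ other than $u$ should be isomorphic to the induced subgraph on the neighbors of $\tilde u$ in $N_G(w)$ other than $w$, and the isomorphism should pair vertices in a way that is compatible with the attached degree data, namely the multiset $\{(|N(x)\cap N(u)|,|N(x)\cap N(w)|):x\in N(u)\cap N(w)\}$. When $\{u,w\}$ really is an edge, taking $\tilde w=w$ and $\tilde u=u$ realizes this consistency on the common-neighborhood subgraph; the algorithm declares $\{u,w\}$ to be an edge precisely when such a matching can be found.

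The main obstacle is to show that, with probability $1-n^{-\omega(1)}$, no two distinct candidate pairs produce matching fingerprints. At the critical scale the common-neighborhood subgraph has only $\Theta(\log^{2C} n)$ vertices and essentially no internal edges, so the base \cite{GM} fingerprint alone is too weak, but the augmented degree-pair multiset carries far more information, heuristically much more than the $O(\log n)$ bits needed to separate all $O(n^2 p_n)$ edges. Turning this entropy heuristic into a rigorous bound requires anti-concentration estimates on the joint distribution of common-neighborhood size, induced substructure, and attached second-layer degrees in $G(n,p_n)$. Concretely one would aim to bound, for each fixed pair of confusable candidates, the collision probability by $n^{-\omega(1)}$, and then conclude by a union bound over $O(n^4)$ pairs. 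Handling this anti-concentration uniformly, while controlling the delicate dependencies between the common-neighborhood structure and the degrees of its vertices in the ambient graph, is where I expect the bulk of the technical work to lie.
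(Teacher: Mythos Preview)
Your proposal is an outline, not a proof, and the outline leaves the central difficulty unresolved. You correctly identify that at $p\approx n^{-1/2}$ the common-neighborhood subgraph on $N(u)\cap N(w)$ carries essentially no edge structure, and you propose to compensate by attaching degree-pair data; but the ``anti-concentration'' you would need---that no false pair $(\tilde w,\tilde u)$ can ever mimic a true edge's fingerprint, uniformly over all $O(n^2)$ non-edges and all $O((np)^2)$ candidate placements---is exactly the theorem, and you do not attempt it. The entropy heuristic is suggestive but far from a proof: the statistics $|N(x)\cap N(u)|$ for $x$ ranging over a common neighborhood are neither independent nor identically distributed once one conditions on the structure already visible in $N_G(u)$, and the matching is a search over bijections, so a naive union bound on collision probabilities does not apply. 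The paper explicitly notes that the fingerprinting approach of \cite{GM} breaks down below $p\sim n^{-1/3}$ precisely because the common-neighborhood subgraph becomes edgeless, and it abandons fingerprinting altogether.

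The paper's argument is global rather than edge-by-edge. It fixes an arbitrary graph $\tilde\Gamma$ on $[n]$ whose $1$-neighborhoods are center-preservingly isomorphic to those of $\Gamma$, with isomorphisms $f_i:N_\Gamma(i)\to N_{\tilde\Gamma}(i)$, and shows $\tilde\Gamma=\Gamma$ in four steps. Step~I: most vertex pairs $\{v,w\}$ are \emph{focused}, meaning $f_i(\{v,w\})$ is the same for almost all common neighbors $i$; the key observation is that many unfocused pairs would force many constraints of the form ``$\{v_1,w_1\}\in E(\Gamma)\Leftrightarrow\{v_2,w_2\}\in E(\Gamma)$'', which are simultaneously satisfiable only with probability $\exp(-\Omega(n^2p^3/\mathrm{polylog}\,n))$, and this beats the $\exp(O(np\log n))$ choices for the data of a single neighborhood precisely when $p\ge n^{-1/2}\mathrm{polylog}\,n$. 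Step~II upgrades focusing of pairs to a single permutation $\pi$ with $f_i(z)=\pi(z)$ for most $(i,z)$; Step~III shows $\pi$ is close to the identity by a similar constraint-counting argument; Step~IV is a bootstrap showing that ``close to identity'' forces exact equality. None of this certifies individual edges from local data; the proof is by contradiction on the global structure of any putative alternative assembly.
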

Thus, together with the aforementioned refinement of \cite[Theorem~3]{GM},
the theorem establishes $p=\tilde\Theta(n^{-1/2})$ as the transition point between 
reconstructability and non-reconstructability of unlabeled Erd\H os--R\'enyi graphs
from their $1$--neighborhoods. Here, we use the notation $\tilde \Theta(\cdot)$ to denote an
asymptotically equivalent quantity up to polylog multiples.

\medskip

Before considering the main challenges and ideas of the proof of Theorem~\ref{19471048710498},
let us make a simple ``reduction'' step
which will make the discussion more transparent.
%It is a classical result of Bollob\'as (see \cite{Bollobas}) that above the connectivity threshold (in the regime $\lim\limits_{n\to\infty}\min(np_n-\log n, n-np_n-\log n)=\infty$)
%the automorphism group of the $G(n,p_n)$ Erdos--R\'enyi graph on $\{1,2,\dots,n\}$ is trivial (consists of identity)
%with probability tending to one with $n$. 
%Further, in the regime $\lim\limits_{n\to\infty}(np_n-\log n)=\infty$, $p_n\leq 1/2$, it is %easy to check that
%with \HL{high} %overwhelming
%probability any two $1$--neighborhoods of the Erd\H os--R\'enyi graph on $\{1,2,\dots,n\}$
%have intersection strictly smaller than the size of a smallest $1$--neighborhood in the graph.
%The latter condition, in turn, implies, that by observing an unlabeled $1$--neighborhood in the Erd\H os--R\'enyi graph,
%we can uniquely identify its center with high probability \co{(see \cite[Lemma~7]{GM}). }
For every labeled graph $G$, denote by $V(G)$ its vertex set and by $E(G)$ its edge set.
Given two graphs $G,G'$ with $i\in V(G)\cap V(G')$,
we say that $G$ and $G'$ are {\it isomorphic with fixed point $i$} if
\begin{itemize}
\item The vertex sets $V(G)$ and $V(G')$ have the same size;
\item There is a bijection $f:V(G)\to V(G')$ such that $\{f(v),f(w)\}\in E(G')$ if and only if $\{v,w\}\in E(G)$,
and such that $f(i)=i$.
\end{itemize}
\HL{It is not difficult to construct
an example of two non-isomorphic labeled graphs on a same vertex set
such that their respective $1$--neighborhoods are isomorphic with fixed point
(see Figure~\ref{f:f1}).}

%\HL{ 
%Let us start with a thought process. 
%Let $\Gamma_n$ be a $G(n,p)$ graph which we view as a labelled representative of the %unlabelled Erd\H os--R\'enyi Graph stated in the theorem. For each realization of %$\Gamma_n$, we pick another graph $\tilde \Gamma_n$ on the same vertex set and the %$1$-neighborhoods of $\tilde \Gamma_n$ are isomorphic. (In other words, the $n$-multisets %of vertex $1$-neighborhoods of the equivalent classes of $\Gamma_n$ and $\tilde \Gamma_n$ %coincide.) 
%With high probability, for every vertex $i$, $N_{\Gamma_n, 1}(i)$ and $N_{\tilde \Gamma_n, %1}(i)$ are isomorphic with fixed point $i$ since the center of each $1$-neighborhood can be %identified even with all the labels been removed. 
%}
\HL{Given two labeled graphs $G,G'$
on a same vertex set, we will write $G=G'$ if the identity map on the vertex set
is an isomorphism between $G$ and $G'$.}
We prove the following
\begin{theor}\label{309847109847}
There are universal constants $C,c>0$ with the following property.
Let positive sequence $(p_n)_{n=1}^\infty$ satisfy $n^{-1/2}\log^C n\leq p_n\leq c$ for large $n$, and
for each $n$, let $\Gamma_n$ be the (labeled) Erd\H os--R\'enyi $G(n,p_n)$ graph on $\{1,2,\dots,n\}$.
\co{Then with probability $1-n^{-\omega(1)}$,
for every labeled graph $\tilde \Gamma_n$ on $\{1,2,\dots,n\}$ 
such that $1$-neighborhoods $N_{\Gamma_n,1}(i)$ and $N_{\tilde \Gamma_n,1}(i)$ are isomorphic with fixed point $i$ for all $i\in\{1,2,\dots,n\}$, we have $\Gamma_n = \tilde \Gamma_n$.}
%For every
%graph $\tilde \Gamma_n$  \HL{on $\{1,2,\dots,n\}$} such that $1$-neighborhoods $N_{\Gamma_n,1}(i)$ and $N_{\tilde \Gamma_n,1}(i)$ are isomorphic with fixed point $i$ for all $i\in\{1,2,\dots,n\}$, we have $\Gamma_n = \tilde \Gamma_n$.}
% Further, for each $n$ let $\tilde\Gamma_n$ be a random labeled graph on $\{1,2,\dots,n\}$ measurable
% with respect to the field $\sigma(\Gamma_n)$, and such that for every $1\leq i\leq n$, the $1$--neighborhoods
% $N_{\Gamma_n,1}(i)$ and $N_{\tilde \Gamma_n,1}(i)$ are isomorphic with fixed point $i$ everywhere on the probability space.
% Then $\Gamma_n=\tilde\Gamma_n$ with probability $1-n^{-\omega(1)}$.
\end{theor}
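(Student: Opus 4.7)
The plan is to adapt the fingerprinting technique of \cite{GM} to the sparser regime $p_n \gtrsim n^{-1/2}\log^C n$. The fingerprinting idea associates to each edge $\{u,v\}$ of $\Gamma_n$ the induced subgraph on the set of common neighbors $N(u)\cap N(v)$, together with any additional structural information that is simultaneously visible inside $N_{\Gamma_n,1}(u)$ and inside $N_{\Gamma_n,1}(v)$. The hope is that, with very high probability, these edge-fingerprints are pairwise distinct and cross-identifiable, so that any candidate $\tilde\Gamma_n$ whose 1-neighborhoods are isomorphic (with fixed point) to those of $\Gamma_n$ is forced to satisfy $\tilde\Gamma_n = \Gamma_n$.

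The proof proceeds in three main steps. First, control the local geometry: show that with probability $1-n^{-\omega(1)}$ every vertex has degree $(1\pm o(1))np$, every pair of adjacent vertices has $(1\pm o(1))np^2 \gtrsim \log^{2C}n$ common neighbors, and the higher-order statistics needed later also concentrate; Chernoff-type bounds combined with union bounds over $O(n^2)$ events suffice. Second, design an edge-invariant $F(u,v)$ satisfying (a) it is computable from each of $N_{\Gamma_n,1}(u)$ and $N_{\Gamma_n,1}(v)$ separately, (b) it is preserved under any isomorphism of a 1-neighborhood with the appropriate fixed point, and (c) the fingerprints $\{F(u,v) : \{u,v\}\in E(\Gamma_n)\}$ are pairwise distinct with probability $1-n^{-\omega(1)}$. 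Third, conclude the theorem: for any candidate $\tilde\Gamma_n$, the isomorphism $\pi_u\colon N_{\Gamma_n,1}(u)\to N_{\tilde\Gamma_n,1}(u)$ sends each neighbor $v$ of $u$ to a vertex $v'$ whose $\tilde\Gamma_n$-fingerprint with respect to $u$ equals $F(u,v)$; combining this with the analogous constraint from $\pi_v$ and invoking the uniqueness from (c), deduce $v'=v$, whence $N_{\Gamma_n}(u) = N_{\tilde\Gamma_n}(u)$ for every $u$ and therefore $\Gamma_n = \tilde\Gamma_n$.

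The main obstacle is step two in the sparse regime $p_n \sim n^{-1/2}\log^C n$. Here the na\"ive fingerprint $\Gamma_n[N(u)\cap N(v)]$ has about $\log^{2C} n$ vertices but expected edge count only $\sim n^2 p^5 = n^{-1/2}\log^{5C}n = o(1)$, so it is typically the empty graph and carries no distinguishing information. The fingerprint must therefore be enriched by finer invariants---for instance, the multiset of degrees $\deg_{N(u)}(w)$ and $\deg_{N(v)}(w)$ for common neighbors $w\in N(u)\cap N(v)$, or more refined invariants describing how $N(u)\cap N(v)$ connects to $N(u)\setminus N(v)$ and $N(v)\setminus N(u)$. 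Proving that such an invariant is pairwise distinct with probability $1-n^{-\omega(1)}$---strong enough for a union bound over all $O(n^2)$ edges---while simultaneously preserving invariance under the 1-neighborhood isomorphisms at both endpoints, is where essentially all of the technical work should lie.
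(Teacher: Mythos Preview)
Your proposal has a genuine gap in step two, and it is more than a technical difficulty: the specific enrichments you suggest cannot satisfy your own requirement (b).

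The key constraint on a fingerprint $F(u,v)$ is that it be computable from $N_{\Gamma_n,1}(u)$ alone \emph{and} from $N_{\Gamma_n,1}(v)$ alone, up to the respective neighborhood isomorphisms. The only information about the edge $\{u,v\}$ that is visible from both sides is the induced subgraph on $N(u)\cap N(v)$ together with its cardinality; this is exactly what \cite{GM} use. Your proposed enrichment---the degrees $\deg_{N(u)}(w)$ for $w\in N(u)\cap N(v)$---is visible from $u$'s neighborhood but \emph{not} from $v$'s, since $N_{\Gamma_n,1}(v)$ does not contain the vertices of $N(u)\setminus N(v)$ to which $w$ may be joined. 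Symmetrically, $\deg_{N(v)}(w)$ is invisible from $u$'s side. So neither the pair nor the multiset of such degrees can serve as a two-sided fingerprint. More generally, any invariant that is genuinely visible from both $N(u)$ and $N(v)$ reduces to a function of the induced graph on $N(u)\cap N(v)$, which in the regime $p_n\sim n^{-1/2}\log^C n$ is almost surely edgeless and hence carries only its vertex count---a single integer concentrated around $np^2$, far too coarse to distinguish $\Theta(n^2p)$ edges.

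The paper explicitly notes this obstruction and takes a \emph{global} route that does not attempt to identify edges one at a time. It fixes a single candidate $\tilde\Gamma$ with center-preserving isomorphisms $f_i:N_\Gamma(i)\to N_{\tilde\Gamma}(i)$ and argues in four stages: (I) most unordered pairs $\{v,w\}$ are mapped to a common pair of $\tilde\Gamma$ across almost all neighborhoods containing them (``focused pairs''), via a decoupling/counting argument that converts many unfocused pairs into many edge constraints on $\Gamma$; (II) this is upgraded to the existence of a permutation $\pi$ with $f_i(z)=\pi(z)$ for most $(i,z)$; (III) $\pi$ is shown to be close to the identity in Hamming distance; (IV) a bootstrap forces exact equality $\Gamma=\tilde\Gamma$. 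The essential mechanism is that any substantial disagreement between $\Gamma$ and $\tilde\Gamma$ imposes $\Omega(n^2p^2/\mathrm{polylog}\,n)$ constraints of the form ``$\{v,w\}\in E(\Gamma)\Leftrightarrow\{v',w'\}\in E(\Gamma)$'' per neighborhood, whose simultaneous satisfaction has probability $\exp(-\Omega(n^2p^3/\mathrm{polylog}\,n))$, which beats the $\exp(O(np\log n))$ choices of local isomorphisms precisely when $p\ge n^{-1/2}\log^C n$. This global counting is what replaces the edge-by-edge fingerprint uniqueness that fails locally.
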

It can be easily verified that Theorem~\ref{309847109847} implies
Theorem~\ref{19471048710498}; for completeness, we provide an argument
at the end of Section~\ref{s:step4}.
In fact, in the opposite direction,
Theorem~\ref{19471048710498} implies Theorem~\ref{309847109847}
via a simple union bound estimate.
We only highlight the proof idea.
An application of Theorem~\ref{19471048710498}
yields that with probability $1-n^{-\omega(1)}$,
every graph $\tilde \Gamma_n$
satisfying the conditions on the $1$--neighborhoods from Theorem~\ref{309847109847},
must be isomorphic to $\Gamma_n$. On the other hand,
it can be verified via the union bound argument that with probability at least $1-n^{-\omega(1)}$, any pair of $1$-neighborhoods of $\Gamma_n$ are not isomorphic.
Thus, necessarily
$\tilde \Gamma_n=\Gamma_n$ with probability $1-n^{-\omega(1)}$.
% In view of the above remarks, Theorem~\ref{309847109847} immediately implies
%Theorem~\ref{19471048710498}.
%In view of the above remark,  Theorem~\ref{309847109847} immediately implies
%Theorem~\ref{19471048710498}. 
%\HL{
%At a first glance, Theorem~\ref{309847109847} seems to stronger since %Theorem~\ref{19471048710498} only implies that with high probability, $\Gamma_n = \pi( %\tilde \Gamma_n)$ for a permutation  $\pi$ on $[n]$. However, it is not difficult to verify %via a union bound argument that with probability at least $1-n^{-\omega(1)}$, any two %$1$-neighbors of $\Gamma_n$ are not isomorphic, which implies $\pi$ can only be the %identity map, and thus $\Gamma_n = \tilde \Gamma_n$.   
%}

\bigskip

The proof of the reconstructability of the $G(n,p_n)$  Erd\H os--R\'enyi graph in the regime $n^{-\varepsilon}\geq p_n\geq n^{-1/3+\varepsilon}$
in the work \cite{GM} is based on the observation that with a high probability, given two adjacent vertices $v,w$ of the graph,
the intersection of their $1$--neighborhoods is a subgraph with a ``rich enough'' edge set. This richness allows
to distinguish adjacent pairs from non-adjacent pairs using a polynomial time algorithm\footnote{\co{Although the computational complexity of the reconstruction procedure is not explicitly analyzed in \cite{GM}, the polynomial time complexity can be verified immediately by applying the main result from \cite{Czajka}. Indeed, it is easy to check that the induced subgraph $G_{v,w}$
of $G(n,p_n)$ on the set of common neighbors of any fixed pair of vertices $v,w$
has a vertex set $V(G_{v,w})$
of size $\Theta(p_n^2 n)$ with probability $1-n^{-\omega(1)}$. On the other hand,
conditioned on $|V(G_{v,w})|=m$ for any $m=\Theta(p_n^2 n)$,
$G_{v,w}$ is [conditionally] Erd\H os--R\'enyi with parameters $m,p_n$ which,
in the regime $p_n\geq n^{-1/3+\varepsilon}$, satisfy $p_n=\omega(\frac{\log^4 m}{m})$. According to the main result of \cite{Czajka}, this implies that with probability $1-n^{-\omega(1)}$ one can define a {\it canonical labeling} of vertices of $G_{v,w}$ in polynomial time which, in turn, enables a polynomial time construction of an isomorphism
between two unlabeled copies of $G_{v,w}$. By ``injecting'' this argument into \cite{GM}, the polynomial time reconstruction procedure is obtained in the considered range of $p_n$.}}.
Under a weaker assumption $p_n\geq n^{-1/2+\varepsilon}$, the $1$--neighborhoods of any two adjacent vertices
still contain many common vertices with a high probability.
It is not difficult to check, however, that in the regime when $p_n$ is much smaller than $n^{-1/3}$,
the edge set of the induced graph on those common vertices is typically empty for most adjacent pairs,
and thus does not give a good test for adjacency.
The approach we present here is completely different from \cite{GM} and is ultimately based
on a careful {\it global} analysis of $3$--cycles in graphs. The argument is a multistep process,
where each next step is designed to strengthen the information available about the graphs
$\tilde\Gamma_n$ satisfying the assumptions on $1$--neighborhoods from Theorem~\ref{309847109847}.

Next, we give an outline of the argument. Since we are focused on reconstruction from $1$--neighborhoods,
in what follows we will use notation $N_{G}(i)$ in place of $N_{G,1}(i)$.
\HL{Unless explicitly specified otherwise, we will work with graphs on $n$ vertices labeled $\{1,2,\dots,n\}$.
Let $\Gamma$ be a $G(n,p)$
Erd\H os--R\'enyi graph on $\{1,2,\dots,n\}$, where we assume that $n^{-1/2}\log^C n
\leq p\leq \log^{-C}n$ for a large universal constant $C>0$.
Next, we construct an auxiliary random graph $\tilde\Gamma$ on the vertex set $\{1,2,\dots,n\}$ as follows.
For every realization of $\Gamma$, we define $\tilde\Gamma$
so that $N_{\Gamma}(i)$ and $N_{\tilde\Gamma}(i)$ are isomorphic with fixed point $i$
for every $1\leq i\leq n$, and, whenever possible,
take $\tilde\Gamma$ not equal to $\Gamma$.
Thus, $\tilde\Gamma$, viewed as a graph-valued random variable,
is a function of $\Gamma$.
Further, with this construction, $\tilde\Gamma$
represents a ``most distant'' from $\Gamma$ random graph
subject to the condition that the respective $1$--neighborhoods are isomorphic with fixed point everywhere on the probability space.
Note that with this definition of $\tilde\Gamma$,
proving Theorem~\ref{309847109847} amounts to checking that
$$
\Prob\big\{\Gamma=\tilde\Gamma\big\}=1-n^{-\omega(1)}.
$$
For the rest of the paper, the notation ``$\tilde\Gamma$'' refers to
the random graph constructed above}\footnote{In our view, working with the single ``representative'' $\tilde\Gamma$
rather than considering the entire family of possible assemblies
of $1$--neighborhoods of $\Gamma$ into graphs, makes the proofs
somewhat lighter.}.
Further, denote by $f_1,f_2,\dots,f_n$ the [random] center-preserving isomorphisms between the respective neighborhoods of $\Gamma$ and $\tilde\Gamma$ (which, viewed as random variables, are assumed to be measurable with respect to $\Gamma$).

We will mostly work with the graph $\Gamma$ conditioned on certain event of probability close to one
which encapsulates ``typical'' properties of  Erd\H os--R\'enyi graphs of the given density, such as upper and lower bounds
on the vertex degrees, on the number of common neighbors for a pair of vertices; as well as certain
graph expansion properties. Those conditions are stated and proved in Section~\ref{s:prelim} of the paper.

\bigskip

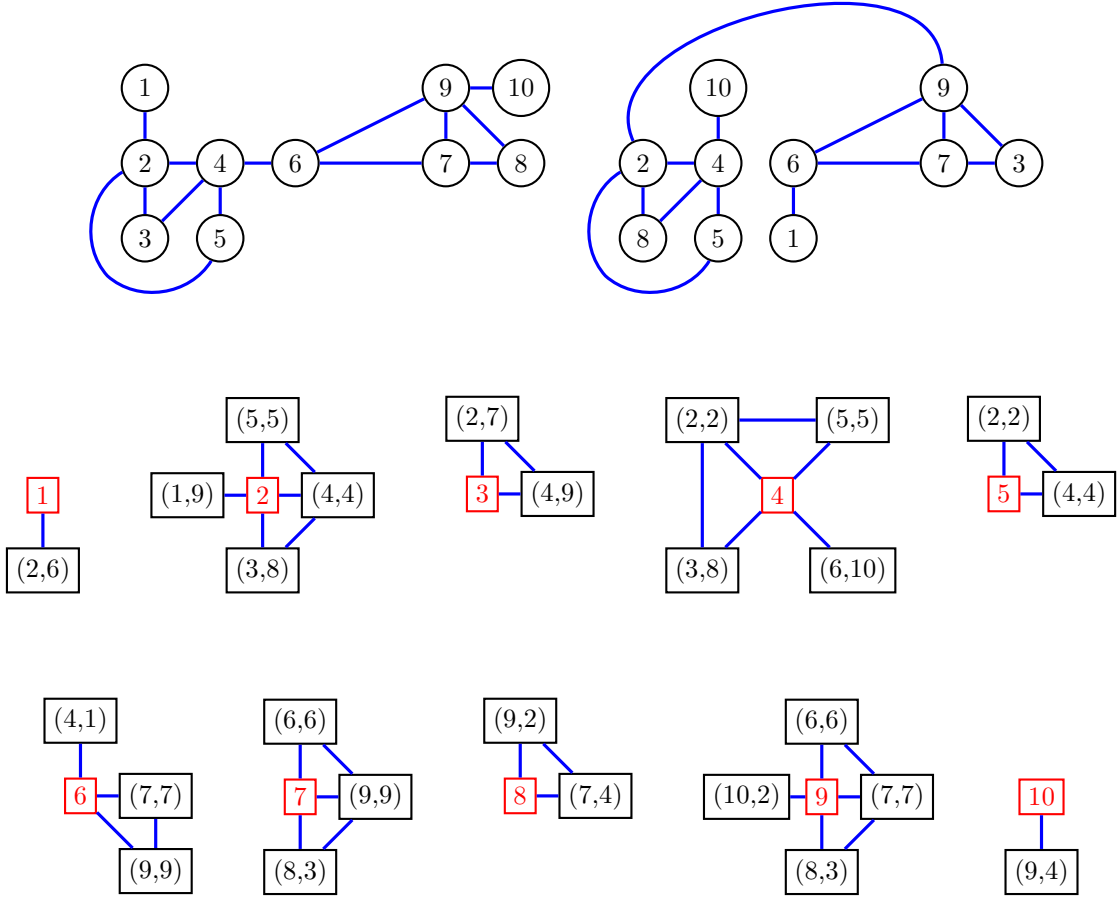
\begin{figure}[h]\label{f:f1}
\caption{An example of two non-isomorphic graphs -- $\Gamma$ (on the top left)
and $\tilde \Gamma$ (top right) whose respective $1$--neighborhoods are isomorphic.
We represent each pair of neighborhoods $N_{\Gamma}(i),N_{\tilde\Gamma}(i)$, $1\leq i\leq 10$,
by placing one on top of the other and putting {\it double labels} on vertices in the neighborhood.
The first number in each label is the vertex' index in graph $\Gamma$, and the second --- the index in $\tilde\Gamma$.
Centers of the neighborhoods are marked in red.}

\centering  

\subfigure
{
\begin{tikzpicture}[every node/.style={circle,thick,draw}]
    \node (1) at (-2,0) {1};
    \node (2) at (-2,-1) {2};
    \node (3) at (-2,-2) {3};
    \node (4) at (-1,-1) {4};
    \node (5) at (-1,-2) {5};
    \node (6) at (0,-1) {6};
    \node (7) at (2,-1) {7};
    \node (8) at (3,-1) {8};
    \node (9) at (2,0) {9};
    \node (10) at (3,0) {10};
    \draw[blue, very thick] [-] (1) -- (2);
    \draw[blue, very thick] [-] (2) -- (3);
    \draw[blue, very thick] [-] (3) -- (4);
    \draw[blue, very thick] [-] (2) -- (4);
    \draw[blue, very thick] [-] (4) -- (5);
    \draw[blue, very thick] [-] (4) -- (6);
    \draw[blue, very thick] [-] (6) -- (7);
    \draw[blue, very thick] [-] (6) -- (9);
    \draw[blue, very thick] [-] (7) -- (9);
    \draw[blue, very thick] [-] (7) -- (8);
    \draw[blue, very thick] [-] (8) -- (9);
    \draw[blue, very thick] [-] (9) -- (10);
    \draw[blue, very thick] (5) to [bend left=50] (-2.5,-2.5) to [bend left=50] (2);
\end{tikzpicture}
}
\subfigure
{  
\begin{tikzpicture}
\begin{scope}[every node/.style={circle,thick,draw}]
    \node (1) at (0,-2) {1};
    \node (2) at (-2,-1) {2};
    \node (8) at (-2,-2) {8};
    \node (4) at (-1,-1) {4};
    \node (5) at (-1,-2) {5};
    \node (6) at (0,-1) {6};
    \node (7) at (2,-1) {7};
    \node (3) at (3,-1) {3};
    \node (9) at (2,0) {9};
    \node (10) at (-1,0) {10};
    \draw[blue, very thick] [-] (1) -- (6);
    \draw[blue, very thick] [-] (2) -- (8);
    \draw[blue, very thick] [-] (8) -- (4);
    \draw[blue, very thick] [-] (2) -- (4);
    \draw[blue, very thick] [-] (4) -- (5);
    \draw[blue, very thick] [-] (6) -- (7);
    \draw[blue, very thick] [-] (6) -- (9);
    \draw[blue, very thick] [-] (7) -- (9);
    \draw[blue, very thick] [-] (7) -- (3);
    \draw[blue, very thick] [-] (3) -- (9);
    \draw[blue, very thick] [-] (4) -- (10);
    \draw[blue, very thick] (2) to [bend left=100] (9);
    \draw[blue, very thick] (5) to [bend left=50] (-2.5,-2.5) to [bend left=50] (2);
\end{scope}
\end{tikzpicture}
}

\vspace{1cm}

\subfigure
{
\begin{tikzpicture}
\begin{scope}[every node/.style={rectangle,thick,draw}]
    \node[red] (1) at (0,0) {1};
    \node (2) at (0,-1) {(2,6)};
    \draw[blue, very thick] [-] (1) -- (2);
\end{scope}
\end{tikzpicture}
}\qquad
\subfigure
{
\begin{tikzpicture}
\begin{scope}[every node/.style={rectangle,thick,draw}]
    \node[red] (2) at (0,0) {2};
    \node (1) at (-1,0) {(1,9)};
    \node (3) at (0,-1) {(3,8)};
    \node (4) at (1,0) {(4,4)};
    \node (5) at (0,1) {(5,5)};
    \draw[blue, very thick] [-] (4) -- (3);
    \draw[blue, very thick] [-] (5) -- (4);
    \draw[blue, very thick] [-] (2) -- (1);
    \draw[blue, very thick] [-] (2) -- (3);
    \draw[blue, very thick] [-] (2) -- (4);
    \draw[blue, very thick] [-] (2) -- (5);
\end{scope}
\end{tikzpicture}
}\qquad
\subfigure
{
\begin{tikzpicture}
\begin{scope}[every node/.style={rectangle,thick,draw}]
    \draw[white, very thick] [-] (0,0) -- (0,-1.3);
    \node[red] (3) at (0,0) {3};
    \node (2) at (0,1) {(2,7)};
    \node (4) at (1,0) {(4,9)};
    \draw[blue, very thick] [-] (2) -- (4);
    \draw[blue, very thick] [-] (3) -- (2);
    \draw[blue, very thick] [-] (3) -- (4);
\end{scope}
\end{tikzpicture}
}\qquad
\subfigure
{
\begin{tikzpicture}
\begin{scope}[every node/.style={rectangle,thick,draw}]
    \node[red] (4) at (0,0) {4};
    \node (2) at (-1,1) {(2,2)};
    \node (3) at (-1,-1) {(3,8)};
    \node (5) at (1,1) {(5,5)};
    \node (6) at (1,-1) {(6,10)};
    \draw[blue, very thick] [-] (4) -- (2);
    \draw[blue, very thick] [-] (4) -- (3);
    \draw[blue, very thick] [-] (4) -- (5);
    \draw[blue, very thick] [-] (4) -- (6);
    \draw[blue, very thick] [-] (2) -- (3);
    \draw[blue, very thick] [-] (2) -- (5);
\end{scope}
\end{tikzpicture}
}\qquad
\subfigure
{
\begin{tikzpicture}
\begin{scope}[every node/.style={rectangle,thick,draw}]
    \draw[white, very thick] [-] (0,0) -- (0,-1.3);
    \node[red] (5) at (0,0) {5};
    \node (2) at (0,1) {(2,2)};
    \node (4) at (1,0) {(4,4)};
    \draw[blue, very thick] [-] (2) -- (4);
    \draw[blue, very thick] [-] (5) -- (2);
    \draw[blue, very thick] [-] (5) -- (4);
\end{scope}
\end{tikzpicture}
}

\vspace{1cm}

\subfigure
{
\begin{tikzpicture}
\begin{scope}[every node/.style={rectangle,thick,draw}]
    \draw[white, very thick] [-] (0,0) -- (0,-1.3);
    \node[red] (6) at (0,0) {6};
    \node (4) at (0,1) {(4,1)};
    \node (7) at (1,0) {(7,7)};
    \node (9) at (1,-1) {(9,9)};
    \draw[blue, very thick] [-] (6) -- (4);
    \draw[blue, very thick] [-] (6) -- (7);
    \draw[blue, very thick] [-] (6) -- (9);
    \draw[blue, very thick] [-] (7) -- (9);
\end{scope}
\end{tikzpicture}
}\qquad
\subfigure
{
\begin{tikzpicture}
\begin{scope}[every node/.style={rectangle,thick,draw}]
    \node[red] (7) at (0,0) {7};
    \node (6) at (0,1) {(6,6)};
    \node (9) at (1,0) {(9,9)};
    \node (8) at (0,-1) {(8,3)};
    \draw[blue, very thick] [-] (7) -- (6);
    \draw[blue, very thick] [-] (7) -- (9);
    \draw[blue, very thick] [-] (7) -- (8);
    \draw[blue, very thick] [-] (9) -- (8);
    \draw[blue, very thick] [-] (9) -- (6);
\end{scope}
\end{tikzpicture}
}\qquad
\subfigure
{
\begin{tikzpicture}
\begin{scope}[every node/.style={rectangle,thick,draw}]
    \draw[white, very thick] [-] (0,0) -- (0,-1.3);
    \node[red] (8) at (0,0) {8};
    \node (9) at (0,1) {(9,2)};
    \node (7) at (1,0) {(7,4)};
    \draw[blue, very thick] [-] (8) -- (9);
    \draw[blue, very thick] [-] (8) -- (7);
    \draw[blue, very thick] [-] (9) -- (7);
\end{scope}
\end{tikzpicture}
}\qquad
\subfigure
{
\begin{tikzpicture}
\begin{scope}[every node/.style={rectangle,thick,draw}]
    \node[red] (9) at (0,0) {9};
    \node (6) at (0,1) {(6,6)};
    \node (7) at (1,0) {(7,7)};
    \node (8) at (0,-1) {(8,3)};
    \node (10) at (-1,0) {(10,2)};
    \draw[blue, very thick] [-] (9) -- (6);
    \draw[blue, very thick] [-] (9) -- (7);
    \draw[blue, very thick] [-] (9) -- (8);
    \draw[blue, very thick] [-] (9) -- (10);
    \draw[blue, very thick] [-] (7) -- (6);
    \draw[blue, very thick] [-] (7) -- (8);
\end{scope}
\end{tikzpicture}
}\qquad
\subfigure
{
\begin{tikzpicture}
\begin{scope}[every node/.style={rectangle,thick,draw}]
    \node[red] (10) at (0,0) {10};
    \node (9) at (0,-1) {(9,4)};
    \draw[blue, very thick] [-] (10) -- (9);
\end{scope}
\end{tikzpicture}
}

\end{figure}

\HL{
The high-level idea of the proof is to first show that with high probability the identity map $V(\Gamma)\to V(\tilde \Gamma)$ is ``almost'' a graph isomorphism between $\Gamma$ and $\tilde \Gamma$ in the sense that $f_i(j)=j$ for the \maj{} of pairs $(i,j)$ with $\{i,j\}\in E(\Gamma)$ (here, ``\maj'' refers to at least
$(1-c)$--fraction or more, for some small constant $c>0$), and then,
assuming the two graphs are ``almost isomorphic'', consider a bootstrap argument to show that $\Gamma=\tilde \Gamma$ with high probability. 

Steps I-III of the proof are devoted to that first part of the argument and Step IV deals with the bootstrap procedure.
\begin{itemize}
\item In Step I, we show that with high probability for a \maj{} of unordered pairs of vertices $\{v,w\}$, we have $\{v,w\}=\{f_i(v),f_i(w)\}$ for most choices
of $i$ in the common neighborhood of $v,w$. To illustrate our motivation for working with images of pairs of vertices as the initial step, suppose that $\{f_i(v),f_i(w)\}=\{f_{i'}(v'),f_{i'}(w')\}$ for two triples of vertices $\{i,v,w\}$
and $\{i',v',w'\}$ of $\Gamma$ with $\{v,w\}\neq \{v',w'\}$. Then necessarily
$\{v,w\}$ and $\{v',w'\}$ must be either both edges or both non-edges of $\Gamma$, which induces a constraint on the graph structure. Many vertex triples
of this type would induce many constraints, and that can be shown to occur with only a small probability.
\item In Step II, we use the result of Step I to show the existence of a permutation $\pi$ of $[n]$ such that for a \maj{} of vertices $v$, $f_i(v)=\pi(v)$ for most vertices $i$ in the neighborhood of $v$. 
\item We show that the permutation $\pi$ from Step II can be chosen to be the identity in Step III.
\end{itemize}
Now, we give a more detailed outline of structure of the four steps of the proof.
We will not specify the choice and the interrelation between the constants
to clarify the exposition.}

\bigskip

{\bf Step I.} We show that with \HL{high} probability most vertex pairs $\{v,w\}$ of $\Gamma$
are mapped into a same pair $\{\tilde v,\tilde w\}$ of vertices of $\tilde\Gamma$.
More precisely, conditioned on a typical realization of $\Gamma$,
we will say that a pair of vertices $\{v,w\}$ of $\Gamma$
is {\it focused} if there is a pair of vertices $\{\tilde v,\tilde w\}$ of $\tilde\Gamma$
such that for a {\it vast majority} of $1$--neighborhoods $N_\Gamma(i)$ of $\Gamma$ containing $\{v,w\}$,
we have $\{f_i(v),f_i(w)\}=\{\tilde v,\tilde w\}$
(we note here that for a typical realization of $\Gamma$, every pair of vertices of $\Gamma$
is contained in $(1\pm o(1))p^2n$ neighborhoods).
We then show that with probability close to one
most of the pairs of vertices of $\Gamma$ are focused.

The idea of the proof can be described as follows.
Consider a realization of $\Gamma$ such that many of pairs of vertices of $\Gamma$ are not focused.
For every non-focused pair $\{v,w\}$,
there are many pairs of indices $i_1,i_2\in[n]$ such that
$v,w$ are adjacent to both $i_1$ and $i_2$ in $\Gamma$, and
$\{f_{i_1}(v),f_{i_1}(w)\}
\neq \{f_{i_2}(v),f_{i_2}(w)\}$. A counting argument then implies that there are many $2$--tuples
of pairs of vertices $(\{v_1,w_1\},\{v_2,w_2\})$, such that
$\{v_1,w_1\}\neq \{v_2,w_2\}$, and for some $i_1,i_2\in[n]$, $\{f_{i_1}(v_1),f_{i_1}(w_1)\}
=\{f_{i_2}(v_2),f_{i_2}(w_2)\}$. But the last condition necessarily means that either both
pairs $\{v_1,w_1\},\{v_2,w_2\}$ are edges in $\Gamma$ or neither of them is.
That way, the condition that many vertex pairs are not focused introduces multiple
constraints (``dependencies'') on the edges of $\Gamma$. We want to claim that fulfilling those constraints
simultaneously is very unlikely. The obvious problem is that the collection of $2$--tuples
$(\{v_1,w_1\},\{v_2,w_2\})$ representing those constraints depends on the realization of $\Gamma$,
and we need to devise a decoupling argument to be able to use the randomness of $\Gamma$.
The assumption $p\geq n^{-1/2}\log^C n$ turns out crucial at this stage. This can be illustrated as follows.
Consider the set of typical realizations of the isomorphism $f_1$
(where by a ``realization'' we mean
both the domain of the mapping i.e the set of vertices adjacent to $1$ in $\Gamma$,
and the mapping itself). Since we are working with a typical graph,
we will assume that the number of vertices adjacent to $1$ is of order $\Theta(np)$.
Then the set of typical realizations of $f_1$ has size of order at most $\exp(O(np\log n))$.
On the other hand, on the event when a significant proportion of pairs of vertices
of $\Gamma$ are not focused, it turns out that there are on average $\Omega(n^2p^2/({\rm polylog}(n)))$ constraints
(``dependencies'') for pairs of vertices per neighborhood. For every two pairs $\{v_1,w_1\}$
and $\{v_2,w_2\}$ of distinct vertices of $\Gamma$, the probability of the event
$$
\big\{\mbox{Both $\{v_1,w_1\}$ and $\{v_2,w_2\}$ are edges of $\Gamma$}\big\}
\cup \big\{\mbox{Neither of $\{v_1,w_1\}$ and $\{v_2,w_2\}$ is an edge of $\Gamma$}\big\}
$$
is of order $1-\Theta(p)$.
Therefore, $\Omega(n^2p^2/({\rm polylog}(n)))$ distinct constraints can be satisfied with 
probability at most $\exp(-\Omega(n^2p^3/({\rm polylog}(n))))$.
It can be easily checked that for $p\geq n^{-1/2}\log^C n$ for a sufficiently large constant $C$, we have
$$
\exp(-\Omega(n^2p^3/({\rm polylog}(n))))\cdot \exp(O(np\log n))=o(1).
$$
That is, the probability that the constraints are satisfied simultaneously beats
the number of typical realizations of $f_1$. This observation forms the basis of our decoupling argument
which is rigorously carried out in Section~\ref{s:step1}.

\bigskip

{\bf Step II.} At the first step of the proof, we have shown that almost every pair of vertices
$\{v,w\}$ of $\Gamma$ is ``essentially'' mapped to a same pair of vertices of $\tilde\Gamma$.
The goal of the second step is to extend this property to mappings of vertices rather than vertex pairs.
More precisely, we want to show that with probability close to one there is a permutation $\pi$
such that for a vast majority of vertices $v$ of $\Gamma$, we have $f_i(v)=\pi(v)$ for almost all
neighborhoods $N_\Gamma(i)$ containing $v$.
The idea can be described as follows.
Assume that there is a triple
of vertices $\{v,w,z\}$ of $\Gamma$ and two indices $i_1\neq i_2$ such that $v,w,z$
are adjacent to both $i_1$ and $i_2$, and both pairs $\{v,z\}$ and $\{w,z\}$
are mapped to their focuses by both mappings $f_{i_1}$ and $f_{i_2}$.
Assume that the focus of $\{v,z\}$ is $\{\tilde a,\tilde b\}$,
and the focus of $\{w,z\}$ is $\{\tilde b,\tilde c\}$ (note that since the pairs $\{v,z\}$ and  $\{w,z\}$ 
share a common vertex in $\Gamma$, their focuses must share a common vertex in $\tilde\Gamma$).
But then, obviously, $f_{i_1}(z)=f_{i_2}(z)=\tilde b$, i.e $z$ must be mapped to a same vertex both by $f_{i_1}$
and $f_{i_2}$. An important part of the second step is to actually show that
the existence of many focused pairs implies existence of many triples with the aforementioned
properties.

\bigskip

{\bf Step III.}
We want to show that the permutation $\pi$ found at Step II is actually
close to the identity (in the Hamming metric) with a very large probability.
More precisely, we show that for certain (small) parameter $\varepsilon>0$, the event
\begin{align*}
\Event_3(\varepsilon):=
\bigg\{\sum_{i\in[n]}|\{z\snb{\Gamma}i\,%\in V(N_\Gamma(i))\setminus\{i\}
:\;f_i(z)=z
\}|\geq (1-\varepsilon)n^2p\bigg\}.
\end{align*}
has probability $1-n^{-\omega(1)}$.
Here, the idea is similar to the one used in Step I, although the details are quite different.
The crucial point is that permutations far from the identity introduce a relatively large number of constraints
on the edges of $\Gamma$, which can only be satisfied simultaneously with a very small probability.
To see how the argument actually works, consider a pair $\{v,w\}$ of vertices of $\Gamma$.
Assume that for a realization of $\Gamma$ that we have fixed, the vertices $v$ and $w$
are adjacent in $\Gamma$. Note that necessarily $\{v,\pi(w)\}$ is an edge in $\tilde\Gamma$,
whence $v$ is a vertex in the neighborhood $N_{\tilde\Gamma}(\pi(w))$.
If we also suppose that $f_{\pi(w)}^{-1}(v)=\pi^{-1}(v)$ (after all, Step II asserts that
on most neighborhoods the isomorphisms $f_i$ essentially agree with $\pi(\cdot)$, up to a small number of exceptional points),
then necessarily $\{\pi^{-1}(v),\pi(w)\}$ is an edge in $\Gamma$.
Thus, for a large number of pairs $\{v,w\}$ the condition that $\{v,w\}$ is an edge in $\Gamma$
implies that $\{\pi^{-1}(v),\pi(w)\}$ is an edge as well. When $\pi$ acts as the identity on $\{v,w\}$,
this condition does not add any additional constraints on the edges of $\Gamma$ since in that case
$\{v,w\}=\{\pi^{-1}(v),\pi(w)\}$. On the other hand, if $\pi$ is far from the identity
then for a significant number of pairs we will have $\{\pi^{-1}(v),\pi(w)\}\neq \{v,w\}$,
and so the condition ``if $\{v,w\}$ is an edge then $\{\pi^{-1}(v),\pi(w)\}$ is an edge''
becomes a constraint. A lower bound on the Hamming distance between $\pi$ and the identity
then allows us to collect a sufficient number of constraints on $\Gamma$ to be able to carry out
an argument similar to the one in Step I.

\bigskip

{\bf Step IV.}
In this last step, we show that the graphs $\Gamma$ and $\tilde \Gamma$ actually coincide with high probability,
using the information obtained in Step III.
Let  $ \cal{P} := \{ (v,w) \in [n]^{\times 2} \,:\, \{v,w\} \in E(\Gamma)\}$ and 
$ \tilde{\cal{P}} := \{ (\tilde{v},\tilde{w}) \in [n]^{\times 2} \,:\, \{\tilde{v},\tilde{w}\} \in E(\tilde{\Gamma})\}$
be the ``ordered'' edges of $\Gamma$ and $\tilde\Gamma$.
Further, let
$ \cal{M}\subset \cal{P}$ be the subset of {\it matched ordered edges} defined in the following way:
$$\cal{M}: = \{ (v,w) \in \cal{P} \,:\, f_v(w)=w\}.$$
Notice that whenever $ \cal{M} = \cal{P}$, we necessarily have $\Gamma = \tilde{\Gamma}$.
Further, using Step III, the set
${\cal{M}}^c = \cal{P}\backslash \cal{M}$ has cardinality $O(\varepsilon n^2p)=O(\varepsilon |\cal{P}|)$
with high probability. 
In this final step of the proof, we want to argue that the condition that $|\cal{M}^c| = o(|\cal P|)$ w.h.p,
essentially obtained in Step III, forces the equality $\cal{M}=\cal P$ with high probability (this
can be viewed as a bootstrapping argument). 

Let us examine a ``typical'' pair $(v,w) \in \cal P$, with $\tilde w:=f_v(w)$ (so that $(v, \tilde w) \in \tilde{\cal P}$),
and the common neighbors of $v$ and $w$ in $N_{\Gamma}(v)$
and of $v$ and $\tilde w$ in $N_{\tilde\Gamma}(v)$.
Since $f_v: N_{\Gamma}(v) \mapsto N_{\tilde \Gamma}(v)$ is a graph isomorphism,
$f_v$
induces a bijection between the common neighbors of $v$ and $w$ in $N_{\Gamma}(v)$ and of $v$ and $\tilde w$ in $N_{\tilde\Gamma}(v)$.
Now, with the assumption that $|\cal{M}^c| = o(| \cal P|)$, we expect that a vast majority
of the common neighbors of $v$ and $w$ in $\Gamma$ are also common neighbors of $v$ and $\tilde{w}$ in $\tilde \Gamma$
(that is, $f_v(\mbox{``common neigh. of $v,w$ in $\Gamma$''})\approx \mbox{``common neigh. of $v,w$ in $\Gamma$''}$). 
Without giving the precise definition at this moment, we introduce a subset $\cal{V}$ of the pairs $(v,w)$ in $\cal{P}$ having such property
(so that a vast majority of pairs from $\cal{P}$ are in $\cal{V}$).
Further, if we consider $( \tilde w ,\, f_{\tilde w}^{-1}(v)) \in \cal P$, the same heuristic argument implies that there is a significant overlap between the sets of the common neighbors of $\tilde w$ and $f_{\tilde w}^{-1}(v)$
in $\Gamma$ and of $\tilde{w}$ and $v$ in $\tilde \Gamma$.
The consequence is that now we have a nontrivial overlap 
between the sets of common neighbors of $\{v,w\}$ and of $\{\tilde{w}, f_{\tilde w}^{-1}(v)\}$ {\it in} $\Gamma$,
say the size is proportional to the expected number of common neighbors of 2 points in an Erd\H os--R\'enyi graph: $\Theta(p^2n)$. If $p^2n = \omega(1)$, then $\Theta(p^2n)$ is much larger than what is expected for the size of common neighbors of 3 or 4 distinct points in $\Gamma$ (which is of order $\Theta(np^3)$ or $\Theta(np^4)$, respectively).
Thus, the only reasonable explanation is that the unordered pairs $\{v,w\}$ and $\{\tilde{w}, f_{\tilde w}^{-1}(v)\}$
are actually the same pair in $\Gamma$, i.e $w=\tilde w$ and $v = f^{-1}_{\tilde w}(v)$.
Representing it as a relation between $\cal M$ and $\cal V$, we can write that, conditioned
on certain ``typical'' event of a high probability, there is an implication
\begin{align} \label{eq: IVVandW}
(v,w), (\tilde{w}, f_{\tilde w}^{-1}(v)) \in \cal{V} \Rightarrow (v,w),(w,v) \in \cal M .\end{align}
We want to use \eqref{eq: IVVandW} to derive a self bounding inequality on the cardinality of $\cal{M}$
which would force $\cal M = \cal P$ with high probability.
To that end, we split $[n]$ into two subsets $J_{\cal M}$ and $J_{\cal M}^c$, where
$J_{\cal M}$ is defined as a collection of those vertices $v$ such that a majority of pairs $(v,w) \in \cal P$
is contained in $\cal M$. 
%If we hope this relation could be used to impose sufficient constraints which forces $\cal M = \cal P$ with high probability, then a quantitative statement showing that the set $\cal V$ is much closer to $\cal P$ than $\cal M$ seems necessary. This is partially true, 
%in the actual proof, we will break $[n]$ into $J_{\cal M}$, the collection of those $v$ such that majority of $(v,w) \in \cal P$
%is contained in $\cal M$, and its complement. 
From Step III, we know with high probability that $|J_{\cal M}^c| = o(n)$ (assuming right choices of parameters). 
Further, we show that for $v \in J_{\cal M}$, the proportion of the pairs $(v,w)$ in $\cal V^c$
is insignificant compared to the proportion of the pairs in $\cal M^c$. This allows us to use \eqref{eq: IVVandW}
to conclude that for a majority of pairs
$(v,w) \in \cal{M}^c$ with $v \in J_{\cal M}$, we must have $f_v(w)\in J_{\cal M}^c$. Observe that a partial consequence is that
$J_{\cal M}^c \neq \emptyset$ whenever $\cal M \neq \cal P$. 

Relying on these observations, we derive some structural information for $\Gamma$ and $\tilde \Gamma$.
The two crucial statements are (a) by removing $o( |J_{\cal M}^c|np)$ edges,
$\tilde{\Gamma}_{J_{\cal M}}$ becomes a subgraph of $\Gamma_{J_{\cal M}}$ via the identity map, and (b) the number of edges connecting $J_{\cal M}$ and $J_{\cal M}^c$ in $\tilde \Gamma$ is similar to that in $\Gamma$,
which implies that for a typical point in $J^c_{\cal M}$, majority of its neighbors in $\tilde \Gamma$ are in $J_{\cal M}$.

The conditions (a) and (b), together with the assumption that $J^c_{\cal M} \neq \emptyset$,
allow us to conclude that there exists a vertex $v \in J^c_{\cal M}$ such that a constant proportion
of its neighbors are not preserved by the map $f_v$ (i.e. mapped to different vertices of $\tilde\Gamma$),
and their image is in $J_{\cal M}$. And if any two such neighbors $w,w'$ form an edge $\{w,w'\}$ in $\Gamma$ (there will be about $\Omega( np \cdot np \cdot p)$ such edges), then typically
$$ \{f_v(w), f_v(w')\}\neq \{w,w'\} \mbox{ and }\{ f_v(w), f_v(w')\} \in E(\Gamma)\cap E(\tilde\Gamma)%\mbox{ (not just an edge of }\tilde \Gamma).
.$$
But the latter is unlikely to be observed in a typical realization $\Gamma$ as
can be shown using an argument similar to that in Step I. %The probability that a nontrivial portion of edges in $N_{\Gamma }(v)$ has the above property beats the number of typical realizations of (more rigorously, a portion of) $N_{\Gamma }(v)$ and $f_v$.
Hence, it is likely that $J^c_{\cal M}=\emptyset$ and thus $\Gamma = \tilde \Gamma$ w.h.p.

\medskip

\co{At a high level, our argument resembles the strategy
in the earlier work \cite{PRS}, where shotgun assembly of a randomly colored hypercube
from $2$--neighborhoods of its vertices was considered.
One of the key ideas in \cite{PRS} is to verify the reconstructability in two steps,
first, by showing that any bijective mapping on the vertices
of the hypercube which preserves
the local structure, ``roughly maps neighbourhoods to neighbourhoods'',
and second, showing that any such mapping is with a high probability
an automorphism of the cube. In our work, we implement a relative of this
strategy via our four--step procedure. We note here that the graph
models considered in \cite{PRS} and the present paper are completely different,
and technical aspects of the two works cannot be matched.}

\bigskip

The paper is organized as follows.
In Section~\ref{s:prelim}, we revise the notation and state some typical properties of Erd\H os--R\'enyi graphs
important for our argument.
The four steps of the argument are successively carried out in Sections~\ref{s:step1},~\ref{s:step2},~\ref{s:step3},~\ref{s:step4}.
In Section~\ref{s:entropy} we derive Theorem~\ref{3u14o21u4oi}.
Finally, in Section~\ref{s:further} we discuss some further problems related to the reconstruction of random graphs.

\section{Notation and preliminaries}\label{s:prelim}

\HL{We start this section with a set definitions used throughout the paper. Although
some of the notions were already given in the introduction, we prefer
to recall them here as well, for the convenience of referencing.}

\begin{defi}
For every graph $G$, denote by $V(G)$ its vertex set and by $E(G)$ its edge set.
\end{defi}

\begin{defi}
For a graph $G$ on a vertex set $V$ and a subset $J \subset V$, let $G_J$ be the induced 
subgraph of $G$ with the vertex set $J$.
\end{defi}

\begin{defi}
Let $i\in\N$.
Given two graphs $G,G'$ with $i\in V(G)\cap V(G')$,
we say that $G$ and $G'$ are %$i$--isomorphic
\co{isomorphic with fixed point $i$} if
\begin{itemize}
\item The vertex sets $V(G)$ and $V(G')$ have the same size;
\item There is a bijection $f:V(G)\to V(G')$ such that $\{f(v),f(w)\}\in E(G')$ if and only if $\{v,w\}\in E(G)$,
and such that $f(i)=i$.
\end{itemize}
\end{defi}

\begin{defi}
Let $G$ be a graph on $[n]$, and let $i\in [n]$.
Denote by $N_G(i)$ the $1$--neighborhood of $i$ in $G$ which we view as the {\bf subgraph} of $G$ induced by the set of vertices
at distance at most one from $i$.
\end{defi}

\begin{defi}
\co{Let $G$ be a graph on $[n]$, and let $v \in [n]$.
We write $w\stackrel{G}{\sim}v$ for any vertex $w\in V(N_G(v))\setminus\{v\}$,
and $w\stackrel{G}{\nsim} v$ for any vertex $w\notin V(N_G(v))\setminus\{v\}$.}
\end{defi}

\begin{defi}\label{aspfoiafpoifnqpfinqpfinapi}
For any graph $G$ on $[n]$ and each $i \in [n]$, let $S_G(i)$ be the collection of unordered 
pairs of vertices in $V(N_G(i))\backslash \{i\}$.
%So $S_G(i)$ 
%represents "all the potential edges" connecting 1-neighbors of $i$.
Further, let $\S_2$ denote the unordered pairs \co{of distinct elements} of $[n]$.
\end{defi}

In what follows, $\Gamma$ is a $G(n,p)$ random graph,
and for every realization of $\Gamma$, $\tilde\Gamma$ is a graph on $[n]$
such that $N_\Gamma(i)$ and $N_{\tilde\Gamma}(i)$ are
isomorphic with fixed point $i$
for every $1\leq i\leq n$. Our main goal is to show that, under appropriate conditions on $p$,
necessarily $\Gamma=\tilde\Gamma$ with probability $1-n^{-\omega(1)}$.
For each $i$, we let $f_i:\,V(N_\Gamma(i))\to V(N_{\tilde\Gamma}(i))$ be an
\co{isomorphism with fixed point $i$}
of the respective neighborhoods, and assume that $f_i$'s, viewed as random variables,
are measurable with respect to $\Gamma$.
Note that each $f_i$ naturally extends to a mapping from $S_\Gamma(i)$ to $S_{\tilde \Gamma}(i)$.

\begin{defi}\label{aifjnapifjnpfqijwnfpiqjnfpi}
Let $\varepsilon>0$.
An unordered pair $\{v,w\} \in \S_2$ is {\it $(1-\varepsilon)$--focused}     
if there is a subset $I \subset [n]$ with $ |I| \ge (1-\varepsilon) p^2n$
such that $\forall \;i \in I$,  $\{v,w\} \in S_\Gamma(i)$, and 
$f_i(\{v,w\}) = f_j(\{v,w\})$ for all $i,j\in I$.
We will further say that the pair $f_i(\{v,w\})$ (for $i\in I$) is a {\it focus} of $\{v,w\}$.
\end{defi}

In the next proposition, we estimate probabilities of certain events encapsulating ``typical''
properties of the Erd\H os--R\'enyi graph $\Gamma$ which we will need later. Clearly, most of those
properties regularly appear in some form in the random graph literature.
Although the proofs are quite standard, we prefer
to give them for completeness.
\begin{prop} \label{prop: GammaTypical}
Let, as before, $\Gamma$ be a $G(n,p)$ random graph and let\footnote{We shall use the constant $\Cgl$ throughout the paper.}
$$
\Cgl:=\frac{1}{100}.
$$
Define event $\Event_{typ}= \Event_{typ}(n,p)$ as the intersection of the following events.

[Events characterizing the number of (common) neighbors of $1,\, 2$, or $3$ vertices in the graph:]
\begin{align}
\label{eq: EtypDegree} &\Big\{ \big| |V(N_\Gamma(i))| - pn \big| \leq  \frac{\log n}{\sqrt{pn}} pn \mbox{ for all $i\in[n]$}\Big\},\\
& \label{eq: EtypCommonNeighbors2}\Big\{
\mbox{For every pair of vertices $\{v,w\}$,}  \\
&\hspace{1cm}|\{i\in[n]:\;\{v,w\}\subset V(N_\Gamma(i))\setminus\{i\}\}|\in[(1-\log^{-\Cgl}n)p^2n,(1+\log^{-\Cgl}n)p^2n]
\Big\},\nonumber  \\
& \label{eq: EtypCommonNeighbors3pts}\bigg\{ \forall \mbox{ distinct vertices } v,w,u \in [n],\,    
|\{i\in[n]:\;\{v,w,u\}\subset V(N_\Gamma(i))\setminus\{i\}\}|\le \frac{1}{10}p^2n \bigg\}. 
\end{align}
[Typical properties of subgraphs in a neighborhood of a vertex:]
\begin{align}
&  \label{eq: EtypSubgraphsEdgeCounts} \bigg\{\forall\; v \in [n] \mbox{ and }\forall\; J \subset V(N_\Gamma(v))\backslash \{v\},\;
            \bigg| |E(\Gamma_J)| - p{ |J| \choose 2} \bigg| \leq 8 n^{3/2}p^2\bigg\}, \\
& \label{eq: EtypEdgetoBigPart} \bigg\{ \forall\;  v \in [n],\,  J \subset V(N_{\Gamma}(v)) \backslash \{v\} \mbox{ with } |J| \le \frac{1}{3}pn, \mbox{ we have} \\
& \hspace{0.3cm}\Big| \Big\{ w\snb{\Gamma}v\,:\,
%\in V(N_{\Gamma}(v)) \backslash \{v\} \, :\,  
    \big| \big\{ u\snb{\Gamma}v\,:\, %\in (V(N_{\Gamma}(v))\backslash \{v\}) \backslash J \,:\, 
    u\notin J,\,
    \{w,u\} \in E(\Gamma) \big\}\big|
     \le 0.999(pn-|J|)p \Big\}\Big| 
                \le \frac{ \log^2 n }{ p^2n } |J|   \bigg\}, \nonumber \\
& \label{eq: EtypLocalSubgraphMap}\bigg\{  \forall\; u\in [n] \mbox{ and } \forall \mbox{ disjoint } I,J 
    \subset V(N_\Gamma(u)) \backslash\{u\} \mbox{ with }  |I|=|J| \ge  \frac{pn}{\log n} 
    \mbox{ and every bijection } g: I \mapsto J, \\
    & \hspace{0.3cm} |\{ \{v,w\} \subset I \,:\, \{v,w\}, \{g(v), g(w)\} \in E(\Gamma) \}| 
    \le  0.001\cdot \frac{|I|^2}{2}p \bigg\}. \nonumber
\end{align}
[The number of edges between a subset of $[n]$ and its complement:]
\begin{align}
& \label{eq: EtypEdgeToComplement}\bigg\{ \forall\; J \subset [n], \big| |E(\Gamma_{J,J^c})| - |J|(n-|J|)p \big| \le \frac{\log(n)}{\sqrt{pn}} |J|pn \bigg\},
\end{align}
where $\Gamma_{J,J^c}$ is the bipartite subgraph of $\Gamma$ on the vertex set $J\sqcup J^c$
(where we keep only edges connecting $J$ to $J^c$).

Then, assuming that $p^2n=\omega(\log^{2}n)$ and $p\leq 0.0001$, we have $\Prob(\Event_{typ})=1-n^{-\omega(1)}$.
\end{prop}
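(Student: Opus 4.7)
The plan is to verify each of the seven events defining $\Event_{typ}$ separately via a Chernoff/Bernstein-type concentration bound combined with an appropriate union bound, exploiting the independence of edges in $\Gamma$.

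For the ``pointwise'' events \eqref{eq: EtypDegree}, \eqref{eq: EtypCommonNeighbors2}, \eqref{eq: EtypCommonNeighbors3pts}, and \eqref{eq: EtypEdgeToComplement}, each quantity of interest is exactly a sum of independent Bernoulli variables: $|V(N_\Gamma(i))|\sim\mathrm{Bin}(n-1,p)$; the number of common neighbors of a pair is $\mathrm{Bin}(n-2,p^2)$; the number of common neighbors of a triple is $\mathrm{Bin}(n-3,p^3)$; and $|E(\Gamma_{J,J^c})|\sim\mathrm{Bin}(|J|(n-|J|),p)$. A standard multiplicative Chernoff bound with the prescribed relative deviations ($\log(n)/\sqrt{np}$ or $\log^{-\Cgl} n$) yields a tail of order $\exp(-\omega(\log^2 n))$ in each case, using $np^2=\omega(\log^2 n)$. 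The union bounds --- over $n$ vertices, $\binom{n}{2}$ pairs, $\binom{n}{3}$ triples, and at most $2^n$ subsets --- are all dwarfed by these tails. For \eqref{eq: EtypCommonNeighbors3pts} one additionally notes that $\mathbb{E}[\mathrm{Bin}(n,p^3)]\le np^3\ll np^2/10$ since $p\le 0.0001$, so an easy upper-tail Chernoff suffices.

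The events \eqref{eq: EtypSubgraphsEdgeCounts}, \eqref{eq: EtypEdgetoBigPart}, and \eqref{eq: EtypLocalSubgraphMap} require union bounds over the exponentially many subsets of $V(N_\Gamma(v))\setminus\{v\}$. The workaround is to first condition on \eqref{eq: EtypDegree}, so that $|V(N_\Gamma(v))\setminus\{v\}|\le 2np$ for every $v$, capping the number of such subsets at $2^{2np}$. For \eqref{eq: EtypSubgraphsEdgeCounts}, conditioning additionally on $V(N_\Gamma(v))$ makes the edges inside it independent $\mathrm{Ber}(p)$, so $|E(\Gamma_J)|\sim\mathrm{Bin}(\binom{|J|}{2},p)$ and Chernoff with absolute deviation $8 n^{3/2}p^2$ gives tail $\exp(-c\,n^2 p^3)$, beating $n\cdot 2^{2np}$ under our hypothesis. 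For \eqref{eq: EtypEdgetoBigPart}, the number of neighbors of a fixed $w\snb{\Gamma}v$ inside $V(N_\Gamma(v))\setminus(J\cup\{v,w\})$ is conditionally $\mathrm{Bin}(np-|J|-O(1),p)$, and different $w$'s produce independent counts; Chernoff shows that a given $w$ is ``bad'' with probability at most $\exp(-c\,np)$, and a second Chernoff bounds the number of bad $w$'s by $\frac{\log^2 n}{np^2}|J|$ after a union bound over $v$ and $J$.

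The main technical point is \eqref{eq: EtypLocalSubgraphMap}, which additionally demands a union bound over all bijections $g\colon I\to J$, contributing a factor up to $(2np)!\le \exp(O(np\log n))$. The key observation enabling the argument is that since $I$ and $J$ are disjoint, the pairs $\{v,w\}\subset I$ and their images $\{g(v),g(w)\}\subset J$ involve pairwise disjoint edges of $\Gamma$, so the events ``$\{v,w\},\{g(v),g(w)\}\in E(\Gamma)$'' for distinct $\{v,w\}\subset I$ are mutually independent $\mathrm{Ber}(p^2)$; the count is thus $\mathrm{Bin}(\binom{|I|}{2},p^2)$ with mean $\tfrac{1}{2}|I|^2 p^2(1+o(1))$. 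The threshold $0.001\,|I|^2 p/2$ exceeds the mean by a factor $\Theta(1/p)\gg 1$, so a multiplicative Chernoff yields tail at most $\exp(-c\,|I|^2 p\log(1/p))\le\exp(-c\,n^2 p^3\log(1/p)/\log^2 n)$. This must beat the combinatorial factor $n\cdot n\cdot 2^{4np}\cdot\exp(O(np\log n))=\exp(O(np\log n))$, which reduces to the requirement $np^2\log(1/p)\gg\log^3 n$, ensured by $np^2=\omega(\log^2 n)$ together with $p\le 0.0001$ (so $\log(1/p)$ is bounded below by a positive constant). I expect this balancing of the Chernoff exponent against the bijection-counting factor to be the main obstacle; the remaining events reduce to routine concentration once the high-probability bound $|V(N_\Gamma(v))|\le 2np$ is in hand.
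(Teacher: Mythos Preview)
Your approach matches the paper's: Bernstein/Chernoff concentration plus union bounds, with conditioning on the realization of $V(N_\Gamma(v))$ for the three ``local'' events. The paper carries this out as a sequence of separate lemmas with essentially the computations you sketch.

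There is one genuine gap. For \eqref{eq: EtypEdgetoBigPart} you assert that for distinct $w_1,w_2\snb{\Gamma}v$ the counts $|\{u\snb{\Gamma}v:\,u\notin J,\,\{w,u\}\in E(\Gamma)\}|$ are independent, and then apply a ``second Chernoff'' to the number of bad $w$'s. The independence claim is false: if $w_1,w_2\notin J$ then the single edge $\{w_1,w_2\}$ contributes to both counts, so the bad--vertex indicators $Z_{w_1},Z_{w_2}$ are correlated. The paper resolves this with a partitioning trick (Lemma~\ref{lem: edgeFromBigPart}): split $J^c$ into $O(1/\delta)$ pieces each of size at most $\tfrac{\delta}{4}m$, and for $w$ in a given piece $J_1$ count only neighbors in $J^c\setminus J_1$. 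Now for $w_1,w_2\in J_1$ the relevant edge sets $\{\{w_1,u\}:u\in J^c\setminus J_1\}$ and $\{\{w_2,u\}:u\in J^c\setminus J_1\}$ are disjoint, so the counts (and hence the $Z_w$'s within each piece) are genuinely independent and your second Chernoff applies; the small loss from excluding $J_1$ is absorbed into the slack in the constant $0.999$. Without this device the argument does not go through.

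A smaller imprecision: for \eqref{eq: EtypEdgeToComplement} the tail for a fixed set of size $|J|=r$ is $\exp(-\Omega(r\log^2 n))$, because the allowed deviation $\tfrac{\log n}{\sqrt{np}}|J|np$ scales with $|J|$. This is what beats the $\binom{n}{r}\le e^{r\log n}$ union bound; a uniform tail of $\exp(-\omega(\log^2 n))$ would not overcome $2^n$ subsets as you wrote.
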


The proof of the proposition is accomplished as a combination of Lemmas~\ref{1-0948-109481-09481},~\ref{03247109487049870},~\ref{lem: edgeFromBigPart2},~\ref{314u1-49u21-94u2}, and~\ref{lem: ERCrossingedgesCounts} below.
First, we recall the classical Bernstein's inequality
\cite{Ber46} for \co{Binomial} random variables.
\begin{lemma} 
Suppose $Y_1,\dots, Y_m$ are i.i.d Bernoulli random variables with parameter $q\in(0,1)$. Then for every $t>0$ we have
\begin{align} \label{eq: BernStein}
    \Prob\Big\{ \Big| \sum_{i \in [m]} Y_i - mq \Big| \ge  t
    \Big\}  \le 2 \exp\Big( - \frac{t^2/2 }{ mq + t}\Big).
\end{align}
\end{lemma}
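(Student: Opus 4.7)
The plan is to derive the bound via the standard Chernoff exponential moment method. I would first establish the one-sided upper tail estimate
$$\Prob\Big\{\sum_{i\in[m]}Y_i - mq \geq t\Big\} \leq \exp\Big(-\frac{t^2/2}{mq+t}\Big),$$
and then obtain the matching lower tail bound by applying the same one-sided estimate to the i.i.d.\ Bernoullis $1-Y_i$ (which have parameter $1-q$ and whose sum deviating downward corresponds to $\sum Y_i$ deviating upward); combining the two via the union bound produces the factor of $2$ in the statement.

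For the one-sided bound, set $S = \sum_{i\in[m]} Y_i$. For every $\lambda > 0$, Markov's inequality applied to $\exp(\lambda(S-mq))$, together with independence, gives
$$\Prob\{S - mq \geq t\} \leq e^{-\lambda t}\bigl(\Exp e^{\lambda(Y_1-q)}\bigr)^m.$$
To bound the moment generating function of the centered Bernoulli, I would use the fact that $|Y_1 - q| \leq 1$, so $\Exp(Y_1-q)^k \leq \Exp(Y_1-q)^2 = q(1-q) \leq q$ for every integer $k \geq 2$. Expanding the exponential in a power series and applying $1+x \leq e^x$ then yields
$$\Exp e^{\lambda(Y_1-q)} \leq 1 + q(e^\lambda - 1 - \lambda) \leq \exp\bigl(q(e^\lambda - 1 - \lambda)\bigr),$$
so that $\Prob\{S - mq \geq t\} \leq \exp(-\lambda t + mq(e^\lambda - 1 - \lambda))$.

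The final step is an elementary optimization. Using the bound $e^\lambda - 1 - \lambda \leq \lambda^2/(2(1-\lambda))$ valid for $\lambda \in [0,1)$ (which follows by termwise comparison of power series, since $k! \geq 2\cdot(k-2)!$ for $k \geq 2$), I would substitute $\lambda = t/(mq+t) \in (0,1)$. A direct computation gives $1-\lambda = mq/(mq+t)$, so the second term in the exponent becomes $mq\lambda^2/(2(1-\lambda)) = t^2/(2(mq+t))$, while the first term is $\lambda t = t^2/(mq+t)$; subtracting produces exactly the target exponent $-t^2/(2(mq+t))$. I do not anticipate any genuine obstacle here: the lemma is classical and every manipulation is routine. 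The only point requiring minor care is the algebraic verification of the optimization, but the choice of $\lambda$ is essentially forced by the form of the desired bound.
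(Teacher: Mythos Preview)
The paper does not prove this lemma; it merely states it as the classical Bernstein inequality, citing \cite{Ber46}. Your Chernoff-method proof is the standard one, and your upper-tail argument is correct as written.

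There is one small gap in the lower-tail step. Applying your upper-tail bound to the Bernoullis $1-Y_i$ (which have parameter $1-q$) gives
\[
\Prob\Big\{\sum_i Y_i - mq \leq -t\Big\}=\Prob\Big\{\sum_i(1-Y_i)-m(1-q)\geq t\Big\}\leq \exp\Big(-\frac{t^2/2}{m(1-q)+t}\Big),
\]
with $m(1-q)$, not $mq$, in the denominator; when $q<1/2$ this does not imply the stated bound. The fix is minor: run Chernoff directly on the lower tail. For $\lambda>0$,
\[
\Prob\{S-mq\leq -t\}\leq e^{-\lambda t}\bigl(\Exp e^{-\lambda(Y_1-q)}\bigr)^m,
\]
and since $|Y_1-q|\leq 1$ gives $\bigl|\Exp(Y_1-q)^k\bigr|\leq \Exp|Y_1-q|^k\leq \Exp(Y_1-q)^2\leq q$ for all $k\geq 2$, the same expansion yields $\Exp e^{-\lambda(Y_1-q)}\leq \exp\bigl(q(e^{\lambda}-1-\lambda)\bigr)$. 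The identical optimization $\lambda=t/(mq+t)$ then produces the matching exponent, and the union bound gives the factor $2$.
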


\begin{lemma}\label{1-0948-109481-09481}
   Assume that $p\leq \frac{1}{20}$ and that $p^2n=\omega(\log^{1+2\Cgl} n)$.
   Then the events \eqref{eq: EtypDegree}, \eqref{eq: EtypCommonNeighbors2}, and \eqref{eq: EtypCommonNeighbors3pts} hold with probability at least $1- n^{-\omega(1)}$.
\end{lemma}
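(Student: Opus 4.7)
The plan is to prove the three probability bounds separately, each time reducing to a Binomial tail estimate via Bernstein's inequality \eqref{eq: BernStein}, and then taking a union bound over vertices, pairs, or triples. In each case the exponential decay coming from Bernstein beats the polynomial loss of the union bound because of the quantitative hypothesis $np^2=\omega(\log^{1+2\Cgl} n)$ (which in particular forces $np=\omega(\log^2 n)$, given the bound $p\le 1/20$).

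For \eqref{eq: EtypDegree}, fix $i\in[n]$ and observe that $|V(N_\Gamma(i))|-1$ is a sum of $n-1$ i.i.d.\ $\mathrm{Bernoulli}(p)$ variables, so its expectation differs from $np$ by $O(p)=o(1)$. I would apply \eqref{eq: BernStein} with $m=n-1$, $q=p$, and $t = \frac{\log n}{\sqrt{np}}\,np - 1 = \log(n)\sqrt{np}-1$. Since $np=\omega(\log^2 n)$, we have $t\le np$, so the exponent in \eqref{eq: BernStein} is $\Omega(t^2/np)=\Omega(\log^2 n)$, giving failure probability $n^{-\omega(1)}$ per vertex. A union bound over the $n$ vertices still yields $n^{-\omega(1)}$.

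For \eqref{eq: EtypCommonNeighbors2}, fix a pair $\{v,w\}\in\S_2$. The number of indices $i\in[n]$ with $\{v,w\}\subset V(N_\Gamma(i))\setminus\{i\}$ equals $\sum_{i\notin\{v,w\}}\mathbf{1}_{\{i,v\}\in E(\Gamma)}\mathbf{1}_{\{i,w\}\in E(\Gamma)}$, which is a sum of $n-2$ i.i.d.\ $\mathrm{Bernoulli}(p^2)$ variables with mean in $[(1-o(1))np^2,np^2]$. Applying \eqref{eq: BernStein} with $t=\frac{1}{2}\log^{-\Cgl} n\cdot np^2$ (which is dominated by $mq=(n-2)p^2$ in the denominator of the Bernstein exponent), the exponent becomes $\Omega(\log^{-2\Cgl}n\cdot np^2)=\omega(\log n)$ by the hypothesis $np^2=\omega(\log^{1+2\Cgl}n)$. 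This yields per-pair failure probability $n^{-\omega(1)}$, which survives the union bound over the $\binom{n}{2}$ pairs.

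For \eqref{eq: EtypCommonNeighbors3pts}, fix distinct $v,w,u\in[n]$. The relevant count is a sum of $n-3$ i.i.d.\ $\mathrm{Bernoulli}(p^3)$ variables, hence has expectation at most $np^3\le p\cdot np^2\le \tfrac{1}{20}np^2$. The deviation from the mean required to exceed $\tfrac{1}{10}np^2$ is therefore at least $\tfrac{1}{20}np^2$; Bernstein with $t=\tfrac{1}{20}np^2$ gives exponent $\Omega(t^2/(np^3+t))=\Omega(np^2)$ using $p\le 1/20$, which again is $\omega(\log n)$. The union bound over $\binom{n}{3}\le n^3$ triples absorbs the polynomial loss. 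The only mild subtlety throughout is picking $t$ so that it is dominated by $mq$ in the Bernstein denominator (so the variance term rather than the range term controls the exponent); this is automatic from $p\le 1/20$ and $np^2=\omega(\log^{1+2\Cgl} n)$. No genuine obstacle is expected.
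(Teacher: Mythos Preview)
Your proposal is correct and follows essentially the same route as the paper: for each of the three events you identify the relevant count as a Binomial variable, apply Bernstein's inequality \eqref{eq: BernStein} with the same choices of $t$ (namely $\log(n)\sqrt{np}$, $\tfrac12 np^2\log^{-\Cgl}n$, and $\tfrac{1}{20}np^2$), and union bound. One small quibble: from $p\le 1/20$ and $np^2=\omega(\log^{1+2\Cgl}n)$ you only get $np=\omega(\log^{1+2\Cgl}n)$, not $np=\omega(\log^2 n)$ as you claim parenthetically, but this is harmless since the Bernstein exponent for \eqref{eq: EtypDegree} is $\Omega(\log^2 n)$ regardless of whether $t\le np$ or $t>np$.
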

\begin{proof}
    The arguments to prove probability estimates for these three events are essentially the same. 
    Notice that for any fixed $k$ points in $[n]$, the number of common neighbors of these $k$ points in $\Gamma$ is a Binomial random variable of parameter $n-k$ and $p^k$. We can then apply the Bernstein inequality with an appropriate choice of the parameter, together with the union bound argument to get the result. 
    
    Let $v \in [n]$. By Bernstein's inequality \eqref{eq: BernStein}, 
    \begin{align*}
    \Prob\bigg\{ \bigg| 
    |V(N_{\Gamma}(v)) \backslash \{v\}| 
 - (n-1)p \bigg|\ge t \bigg\} \le 2 \exp \bigg( - \frac{t^2/2}{ (n-1)p + t}
   \bigg).
    \end{align*}
    Taking $t = \log(n) \sqrt{pn} $ and by the union bound argument over all $v \in [n]$, we obtain that \eqref{eq: EtypDegree} has probability $1- n^{-\omega(1)}$. 
    
    For \eqref{eq: EtypCommonNeighbors2}, we have for any fixed pair $\{v,w\}$
    $$
    \Prob\Big\{\big||\{i\in[n]:\;\{v,w\}\subset V(N_\Gamma(i))\setminus\{i\}\}|-(n-2)p^2\big|\geq t\Big\}
    \leq 2\bigg( - \frac{t^2/2}{ (n-2)p^2 + t}\bigg),
    $$
    and taking $t=\frac{1}{2}p^2n\log^{-\Cgl}n$ and by the union bound argument, we get the desired conclusion.
    
    As for \eqref{eq: EtypCommonNeighbors3pts},
by Bernstein's inequality \eqref{eq: BernStein}, for any 3 distinct points $\{v,w,u\}$,
$$\Prob\bigg\{ \bigg| 
    |V(N_{\Gamma}(v)) \cap V(N_{\Gamma}(w)) \cap V(N_{\Gamma}(u)) \backslash \{v,w,u\}| 
 - (n-3)p^3 \bigg|\ge t \bigg\} \le 2 \exp \bigg( - \frac{t^2/2}{ (n-3)p^3 + t}
   \bigg).$$
   Let $ t = \frac{1}{20} p^2n$. With the assumption that $p^2n = \omega(\log n) $,
 we get the bound 
$$ 2 \exp \bigg( - \frac{t^2/2}{ (n-3)p^3 + t} \bigg) = 
 2\exp \big( - O(p^2n) \big) = 
n^{-\omega(1)}.$$
Furthermore, assuming $p \le \frac{1}{20}$, we have $(n-3)p^3 \le \frac{1}{20}p^2n$. And thus, 
$$\Prob\bigg\{  
    |V(N_{\Gamma}(v)) \cap V(N_{\Gamma}(w)) \cap V(N_{\Gamma}(u)) \backslash \{v,w,u\}| 
 \ge \frac{1}{10}p^2n \bigg\} = n^{-\omega(1)}.$$
Together with the union bound argument, we get the desired result. 
\end{proof}

Note that given a fixed subset $J$ of $[n]$,
the size of the edge set of the graph $\Gamma_J$
is concentrated around $p|J|(|J|-1)/2$. The next lemma provides a relative of that statement,
in which the set $J$ is chosen within a neighborhood of a vertex.
\begin{lemma}\label{03247109487049870}
Assume that $p^2n=\omega(1)$. Then the event \eqref{eq: EtypSubgraphsEdgeCounts}
holds with probability $1-n^{-\omega(1)}$.
%    \begin{align*}
%        \Prob\bigg\{ 
%            \exists v \in [n] \mbox{ and } J \subset V(N_\Gamma(v))\backslash \{v\}
%            \mbox{ such that } |V(N_\Gamma(v))|\leq 2np\mbox{ and }
%            \bigg| |E(\Gamma_J)| - p{ |J| \choose 2} \bigg| \ge 8 n^{3/2}p^2 
%        \bigg\}
%        = n^{-\omega(1)}.
%    \end{align*}
\end{lemma}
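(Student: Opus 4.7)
My plan is to exploit the independence between edges incident to a given vertex $v$ and the remaining edges of $\Gamma$: conditional on the neighborhood $A_v := V(N_\Gamma(v)) \setminus \{v\}$, the edges within $A_v$ are still i.i.d.\ $\mathrm{Bernoulli}(p)$, so the concentration estimate for a fixed subset $J \subset A_v$ is unaffected by the conditioning. The hypothesis $np^2 = \omega(1)$ gives $p = \omega(n^{-1/2})$ and hence $np = \omega(n^{1/2}) \gg \log n$, which will be enough to absorb the union bounds.

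Concretely, I would proceed in four short steps. First, applying \eqref{eq: BernStein} to $|A_v| \sim \mathrm{Binomial}(n-1,p)$ and taking a union bound over $v$, the event $\{|A_v| \le 2np \text{ for all } v \in [n]\}$ has probability $1 - n^{-\omega(1)}$. Second, I fix $v$ and condition on a realization of $A_v$ with $|A_v| \le 2np$; for each fixed $J \subset A_v$, the quantity $|E(\Gamma_J)|$ is a sum of $\binom{|J|}{2}$ independent $\mathrm{Bernoulli}(p)$ variables, so by \eqref{eq: BernStein} with $t = 8n^{3/2}p^2$, using $p\binom{|J|}{2} \le 2n^2p^3$ together with $n^2 p^3 \ge n^{3/2}p^2$ (which holds since $p \ge n^{-1/2}$), one obtains
$$ \Prob\Big( \big| |E(\Gamma_J)| - p\tbinom{|J|}{2} \big| \ge 8n^{3/2}p^2 \,\Big|\, A_v \Big) \le 2\exp(-c \cdot np) $$
for an absolute constant $c > 2\log 2$ (a brief calculation gives $c \approx 3.2$). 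Third, union-bounding over the $2^{|A_v|} \le 2^{2np}$ subsets $J \subset A_v$ yields a conditional failure probability of at most $\exp(-(c - 2\log 2)\,np) = \exp(-\Omega(np))$. Fourth, the outer union bound over $v \in [n]$ contributes only a factor of $n$, and since $np \gg \log n$, the total failure probability is $n \exp(-\Omega(np)) = n^{-\omega(1)}$.

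The only subtlety I foresee is checking that the Bernstein rate beats the entropy cost of the union bound over subsets. A naive union bound over arbitrary subsets of $[n]$ would cost $\binom{n}{\le 2np} \approx \exp(2np\log(1/p))$, which for small $p$ can overwhelm the Bernstein rate $\Theta(np)$. The crux of the argument above is that I only union-bound over subsets of the random set $A_v$ (and only then over $v$), which replaces that cost with the much cheaper $2^{2np}$, whose logarithm is $(2\log 2)\,np$. This is easily absorbed by the Bernstein rate $\approx 3.2\,np$, and the rest of the proof is routine.
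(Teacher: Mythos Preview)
Your proposal is correct and follows essentially the same approach as the paper's proof: condition on the neighborhood $A_v$ (restricted to the high-probability event $|A_v|\le 2np$), apply Bernstein's inequality to each fixed $J\subset A_v$, and union-bound over the $2^{2np}$ subsets and then over $v$. Your explicit verification that the Bernstein rate ($\approx 3.2\,np$, coming from $t^2/2$ divided by $2n^2p^3+8n^{3/2}p^2\le 10n^2p^3$) beats the subset-entropy cost $(2\log 2)\,np$ is the same numerical check the paper leaves implicit, and your closing remark about why one must union-bound over subsets of $A_v$ rather than over all small subsets of $[n]$ is exactly the point.
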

\begin{proof}
For $v \in [n]$, condition on any realization of $V(N_{\Gamma}(v))$ with $|V(N_\Gamma(v))|\leq 2pn$
(we will denote the conditional probability measure by $\tilde\Prob$).
Let $J$ be a subset of \co{the set of neighbors of $v$ in $\Gamma$}. %$V(N_{\Gamma}(v))\backslash \{v\}$.
Then $|E(\Gamma_J)|$ is a Binomial random variable 
with parameters ${ |J| \choose 2}$ and $p$.
By Bernstein's inequality for Binomial random variables,
\begin{align*}
    \tilde \Prob \bigg\{ 
        \bigg||E(\Gamma_J)| - p{ |J| \choose 2} \bigg|
        \ge t
    \bigg\}  
    \le 2\exp \bigg( - \frac{t^2/2}{ p {|J| \choose 2} + t } \bigg),\quad t>0.
\end{align*}
By the union bound argument we get
\begin{align*}
    & \tilde\Prob \bigg\{  
        \exists J \subset \{y:\,\{v,y\}\in E(\Gamma)\}  %V( N_{\Gamma}(v))\backslash \{v\} 
        \mbox{ such that }
        \bigg||E(\Gamma_J)| - \frac{p|J|(|J|-1)}{2} \bigg|
        \ge t
    \bigg\} \\
    &\hspace{2cm}\le  2^{2 pn} \cdot 
        2\exp \bigg( - \frac{t^2/2}{ 2n^2p^3 + t } \bigg)
    \le 2 \exp\bigg( \log(2)\cdot 2 np
    -\frac{ t^2/2}{ 2n^2p^3 + t } \bigg),\quad t>0.
\end{align*}
By taking $t \ge  8n^{3/2}p^{2}$, the last term can be bounded by 
$n^{-\omega(1)}$. Together with the union bound argument over all $v \in [n]$, 
and since $|V(N_\Gamma(v))|\leq 2pn$, $1\leq v\leq n$, with probability $1-n^{-\omega(1)}$,
we obtain the statement of the lemma. 
\end{proof}
\begin{rem}
Note that since the above statement is ``local'' (is about $1$--neighborhoods of $\Gamma$), it is
immediately translated to the graph $\tilde\Gamma$ with no changes.
\end{rem}

The next lemma is a crucial part of the self-bounding argument in Step IV mentioned in the introduction.
Given a fixed subset $J$ of the vertices of a labeled, not very sparse $G(m,q)$
Erd\H os--R\'enyi graph, standard concentration inequalities imply that vertices of the graph 
will be connected with about $|J^c|q$ vertices in $J^c$. When the set $J$ is allowed to depend on the graph,
this no longer holds for {\it all} vertices but remains true for a large proportion of vertices of the graph.
\begin{lemma} \label{lem: edgeFromBigPart}
    Let $m$ be a large integer.
    Let $G$ be an  Erd\H os--R\'enyi graph with parameters $m$ and $q$ such that $ mq \ge (\log m)^2$. 
    Let $\delta = ( \log( \log m))^{-1} $. Then,
    \begin{align*}
        &\Prob \bigg\{  \mbox{There is non-empty } J \subset [m] \mbox{ with } |J| \le \frac{1}{2}m \mbox{ such that } \\ &\hspace{2cm}
         \Big| \Big\{ w \in [m]\, :\,  
          \big| \big\{ u \in J^c \,:\, \{w,u\} \in E(G) \big\} \big|
          \le (1-\delta)|J^c|q\Big\}\Big| \ge \frac{ (\log m)^2 }{2 mq} |J| 
        \bigg\} = m^{-\omega(1)}.
    \end{align*}
\end{lemma}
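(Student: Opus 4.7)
The plan is to apply a union bound over candidate pairs $(J,W)$ with $|J|=k\leq m/2$ and $|W|=\ell:=\lceil\frac{(\log m)^2}{2mq}k\rceil$, and to bound the probability that every $w\in W$ satisfies $X_w\leq(1-\delta)(m-k)q$, where $X_w:=|\{u\in J^c\setminus\{w\}\,:\,\{w,u\}\in E(G)\}|$. The assumption $mq\geq(\log m)^2$ yields $\ell\leq k$, so the number of candidate pairs is $\binom{m}{k}\binom{m}{\ell}\leq\exp(O(k\log m))$, and the main task is to show that the joint probability decays like $\exp(-\omega(k\log m))$.

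The difficulty is that for $w\in J^c$ the $X_w$'s are correlated through edges lying within $J^c$, so I split $W=W_1\sqcup W_2$ with $W_1:=W\cap J$, $W_2:=W\cap J^c$, and handle two regimes separately. In the \emph{small-$W_2$ regime} $|W_2|\leq\delta(m-k)/2$, I introduce surrogates $Y_w:=|\{u\in J^c\setminus W_2\,:\,\{w,u\}\in E(G)\}|$ for $w\in W_2$; the $Y_w$'s are independent Binomials with mean $(m-k-|W_2|)q$, and together with $\{X_w\}_{w\in W_1}$ (independent Binomials with mean $(m-k)q$) they form a jointly independent family since all these variables are built from pairwise-disjoint sets of edges. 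A short arithmetic check shows that $|W_2|\leq\delta(m-k)/2$ forces $(1-\delta)(m-k)q\leq(1-\delta/3)\Exp[Y_w]$, so the event $X_w\leq(1-\delta)(m-k)q$ implies $Y_w\leq(1-\delta/3)\Exp[Y_w]$. Chernoff then yields $\Prob\big(\forall w\in W:\,X_w\leq(1-\delta)(m-k)q\big)\leq\exp(-c\delta^2(m-k)q\,|W|)$, and the key estimate $\delta^2(m-k)q\,\ell\geq c'(\log m)^2k/(\log\log m)^2\gg k\log m$ closes the union bound in this regime.

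In the \emph{large-$W_2$ regime} $|W_2|>\delta(m-k)/2$, I work directly with the aggregate $S:=\sum_{w\in W_2}X_w=2|E(G_{W_2})|+|E(G_{W_2,\,J^c\setminus W_2})|$, which is a non-negative linear combination of two independent Binomials with expectation $|W_2|(|J^c|-1)q$. The bad event forces $S\leq(1-\delta)(m-k)q\,|W_2|$, a downward deviation of order $\delta|W_2|(m-k)q$ from the mean; Bernstein's inequality then gives a bound of the form $\exp(-c\delta^2(m-k)q\,|W_2|)$. With $|W_2|\geq\delta(m-k)/2$ the exponent is of order $m(\log m)^2/(\log\log m)^3$, which dominates the combinatorial factor $\binom{m}{k}\binom{m-k}{|W_2|}\leq\exp(O(m))$ with room to spare.

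The principal obstacle is exactly the dependence between the $X_w$'s inside $J^c$; the decoupling through $Y_w$ works only while $W_2$ is small enough that its presence barely shifts the mean, which is why I need the second regime with the Bernstein argument on the aggregate $S$. The hypothesis $mq\geq(\log m)^2$ together with $\delta=(\log\log m)^{-1}$ drives the key inequality $\delta^2 mq\geq(\log m)^2/(\log\log m)^2\gg\log m$ that makes the union bound work in both regimes.
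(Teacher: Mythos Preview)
Your proof is correct, and the overall strategy---union bound over candidate pairs $(J,W)$ and a Chernoff/Bernstein estimate on the individual probabilities---is sound. The arithmetic checks all go through: in the small-$W_2$ regime your surrogate $Y_w$ construction does give a genuinely jointly independent family (edges used by $\{X_w\}_{w\in W_1}$ have one endpoint in $J$, edges used by $\{Y_w\}_{w\in W_2}$ have both endpoints in $J^c$, and within $W_2$ the sets are disjoint since the other endpoint lies in $J^c\setminus W_2$); the mean shift is controlled by $|W_2|\le\delta(m-k)/2$, and the exponent $\delta^2(m-k)q\,\ell\geq c\,k(\log m)^2/(\log\log m)^2$ beats the combinatorial factor $\exp(O(k\log m))$. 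In the large-$W_2$ regime the aggregate $S=2|E(G_{W_2})|+|E(G_{W_2,J^c\setminus W_2})|$ is a weighted sum of independent Bernoullis with bounded weights, so Bernstein applies; the exponent $\delta^3(m-k)^2q\gtrsim m(\log m)^2/(\log\log m)^3$ dominates the crude $\exp(O(m))$ bound on the number of pairs $(J,W)$. (Your notation $\binom{m}{k}\binom{m-k}{|W_2|}$ for the combinatorial factor in the large regime is a little off---the union bound is still over $W$, not $W_2$---but $\binom{m}{k}\binom{m}{\ell}\le 4^m$ is all that is needed.)

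The paper takes a different route that avoids your case split entirely. Rather than union-bounding over $W$, it fixes $J$, partitions $J^c$ into $l\le 8/\delta$ pieces $J_1,\dots,J_l$ each of size at most $\delta m/4$, and for $w\in J_i$ counts neighbors in $J^c\setminus J_i$. This is essentially your surrogate-$Y_w$ idea, but the small piece removed is a \emph{fixed} partition block rather than $W_2$ itself, so it is always small by construction and independence holds uniformly. A pigeonhole then says that if there are many bad vertices in $[m]$, some piece (or $J$ itself) contains at least a $\tfrac{1}{l+1}$-fraction of them, and one bounds the tail of the number of bad vertices in each piece directly as a sum of independent indicators. The upshot is a single-regime argument with a union bound only over $J$. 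Your two-regime approach is more direct to set up but carries the extra large-$|W_2|$ case; the paper's partition trick is slightly slicker but requires the extra idea of pre-partitioning $J^c$.
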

\begin{proof}
    Fix a positive integer $ r \le \frac{m}{2}$ and a subset $J \subset [m]$ with $|J|=r$. 
    We partition $J^c$ into $l$ subsets $J_1,\dots, J_l$ with $l \le \frac{8}{\delta}$ and
    $ |J_i| \le \frac{\delta}{4}m$ for every $ i \in [l]$. 

    For each $w \in J_1$, let $D_w:= |\{ u \in J^{c}\backslash J_1 \,:\, \{w,u\} \in E(G)\}|$
    and $Z_w$ be the indicator of the event $\{D_w \le |J^{c} \backslash J_1 |q-\delta mq/4\}$. 
    By Bernstein's inequality \eqref{eq: BernStein}, 
    \begin{align*}
         \Prob \{ Z_w = 1 \} \le 
%        \Prob \Big\{
%            \big| D_v - |J^c\backslash J_1| q \big| \ge \frac{\delta}{4} mq 
%        \Big\}
%    \le
        2\exp \bigg( - \frac{ (\delta mq/4)^2/2}{ mq+ \delta mq/4 } \bigg)
    \le \exp \big( - \Omega(\delta^2 mq) \big)
    \end{align*}
    Hence, $\sum_{w\in J_1} Z_w$ is a Binomial($|J_1|,q'$) random variable with parameter
    $q' \le \exp \big( - \Omega(\delta^2 mq) \big)$. Note that
    \begin{align}
        |J_1| q' \le  m q' \le \exp \big( \log(m) - \Omega(\delta^2 mq)  \big)
        \le  \exp( - \Omega(\delta^2 mq) ),
    \end{align}
    where the last inequality follows from our assumption on $mq $ and $\delta$. 
    Thus, for any positive integer \co{$k \le |J_1|$}, 
    \begin{align} \label{eq: lem21300}
        \Prob\Big\{ \sum_{w\in J_1} Z_w \ge k \Big\} = \sum_{s=k}^{|J_1|} {|J_1| \choose s} (q')^s (1-q')^{\co{|J_1|}-s}
        \le \sum_{s=k}^{|J_1|} (|J_1|q')^s \le 2(|J_1|q')^k \le  \exp( - \Omega(\delta^2mqk)).
    \end{align}
    \co{
    Further, the right hand side estimate in \eqref{eq: lem21300} trivially holds for $k>|J_1|$.} We want to choose a suitable $k$ so that the probability is small enough to beat the union bound 
    \co{over all} possible choices of $J$ and $r$.
    By taking $k = \lceil \delta^3 \frac{(\log m)^2}{mq}r \rceil $, we get in view of the above
    \begin{align*}
    \Prob \bigg\{
            \big| \big\{ w \in J_1 \,:\,  
              \co{ D_w } \le (|J^c \backslash J_1| - \delta m/4)q \big\}\big| 
            \ge \delta^3 \frac{ (\log m)^2 }{mq} r
        \bigg\}
  %  =&  \Prob \bigg\{
  %          \big| \big\{ v \in J_1 \,:\,  |\{ w \in J^c\backslash J_1 \,:\, (v,w) \in E(G) \}| 
  %          \le (|J^c \backslash J_1| - \delta m/4)q \big\}\big| 
  %          \ge k
  %      \bigg\} \\
    &\le \exp \big( -  \Omega \big( \delta^5 (\log m)^2 r \big) \big)\\
    &= \exp( -\omega(\log m)r).
    \end{align*}
    Next, due to the assumption $|J_1|\le \frac{\delta}{4}m$ and $r \le \frac{m}{2}$,   
    $$ 
        (|J^c\backslash J_1| -\delta m/4)q \ge  \Big( m-r-\frac{1}{2}\delta m \Big) q  
        \ge (m-r -\delta(m-r)) q
        = (1-\delta)(m-r)q=(1-\delta)|J^c|q.
    $$ 
    \co{Together with the relation $D_w \le  |\{ u \in J^c \,:\, \{w,u\} \in E(G)\}| $, this gives}
    \begin{align} \label{eq: EdgeFromGoodSet1}
        \Prob \bigg\{
            \big| \big\{ w \in J_1 \,:\,  |\{ u \in J^c \,:\, \{w,u\} \in E(G) \}| 
            \le (1-\delta)|J^c|q \big\}\big| 
            \ge \delta^3 \frac{ (\log m)^2 }{mq} r
        \bigg\} 
    = \exp( -\omega(\log m)r).
    \end{align}
    With the same argument, \eqref{eq: EdgeFromGoodSet1} holds when we replace $J_1$ in the above inequality by $J_i$ for every $i \in [l]$.
    It remains to treat vertices in $J$. 
    The argument is essentially the same. For each $w \in J$,  
    we set $D_w:= |\{ u \in J^c \,: \{w,u\} \in E(G)\}|$ and let $Z_w$ be the indicator of the event $\{D_w \le (1-\delta)|J^c|q\}$. Since the expected value of $D_w$ is $|J^c|q$, by Bernstein's inequality \eqref{eq: BernStein} we have 
    $\Prob \{ Z_w =1\} \le  \exp( -\Omega(\delta^2 mq))$. Repeating the same argument for $\sum_{ w\in J} Z_w$ as that for $\sum_{ w\in J_1}Z_w$ we get 
    \eqref{eq: EdgeFromGoodSet1} with $J$ in the place of $J_1$. 

    By taking the union bound, %with $l \le \frac{8}{\delta}$ and $\frac{1}{\delta} = o(m)$
    we get 
    \begin{align*} 
        \Prob \bigg\{
            &\big| \big\{ w \in [m] \,:\,  |\{ u \in J^c \,:\, \{w,u\} \in E(G) \}| 
            \le (1-\delta)|J^c|q \big\}\big| 
            \ge \frac{ (\log m)^2 }{ 2mq} r
        \bigg\} \\
    \le&  \Prob \bigg\{ \exists J' \in \{ J_1,\dots, J_l,\, J\} \mbox{ s.t. } 
            \big| \big\{ w \in J' \,:\,  |\{ u \in J^c \,:\, \{ w,u \} \in E(G) \}| 
            \le (1-\delta)|J^c|q \big\}\big| 
            \ge \frac{\delta}{l+1} \frac{ (\log m)^2 }{2mq} r
        \bigg\} \\
    \le& (l+1)\exp( -\omega(\log m)r) = \exp(-\omega(\log m)r).
    \end{align*}
    Notice that the number of $J \subset [m]$ with $|J|=r$ is 
    $ { m \choose r} \le \exp( \log(m)r)$. 
    Applying the union bound over all $J\subset [m]$ with $|J|=r$ and all positive integer 
    $r \le \frac{m}{2}$, we get the desired bound. 
\end{proof}

\begin{lemma} \label{lem: edgeFromBigPart2}
    Assuming $ p^2n \ge 2\log^2 n$ and $p\leq \frac{1}{20}$,  the event \eqref{eq: EtypEdgetoBigPart} happens with probability $ 1- n^{-\omega(1)}$.
\end{lemma}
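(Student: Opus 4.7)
The plan is to derive the statement from Lemma~\ref{lem: edgeFromBigPart} by applying it, conditionally, inside each $1$--neighborhood of $\Gamma$. The crucial observation is that, thanks to edge independence in $G(n,p)$, conditioning on the identity of the set $V(N_\Gamma(v))$ (equivalently, on which edges incident to $v$ are present) leaves the edges among the other $m_v := |V(N_\Gamma(v))|-1$ vertices i.i.d.\ Bernoulli$(p)$; in other words, the conditional distribution of $\Gamma_{V(N_\Gamma(v))\setminus\{v\}}$ is that of a $G(m_v,p)$ Erd\H os--R\'enyi graph. Hence Lemma~\ref{lem: edgeFromBigPart} can be invoked on that conditional distribution.

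First I would establish a uniform degree estimate. The assumption $np^2\ge 2\log^2 n$ together with $p\le 1/20$ gives $np\ge 40\log^2 n$. Applying Bernstein's inequality~\eqref{eq: BernStein} with deviation $t = np/(\log\log n)^2$ bounds the exponent $t^2/(2((n-1)p+t))$ from below by a quantity of order $np/(\log\log n)^4 = \omega(\log n)$. Union bounding over $v\in[n]$ yields, with probability $1-n^{-\omega(1)}$, an event $\Event_{\mathrm{deg}}$ on which $m_v = np(1+\varepsilon_v)$ with $|\varepsilon_v|\le(\log\log n)^{-2}$ for every $v\in[n]$.

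Next, fix $v$ and condition on any realization of $V(N_\Gamma(v))$ consistent with $\Event_{\mathrm{deg}}$. The conditional distribution of $\Gamma_{V(N_\Gamma(v))\setminus\{v\}}$ is $G(m_v,p)$, and the hypothesis $m_v p\ge\log^2 m_v$ of Lemma~\ref{lem: edgeFromBigPart} is met because $m_v p \ge (1-o(1))np\cdot p\ge (2-o(1))\log^2 n$. With $\delta = 1/\log\log m_v$, the lemma (applied with $q=p$) gives, with conditional probability $1-n^{-\omega(1)}$, that no non-empty $J\subset V(N_\Gamma(v))\setminus\{v\}$ with $|J|\le m_v/2$ admits more than $\frac{(\log m_v)^2}{2m_v p}|J|$ vertices $w\snb{\Gamma}v$ satisfying
$$\big|\{u\snb{\Gamma}v\,:\,u\notin J,\;\{w,u\}\in E(\Gamma)\}\big|\le (1-\delta)(m_v-|J|)p.$$

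Finally, I would verify that on $\Event_{\mathrm{deg}}$ this conclusion implies~\eqref{eq: EtypEdgetoBigPart}. For $|J|\le np/3$, the bound $|J|\le m_v/2$ holds because $m_v\ge(1-o(1))np>2np/3$; a direct computation using $|\varepsilon_v|=o(1)$, $\delta=o(1)$, and $np-|J|\ge 2np/3$ shows $(1-\delta)(m_v-|J|)p\ge 0.999(np-|J|)p$ for large $n$, which upgrades the lemma's threshold to the one appearing in~\eqref{eq: EtypEdgetoBigPart}. The inequality $(\log m_v)^2/(2m_v p)\le(\log n)^2/(np^2)$ is immediate from $m_v\ge np/2$ and $m_v\le n$. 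The trivial case $J=\emptyset$ is handled by a separate Bernstein estimate showing that every $w\snb{\Gamma}v$ has $(1-o(1))m_v p$ $\Gamma$--edges into $V(N_\Gamma(v))\setminus\{v\}$. A last union bound over $v\in[n]$ concludes. The only mildly delicate step is the parameter matching in this last paragraph; once the conditioning is set up, the rest reduces to an off-the-shelf application of Lemma~\ref{lem: edgeFromBigPart}.
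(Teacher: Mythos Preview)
Your proposal is correct and follows essentially the same approach as the paper: condition on the realization of $V(N_\Gamma(v))$, observe that the induced subgraph on the neighbors of $v$ is conditionally $G(m_v,p)$, apply Lemma~\ref{lem: edgeFromBigPart} there, verify the parameter matching (threshold $(1-\delta)(m_v-|J|)p\ge 0.999(np-|J|)p$ and size bound $(\log m_v)^2/(2m_v p)\le (\log n)^2/(np^2)$), handle $J=\emptyset$ separately, and union bound over $v$. The only cosmetic difference is that the paper uses the degree deviation $\frac{\log n}{\sqrt{np}}\,np$ already established in~\eqref{eq: EtypDegree} rather than your $np/(\log\log n)^2$, and invokes Lemma~\ref{1-0948-109481-09481} for the $J=\emptyset$ case.
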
 
\begin{proof}
    Fix $v \in [n]$, consider the event $O_v$ that $ \big| | 
    V(N_{\Gamma}(v))\backslash \{v\} 
    | - pn \big| \le 
    \frac{\log n}{\sqrt{pn}}pn$. 
    Applying Bernstein's inequality \eqref{eq: BernStein}, we get $\Prob(O_v^c) = n^{-\omega(1)}$. 
    Fix any subset $V \subset [n] \backslash \{v\}$ with $ \big| |V| - pn \big|  \le \frac{\log n}{\sqrt{pn}} pn$.
    Note that, conditioned on the event $\{V(N_{\Gamma}(v))\backslash \{v\}= V\}$, 
    the induced graph $\Gamma_V$ is an Erd\H os--R\'enyi graph with parameters $|V|$ and $p$.
    In what follows, by $\tilde\Prob$ we denote the conditional probability given the event
    $\{V(N_{\Gamma}(v))\backslash \{v\}= V\}$.
    
    From the assumption of $n$ and $p$, we have $|V| = (1+o_n(1))pn$, and  
    $ \frac{1}{2} \log n \le \log(|V|) \le \log n$,
     $|V|p \ge (\log(|V|))^2$, and 
     $ \frac{ ( \log|V|)^2}{\log(\log(|V|)) |V|p} \le \frac{ \log^2 n}{p^2n}$.
    Applying Lemma \ref{lem: edgeFromBigPart}, we obtain
     \begin{align*}
        \tilde\Prob \bigg\{ 
            &\mbox{There is non-empty }  J \subset V \mbox{ with } |J| \le \frac{1}{2}\Big(1- \frac{ \log n}{\sqrt{pn}}\Big)pn \mbox{ such that } \\
            & \hspace{0.1cm} \Big| \Big\{ w \in V \, :\,  
                |\{ u \in V \cap J^c\,:\, \{w,u\}\in E(\Gamma) \}| \le 
                0.999(pn-|J|)p \Big\}\Big| 
               \ge \frac{ \log^2 n }{ p^2n } |J|   
                 %\, \bigg\vert \, V(N_{\Gamma}(v)) \backslash \{v\} = V 
        \bigg\} = n^{-\omega(1)}.
    \end{align*}
    Together with the estimate
    $\Prob(O^c_v)=n^{-\omega(1)}$, this implies that the event 
    \begin{align*}
        \bigg\{ 
            &\mbox{There is non-empty } J \subset V(N_{\Gamma}(v)) \backslash \{v\} 
            %\{y:\,\{v,y\}\in E(\Gamma)\}
            \mbox{ with } |J| \le \frac{1}{3}pn \mbox{ such that } \\
            &\Big| \Big\{ w\snb{\Gamma}v\,:\,
            %\in V(N_{\Gamma}(v)) \backslash \{v\} \, 
                \co{\big| \big\{ u\snb{\Gamma}v\,:\,
                u\notin J,
                %\in (V(N_{\Gamma}(v))\backslash \{v\})\cap J^c\,:
                \, \{w,u\}\in E(\Gamma) \big\} \big|}
                 \le 
                    0.999(pn - |J|)p \Big\}\Big| 
                \ge \frac{ \log^2 n }{ p^2n } |J| 
        \bigg\} 
    \end{align*}
    happens with probability $n^{-\omega(1)}$. When the set $J$ is empty, the desired probability bound has
    been verified in Lemma~\ref{1-0948-109481-09481}.
    
    Finally, we can apply the union bound argument over all $v \in [n]$ to get the result. 
\end{proof}

\begin{lemma}
Let $G$ be a labeled $G(m,q)$ Erd\H os--R\'enyi graph with $m$ and $q$ satisfying $mq = \omega((\log m)^2)$
and $q\leq 0.0001$. Then
\begin{align*}
    \Prob \bigg\{  &\exists \mbox{ disjoint sets } I,J \subset [m] \mbox{ with }  |I|=|J| \ge  \frac{m}{10\log m} 
    \mbox{ and a bijection } g: I \mapsto J \mbox{ such that } \\
    &  |\{ v,w \in I \,:\, \{v,w\}, \{g(v), g(w)\} \in E(G) \}| \ge 0.001 {|I| \choose 2}q
        \bigg\} = m^{-\omega(1)}.
\end{align*}
\end{lemma}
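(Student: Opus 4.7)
My plan is to fix a triple $(I, J, g)$ and establish a sharp tail bound on the count of ``doubly matched'' pairs, then close via a union bound over all admissible triples. Set $r = |I| = |J|$ with $m/(10 \log m) \leq r \leq m/2$ and $N := \binom{r}{2}$. For each unordered pair $\{v,w\} \in \binom{I}{2}$, introduce the indicator $Y_{\{v,w\}} := \mathbf{1}[\{v,w\} \in E(G)] \cdot \mathbf{1}[\{g(v), g(w)\} \in E(G)]$ and let $X_{I,J,g} := \sum_{\{v,w\}} Y_{\{v,w\}}$. The crucial observation is that since $I \cap J = \emptyset$ and $g$ is a bijection, the $2N$ edges $\{v,w\}$ and $\{g(v),g(w)\}$ (as $\{v,w\}$ ranges over $\binom{I}{2}$) are pairwise distinct edges of $K_m$. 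Hence the $Y_{\{v,w\}}$ are mutually independent Bernoulli$(q^2)$ random variables, $X_{I,J,g}$ is Binomial$(N, q^2)$, and $\Exp X_{I,J,g} = N q^2$.

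Because $q \leq 0.0001$, the threshold $0.001 N q$ exceeds the mean $N q^2$ by at least a factor of $10^4$; setting $s := 0.001 N q - N q^2 \geq 0.0009 N q$ gives $s \gg N q^2$, so $N q^2 + s \leq 2 s$. Bernstein's inequality \eqref{eq: BernStein} then yields
\[
\Prob\bigl(X_{I,J,g} \geq 0.001 N q\bigr) \leq 2 \exp\Bigl(-\frac{s^2/2}{N q^2 + s}\Bigr) \leq \exp(-c_0 N q)
\]
for some absolute constant $c_0 > 0$ (one can take, e.g.\ $c_0 = 10^{-4}$).

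For the union bound, the number of admissible $(I, J, g)$ with $|I| = r$ is at most $\binom{m}{r}\binom{m-r}{r} r! \leq \exp(O(r \log m))$, using $\binom{m}{r} \leq (em/r)^r$, $r! \leq r^r$, and $\log r \geq \log m - O(\log \log m)$ on the range $r \geq m/(10 \log m)$. The hypothesis $m q = \omega(\log^2 m)$ combined with $r \geq m/(10 \log m)$ gives $r q \geq m q/(10 \log m) = \omega(\log m)$, whence $c_0 N q = \Theta(r^2 q) = \omega(r \log m)$. So at each scale $r$ the total bad probability is $\exp(-\omega(r \log m))$, and since $r \log m \geq m/10$, summing over the $O(m)$ admissible values of $r$ still yields the desired $m^{-\omega(1)}$ estimate. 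The only delicate point in this plan is establishing the mutual independence of the $Y_{\{v,w\}}$'s, which relies crucially on the disjointness $I \cap J = \emptyset$ and the injectivity of $g$; once this is secured, the rest is a routine Bernstein-plus-union-bound whose margin is guaranteed by $q \ll 1$ and $m q \gg \log^2 m$.
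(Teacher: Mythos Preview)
Your proof is correct and follows essentially the same approach as the paper: fix the triple $(I,J,g)$, observe that disjointness of $I$ and $J$ together with injectivity of $g$ makes the count $X_{I,J,g}$ a genuine $\mathrm{Binomial}\big(\binom{r}{2},q^2\big)$ random variable, apply Bernstein's inequality to get a tail of order $\exp(-\Omega(r^2q))$, and then take a union bound over the $\exp(O(r\log m))$ choices of $(I,J,g)$, which is beaten because $rq=\omega(\log m)$. The paper's proof is just a terser version of the same argument.
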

\begin{proof}
First, we fix a positive integer $r \ge  \frac{ m}{10 \log m}$ and fix $I$, $J$, and $g$ satisfying the description stated in the lemma with $|I|=|J|=r$. 
Since $I$ and $J$ are disjoint,
$$
|\{ \{v,w\} \,:\, v,w \in I, \{v,w\}, \{g(v), g(w)\} \in E(G) \}|
$$
is a Binomial random variable with parameters ${ |I| \choose 2}$ and $q^2$, whose expectation is ${ |I| \choose 2}q^2$, which is 
much less than $ 0.001{ |I| \choose 2}q$. 
By Bernstein's inequality for Binomial random variables \eqref{eq: BernStein}, we have 
$$
    \Prob \bigg\{  |\{ \{v,w\} \,:\, v,w \in I, \{v,w\}, \{g(v), g(w)\} 
    \in E(G) \}| \ge 0.001{ |I| \choose 2}q \bigg\}
    \le \exp( - \Omega(r^2q)).
$$
Next, the number of choices for the triple $(I, J, g)$ is bounded above by $\exp( 2\log(m)r)$. 
With the assumption that $ rq \ge  \frac{ mq }{10\log(m)} = \omega(\log m)$, the union bound argument implies the statement of the lemma.
\end{proof}

As a consequence, we have 
\begin{lemma}\label{314u1-49u21-94u2}
    Assuming $ p^2n  = \omega(( \log n)^2)$ and $p\leq 0.0001$,
    the event \eqref{eq: EtypLocalSubgraphMap} 
    happens with probability $1-n^{-\omega(1)}$.
\end{lemma}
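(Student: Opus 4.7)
The plan is to reduce the statement to the previous lemma (applied to an Erd\H os--R\'enyi graph on the neighborhood of a vertex) by conditioning on the neighborhood of each fixed vertex $u\in[n]$ and exploiting the fact that, conditionally, the induced subgraph on that neighborhood is itself Erd\H os--R\'enyi.

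First I would fix $u\in [n]$ and consider the event $O_u$ that $\big||V(N_\Gamma(u))|-np\big|\le \frac{\log n}{\sqrt{np}}np$; by Bernstein's inequality \eqref{eq: BernStein} this event has probability $1-n^{-\omega(1)}$. Next, I would condition on an arbitrary realization $V(N_\Gamma(u))\setminus\{u\}=V$ with $|V|=(1+o(1))np$. Conditionally on this event, the induced subgraph $\Gamma_V$ is distributed as an Erd\H os--R\'enyi graph with parameters $m:=|V|$ and $q:=p$. Since $np^2=\omega((\log n)^2)$, we have $mq=|V|p=(1+o(1))np^2=\omega((\log |V|)^2)$, and $q=p\le 0.0001$ by assumption, so the hypotheses of the previous lemma are met.

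Applying the previous lemma to $\Gamma_V$ yields that, with conditional probability $1-m^{-\omega(1)}=1-n^{-\omega(1)}$, there exist no disjoint $I,J\subset V$ with $|I|=|J|\ge \frac{m}{10\log m}$ and no bijection $g:I\to J$ for which
\[
|\{\{v,w\}\subset I\,:\,\{v,w\},\{g(v),g(w)\}\in E(\Gamma_V)\}|\ge 0.001\binom{|I|}{2}p.
\]
It remains only to reconcile the thresholds: we want to control pairs $I,J$ with $|I|\ge np/\log n$, whereas the previous lemma restricts to $|I|\ge m/(10\log m)$. Since $m=(1+o(1))np$ and $\log m=(1+o(1))\log n$, we have $np/\log n\ge m/(10\log m)$ for large $n$, so the previous lemma's conclusion covers all $I$ we need. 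Finally, I would take the union bound over $u\in[n]$ together with the events $O_u$ to conclude that \eqref{eq: EtypLocalSubgraphMap} holds with probability $1-n^{-\omega(1)}$.

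The argument is largely mechanical; the only mild subtlety is ensuring that the conditioning on $V(N_\Gamma(u))=V\cup\{u\}$ indeed produces a bona fide Erd\H os--R\'enyi law on $\Gamma_V$ (which follows because the edges inside $V$ are independent of the edges from $u$ to $[n]\setminus\{u\}$ that determine $V$), and that the threshold conversion $np/\log n$ versus $m/(10\log m)$ is harmless. There is no substantive obstacle beyond careful bookkeeping.
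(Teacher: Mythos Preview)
Your proposal is correct and follows exactly the approach the paper intends: the paper explicitly skips the proof, noting it is analogous to the derivation of Lemma~\ref{lem: edgeFromBigPart2} from Lemma~\ref{lem: edgeFromBigPart}, which is precisely the condition-on-the-neighborhood-then-apply-the-auxiliary-lemma-and-union-bound argument you wrote out. The bookkeeping you flag (the threshold comparison $np/\log n$ versus $m/(10\log m)$, and the inequality $0.001\binom{|I|}{2}p < 0.001\cdot\frac{|I|^2}{2}p$ needed to match the two formulations of the bad event) is all in order.
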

We skip the proof since it is similar to the proof of Lemma \ref{lem: edgeFromBigPart2} via Lemma \ref{lem: edgeFromBigPart}.

\begin{lemma}\label{lem: ERCrossingedgesCounts}
Assume that $pn=\omega(\log^2 n)$. Then the event \eqref{eq: EtypEdgeToComplement}
happens with probability at least $1 - n^{-\omega(1)}$. 
\end{lemma}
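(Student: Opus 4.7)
The plan is to apply Bernstein's inequality to each fixed subset $J\subset[n]$ and then take a union bound over all subsets, exploiting the fact that the tail decays super-polynomially in $|J|$ under the assumption $np=\omega(\log^2 n)$.

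First I would fix $J\subset[n]$ with $|J|=k$ for some $1\leq k\leq n-1$ (the cases $J=\emptyset$ and $J=[n]$ being trivial since both sides of the inequality vanish). By the independence of the edge random variables defining $\Gamma$, the random variable $|E(\Gamma_{J,J^c})|$ is Binomial with parameters $k(n-k)$ and $p$, so its mean is exactly $k(n-k)p$. Setting the deviation parameter $t = \log(n)\,k\sqrt{np}$ and invoking Bernstein's inequality \eqref{eq: BernStein}, I obtain
\begin{align*}
\Prob\Big\{\big||E(\Gamma_{J,J^c})|-k(n-k)p\big|\geq t\Big\}
\leq 2\exp\!\left(-\frac{t^2/2}{k(n-k)p+t}\right).
\end{align*}
The assumption $np=\omega(\log^2 n)$ implies $\sqrt{np}\geq \log n$ for large $n$, so $t=\log(n)k\sqrt{np}\leq knp\leq k(n-k)p+knp$, which means the denominator is bounded above by $2knp$. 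Hence the probability above is at most $2\exp\!\big(-\log^2(n)k/4\big)$.

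Next, I would union bound over subsets of size $k$. Since $\binom{n}{k}\leq n^k=\exp(k\log n)$, the probability that the deviation inequality fails for some $J$ of size $k$ is at most
\begin{align*}
2\exp\!\left(k\log n-\frac{\log^2(n)\,k}{4}\right)
\leq 2\exp\!\left(-\frac{\log^2(n)\,k}{8}\right)
\end{align*}
for sufficiently large $n$. Summing this geometric series over $k\geq 1$ gives a total bound of $O\!\big(\exp(-\log^2(n)/8)\big) = n^{-\omega(1)}$, which is the desired estimate.

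Since the argument is a single application of a standard concentration inequality followed by a straightforward union bound, there is no real obstacle. The only point that requires care is checking that the Bernstein exponent dominates the entropy term $k\log n$ arising from the number of subsets; this is precisely where the hypothesis $np=\omega(\log^2 n)$ is used, giving an effective exponent of $\Omega(\log^2 n)\cdot k$ per subset, which comfortably beats the $k\log n$ union bound cost.
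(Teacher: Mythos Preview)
Your proof is correct and follows essentially the same approach as the paper's: fix $J$, apply Bernstein's inequality \eqref{eq: BernStein} with $t=\log(n)|J|\sqrt{np}$, and then take a union bound over subsets, using $np=\omega(\log^2 n)$ to ensure the Bernstein exponent $\Omega(k\log^2 n)$ beats the entropy cost $k\log n$. The only cosmetic difference is that the paper restricts to $|J|\le \lceil n/2\rceil$ (the remaining cases follow by the symmetry $E(\Gamma_{J,J^c})=E(\Gamma_{J^c,J})$ and monotonicity of the allowed deviation in $|J|$), whereas you handle all $1\le k\le n-1$ directly; both are fine.
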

\begin{proof}
    Fix an integer $1 \le r \le \lceil n/2\rceil$. Note that
    the number of subsets $J\subset [n]$ with $|J| = r$ is ${ n \choose r } \le \exp( \log(n)r)$.
    Fix for a moment any $J\subset [n]$ with $|J| = r$.
    Let $\Gamma_{J,J^c}$ denote the subgraph of $\Gamma$ containing only edges connecting
    $J$ and $J^c$. 
    Then, $|E(\Gamma_{J,J^c})|$ is a Binomial random variable with parameters $(n-r)r$ and $p$. By Bernstein's inequality for Binomial random variables \eqref{eq: BernStein}, 
    we have 
    \begin{align*}
        \Prob \Big\{
            \big| |E(G_{J,J^c})| - r(n-r)p \big| \ge t 
        \Big\}
    \le
        2\exp \bigg( - \frac{ t^2/2 }{r(n-r)p + t } \bigg).
    \end{align*}
    Choosing $ t = \sqrt{pn}\log(n)r$ and under the assumption that $  \frac{ \log^2 n }{pn} = o(1)$, we get
    $$
    \Prob \Big\{
            \big| |E(G_{J,J^c})| - r(n-r)p \big| \ge t 
        \Big\}\leq 2\exp \bigg( - \frac{ pnr^2\log^2 n }{2r(n-r)p + 2\sqrt{pn}\log(n)r} \bigg)
        =2\exp(- r\Omega(\log^2 n)),
    $$
    whence
    \begin{align*}
        \Prob \Big\{ 
            \exists J \subset [n] \mbox{ with }
            \big| |E(\Gamma_{J,J^c})| - |J|(n-|J|)p \big| \ge \sqrt{pn}\log(n)|J|  
        \Big\}
    \le 2
        \sum_{r=1}^{\lceil n/2\rceil} \exp\Big( r\log(n) - r\Omega(\log^2 n) \Big)
    = n^{-\omega(1)}.
    \end{align*}
\end{proof}

\section{Step I}\label{s:step1}

\co{
For each $\{u,v\} \in \S_2$, the number of common neighbors of $u$ and $v$ in $\Gamma$ is typically of size $(1\pm o_n(1))p^2n$.
We further recall that the notion of $(1-\varepsilon)$--focused pairs
was introduced in Definition~\ref{aifjnapifjnpfqijwnfpiqjnfpi}.}
%Given a parameter $\varepsilon\in(0,1)$,
%we say $\{u,v\}$ is {\it $(1-\varepsilon)$-focused}
%if there exists a pair $\{x,y\}$ in $S_2$ such that the set 
%$$
%    \{ i\leq n\,:\, \{u,v\} \in S_{\Gamma}(i) \, \mbox{ and }\, f_i(\{u,v\})=\{x,y\} \}
%$$
%has cardinality at least $(1-\varepsilon)p^2n$, where $S_{\Gamma}(i)$
%was introduced in Definition~\ref{aspfoiafpoifnqpfinqpfinapi}.
For any $\varepsilon >0$, let $\Event_1(\varepsilon)$ be defined as
$$ \Event_1(\varepsilon):=
\bigg\{\Big|\Big\{\{u,v\}\in \S_2:\;\mbox{the pair $\{u,v\}$ is $(1-\varepsilon)$--focused}\Big\}\Big|\geq (1-\varepsilon) {n \choose 2}
%|S_2|
\bigg\}. $$
The goal of this section is to prove
\begin{prop}\label{19741094720987098}
Let $\varepsilon=\omega(\log^{-\Cgl}n)$, with $\varepsilon\leq 1/2$, and assume $p^2n=\omega(\log^{3+4\Cgl} n)$
and $p\leq 0.0001$.
Then \co{the} probability of $\Event_1(\varepsilon)$ is at least $ 1- n^{-\omega(1)}$.
\end{prop}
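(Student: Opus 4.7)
The plan is to combine a deterministic counting step, which converts the failure of $\Event_1(\varepsilon)$ into a large family of edge constraints on $\Gamma$, with a probabilistic decoupling step that bounds the probability such a family can be satisfied.

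\textbf{Counting step.} Work on $\Event_1(\varepsilon)^c \cap \Event_{typ}$. By \eqref{eq: EtypCommonNeighbors2}, every pair of vertices has $(1\pm o(1))np^2$ common neighborhoods, and at least $\varepsilon\binom{n}{2}$ pairs $\{v,w\}$ fail to be $(1-\varepsilon)$--focused. For each such pair, the multiset $\{f_i(\{v,w\}) : i \text{ a common neighbor of } v,w\}$ has no value of multiplicity $\geq (1-\varepsilon)np^2$, so there are $\Omega(\varepsilon(np^2)^2)$ ordered pairs $(i_1,i_2)$ of common neighbors with $f_{i_1}(\{v,w\}) \ne f_{i_2}(\{v,w\})$. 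Summing over non-focused pairs and swapping the order of summation, one can fix an anchor index $i_1 \in [n]$ such that the number of \emph{coincidences} --- tuples $(\{v_1,w_1\},\{v_2,w_2\},i_2)$ with $\{v_1,w_1\}\neq\{v_2,w_2\}$, both pairs in $S_\Gamma(i_1)\cap S_\Gamma(i_2)$, and $f_{i_1}(\{v_1,w_1\}) = f_{i_2}(\{v_2,w_2\})$ --- is at least $M = \Omega(\varepsilon^2 n^2 p^2/\mathrm{polylog}(n))$. Each such coincidence forces the edge constraint
\[
\{v_1,w_1\} \in E(\Gamma) \iff \{v_2,w_2\} \in E(\Gamma),
\]
since both sides mirror the edge-status in $\tilde\Gamma$ of the common image pair.

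\textbf{Probabilistic step.} Because each $f_i$ is measurable with respect to $\Gamma$, the constraints cannot be used naively against the randomness of $\Gamma$, and a decoupling is needed. Expose a minimal piece of information --- the vertex set $V(N_\Gamma(i_1))$, the isomorphism $f_{i_1}$, and the slices of $f_{i_2}$ on the pairs in $S_\Gamma(i_1) \cap S_\Gamma(i_2)$ for each $i_2$ participating in a coincidence --- whose total number of configurations is at most $\exp(O(np\log n))$ on $\Event_{typ}$ (using $|V(N_\Gamma(i_1))| = \Theta(np)$ from \eqref{eq: EtypDegree}). Conditional on a fixed configuration, a pruning argument retains $\Omega(M)$ coincidences whose associated pairs are edge-disjoint and disjoint from all edges revealed by the exposed data. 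The surviving constraints then concern fresh, independent edge variables of $\Gamma$; each fails with probability $2p(1-p) \geq p$, so the conditional probability that all hold is at most $\exp(-\Omega(Mp)) = \exp(-\Omega(\varepsilon^2 n^2 p^3/\mathrm{polylog}(n)))$. A union bound over configurations gives
\[
\Prob\bigl(\Event_1(\varepsilon)^c \cap \Event_{typ}\bigr) \leq \exp\bigl(O(np\log n) - \Omega(\varepsilon^2 n^2 p^3/\mathrm{polylog}(n))\bigr),
\]
and under $np^2 = \omega(\log^{3+4\Cgl}n)$ and $\varepsilon = \omega(\log^{-\Cgl}n)$ the second exponent dominates, producing $n^{-\omega(1)}$. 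Combined with $\Prob(\Event_{typ}) = 1 - n^{-\omega(1)}$ from Proposition~\ref{prop: GammaTypical}, the claim follows.

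\textbf{Main obstacle.} The crux is the decoupling. Naively exposing all $f_i$'s would swamp the constraint savings, so the revealed data must be minimized while still identifying $\Omega(M)$ constraints on fresh, edge-disjoint pairs. I expect the bulk of the technical work in Section~\ref{s:step1} to lie here: one must both construct coincidences whose underlying edge variables are independent of the conditioning and maintain the quantitative count $M \geq \varepsilon^2 n^2 p^2/\mathrm{polylog}(n)$ after pruning. The threshold $p \geq n^{-1/2}\log^C n$ is calibrated precisely so that $\varepsilon^2 n^2 p^3/\mathrm{polylog}(n)$ beats $np\log n$, closing the argument.
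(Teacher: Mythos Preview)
Your high-level plan matches the paper's, and you correctly flag the decoupling as the crux. But both steps as you sketch them have gaps, and the paper's resolution differs from your single-anchor proposal.

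\textbf{Counting.} The passage from divergences to coincidences is not the pigeonhole you describe. Counting triples $(i_1,i_2,\{v,w\})$ with $f_{i_1}(\{v,w\})\neq f_{i_2}(\{v,w\})$ does not by itself produce two \emph{distinct} pairs with a \emph{common} image: a non-focused pair spreads its images, but nothing forces those images to collide with some other pair's image in a specified neighborhood. The paper does this conversion globally (Lemma~\ref{lem: IO1} and Corollary~\ref{-93871-98471098798}): via a ``most frequent preimage'' map $h$ on the $\tilde\Gamma$ side it shows that on $\Event_{typ}\cap\Event_1^c(\varepsilon)$ the set $S_{A1}$ of pairs whose image admits at least $\tfrac12\varepsilon^2np^2$ distinct preimages satisfies $|S_{A1}|\geq (1-o(1))\tfrac{\varepsilon^2}{4}\binom{n}{2}$. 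This combinatorial lemma, not a swap of summation, is where non-focused pairs become constraint-bearing pairs.

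\textbf{Decoupling.} The single-anchor scheme cannot deliver the $\exp(O(np\log n))$ configuration bound you assert. To pin down the partner $\{v_2,w_2\}$ in each coincidence you must reveal $f_{i_2}$--data for potentially all $i_2\in[n]$; even the slices on $V(N_\Gamma(i_1))\cap V(N_\Gamma(i_2))$ already cost $\exp(\Omega(n^2p^2\log n))$, which swamps the savings. The paper's mechanism is different in kind: it samples $m=\lfloor\varepsilon^{-2}p^{-2}\log^2 n\rfloor$ uniform indices and records the \emph{full} data $(i,V(N_\Gamma(i)),f_i)$ for each sampled $i$ in a class $\mathcal D^{\times m}$ of size $\exp(O(npm\log n))$. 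Lemma~\ref{lem: ISgood2} (a sampling lemma) shows that with probability $1-n^{-\omega(1)}$ these $m$ samples witness a coincidence for \emph{every} pair in $S_{A1}$, yielding $\Omega(\varepsilon^2 n^2)$ constraints --- global, not confined to one neighborhood. A connected-components argument on the constraint graph then gives compatibility probability $\exp(-\Omega(p\,\varepsilon^2 n^2))$, and $\varepsilon^4 np^2/\log^3 n=\omega(1)$ closes the union bound. The idea you are missing is to sample a carefully tuned number $m$ of neighborhoods and union-bound over their joint realizations, rather than anchor at one.
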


\bigskip

Everywhere in this section, we will implicitly assume the above conditions on $n$ and $p$.
We will show that the event $\Event_1^c(\varepsilon)$ is small under the appropriate assumptions on the parameter $\varepsilon$. 
Define a random set $$A:=\{ (i,\{v,w\})\,:\, i\in [n], \{v,w\} \in S_\Gamma(i)\}$$
(note that conditioned on $\Event_{typ}$ the set has size $(1\pm o_n(1))
{n \choose 2}
%|S_2|
p^2n$). Further, we define a subset
$A_1\subset A$  \HL{ as a collection of elements $(i,\{v,w\})$ of $A$ such that there are {\it many}
neighborhoods $N_{\Gamma}(i')$ and pairs $\{v',w'\}$ in those neighborhoods, distinct from $\{v,w\}$
but mapped to a same pair of vertices of $\tilde\Gamma$.} Specifically, 
\begin{align*}
    A_1:= \Big\{ (i,\{v,w\})& \in A \,:\;
        \big|\big\{ (i', \{v',w'\}) \in A\,:\, \{v',w'\}\neq \{v,w\} \mbox{ and }
        f_{i'}(\{v',w'\}) = f_i(\{v,w\}) \big\}\big| \ge \frac{1}{2} \varepsilon^2p^2n 
    \Big\}.
\end{align*}
%Observe that $A_1$ describes the collection of elements $(i,\{v,w\})$ of $A$ such that there are {\it many}
%neighborhoods $N_{\Gamma}(i')$ and pairs $\{v',w'\}$ in those neighborhoods, distinct from $\{v,w\}$
%but mapped to a same pair of vertices of $\tilde\Gamma$.
In the next lemma we show that, conditioned on a ``bad'' event $\Event_{typ} \cap \Event_{1}^c(\varepsilon)$,
the set $A_1$ must have a large cardinality.
Roughly speaking, the lemma asserts that if there are many not focused pairs in $\Gamma$ (those pairs
which are mapped to different pairs of $\tilde \Gamma$, depending on a neighborhood) then necessarily
the are many pairs in $\Gamma$ mapped to a same pair in $\tilde\Gamma$.
\begin{lemma} \label{lem: IO1}
Let $\varepsilon=\omega(\log^{-\Cgl}n)$, and 
condition on any realization of $\Gamma$ from $\Event_{typ} \cap \Event_{1}^c(\varepsilon)$. Then
    $$|A_1|\ge (1 - o_n(1))\frac{1}{2}\varepsilon^2p^2n
    {n \choose 2}
    %|S_2|
    .$$
\end{lemma}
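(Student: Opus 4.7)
The plan is a per-pair counting argument. Fix any non-focused $\{v,w\}$; for each $i \in I_{v,w}$, set $\tilde p_i := f_i(\{v,w\})$, and define
$R_{\tilde p} := |\{(i',\{v',w'\}) \in A : f_{i'}(\{v',w'\}) = \tilde p\}|$ and $R_{\tilde p,\{v,w\}} := |\{i' : f_{i'}(\{v,w\}) = \tilde p\}|$. Then $(i,\{v,w\}) \in A_1$ is precisely the condition $R_{\tilde p_i} - R_{\tilde p_i,\{v,w\}} \geq \tfrac12\varepsilon^2 np^2$.

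The key step is the following. The failure of the $(1-\varepsilon)$-focus condition for $\{v,w\}$ gives $R_{\tilde p_i,\{v,w\}} \leq (1-\varepsilon)np^2$ uniformly in $i \in I_{v,w}$. Hence, whenever $R_{\tilde p_i} \geq (1-o_n(1))np^2$, one has $R_{\tilde p_i} - R_{\tilde p_i,\{v,w\}} \geq (\varepsilon - o_n(1))np^2 \gg \tfrac12\varepsilon^2 np^2$ (using $\varepsilon = \omega(\log^{-\Cgl}n)$), so $(i,\{v,w\}) \in A_1$. I would then show that a suitable fraction of $i \in I_{v,w}$ satisfies $R_{\tilde p_i} \geq (1-o_n(1))np^2$; specifically, a fraction on the order of $\varepsilon$ would be enough. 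Combined with $|I_{v,w}| = (1\pm\log^{-\Cgl}n)np^2$ from \eqref{eq: EtypCommonNeighbors2} and the at least $\varepsilon{n \choose 2}$ non-focused pairs guaranteed by $\Event_1^c(\varepsilon)$, summation would yield $|A_1| \geq (1-o_n(1))\tfrac12\varepsilon^2 np^2{n \choose 2}$.

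The main technical obstacle is establishing the lower bound $R_{\tilde p_i} \geq (1-o_n(1))np^2$ for a sufficient fraction of $i \in I_{v,w}$. Since $R_{\tilde p_i}$ equals the number of common neighbors of the endpoints of $\tilde p_i$ in $\tilde \Gamma$, this is a structural statement about $\tilde\Gamma$, whose structure is only indirectly accessible through the center-preserving isomorphisms $f_i$; the event $\Event_{typ}$ controls common neighbors in $\Gamma$ but does not transfer to $\tilde\Gamma$ directly. A naive degree-based bound $R_{\tilde p_i} \leq |V(N_{\tilde\Gamma}(\tilde v))| \leq 2np$ from \eqref{eq: EtypDegree} is far too crude. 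Overcoming this obstacle should involve pulling the common-neighbor concentration from $\Gamma$ back to $\tilde\Gamma$ through the $f_i$'s, and I expect it to rely on the local edge-count bound \eqref{eq: EtypSubgraphsEdgeCounts} and the local-bijection bound \eqref{eq: EtypLocalSubgraphMap} to constrain deviations of $\tilde\Gamma$'s edge structure from $\Gamma$'s within each $1$-neighborhood.
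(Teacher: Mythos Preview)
Your proposal has a genuine gap at exactly the place you flag as the ``main technical obstacle,'' and the tools you suggest do not close it. The quantity $R_{\tilde p}$ is the number of common neighbors of the endpoints of $\tilde p$ in $\tilde\Gamma$, and nothing in $\Event_{typ}$ controls this for a general pair in $\tilde\Gamma$. The bounds \eqref{eq: EtypSubgraphsEdgeCounts} and \eqref{eq: EtypLocalSubgraphMap} are purely local to a single $1$--neighborhood; they constrain edge counts and bijection-preserved edges inside $N_{\tilde\Gamma}(i)$, but the common-neighbor count of $\tilde p$ aggregates over \emph{all} $i$, and there is no mechanism here to pull the concentration \eqref{eq: EtypCommonNeighbors2} from $\Gamma$ over to non-edge pairs of $\tilde\Gamma$. (For edge pairs one can go through $f_{\tilde v}^{-1}$, but most non-focused pairs are non-edges.) So the step ``a fraction on the order of $\varepsilon$ of $i\in I_{v,w}$ has $R_{\tilde p_i}\ge(1-o_n(1))np^2$'' is not established and, as stated, need not hold for an individual non-focused $\{v,w\}$.

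The paper avoids this obstacle by switching from a source-side to a target-side count. For each target $\{x,y\}$, pick a \emph{most frequent preimage} $h(\{x,y\})$, and set $A_2(\{x,y\}):=\{(i,\{v,w\})\in A:\ f_i(\{v,w\})=\{x,y\},\ \{v,w\}\neq h(\{x,y\})\}$. The key observation is that for \emph{any} $\{v,w\}$ mapping to $\{x,y\}$ one has $R_{\{x,y\}}-R_{\{x,y\},\{v,w\}}\ge |A_2(\{x,y\})|$, so whenever $|A_2(\{x,y\})|\ge\tfrac12\varepsilon^2np^2$, \emph{every} element of $A$ landing on $\{x,y\}$ lies in $A_1$; hence $|A_1|\ge\sum_{\{x,y\}:|A_2(\{x,y\})|>\tfrac12\varepsilon^2np^2}|A_2(\{x,y\})|$. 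One then lower-bounds $|A_2|:=\sum_{\{x,y\}}|A_2(\{x,y\})|$ using only \eqref{eq: EtypCommonNeighbors2} for $\Gamma$ and a dichotomy for each non-focused $\{v,w\}$: either $\{v,w\}\neq h(f_i(\{v,w\}))$ for at least $(\varepsilon-\log^{-\Cgl}n)np^2$ indices $i$, or $|h^{-1}(\{v,w\})|\ge 2$ (forcing a shortfall in $|\mathrm{Im}(h)|$, which contributes full weight to $A_2$ via pairs outside $\mathrm{Im}(h)$). This gives $|A_2|\ge(1-o_n(1))\varepsilon^2np^2\binom{n}{2}$, and subtracting the trivial bound $\binom{n}{2}\cdot\tfrac12\varepsilon^2np^2$ for small $A_2(\{x,y\})$ yields the lemma. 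Note that no control of $R_{\tilde p}$ in $\tilde\Gamma$ is ever needed.
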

\begin{proof}
\HL{We start by defining a map $h: \S_2 \mapsto \S_2$ which, 
for each pair of vertices $x,y$ in $\tilde\Gamma$, assigns a ``most frequent''
preimage of that pair within the collection of neighborhoods $\big\{N_{\Gamma}(i):\;S_{\tilde\Gamma}(i)\ni x,y\big\}$.
Formally, for every $\{x,y\}\in \S_2$, $h(\{x,y\})$ is chosen so that }
\begin{align*}
    &\max_{ \{v'',w''\} \in \S_2 } \big| \big\{ i\in [n]\,:\,  
     S_\Gamma(i) \ni \{v'',w''\} \mbox{ and } f_i(\{v'',w''\})=\{x,y\} \big\}\big|\\
    &\hspace{1cm}= \big| \big\{ i\in [n]\,:\, 
      S_\Gamma(i) \ni h(\{x,y\})\mbox{ and }
      f_i(h(\{x,y\}))=\{x,y\} \big\}\big|.
\end{align*} 
The map $h$ does not have to be uniquely defined; we fix any choice of $h$ satisfying the above condition.
For each $\{x,y\} \in \S_2$, we define the set 
$$A_2(\{x,y\}):= 
    \{ (i,\{v,w\}) \in A \, :\,  f_i(\{v,w\})=\{x,y\} 
        \mbox{ and } h(\{x,y\}) \neq \{v,w\}
    \}. $$
Thus, $A_2(\{x,y\})$ records indices of the neighborhoods and the pairs of vertices in those neighborhoods
which are mapped to $\{x,y\}$ but at the same time are not the ``most frequent preimage'' of $\{x,y\}$.
Note that for every $\{x,y\}$ and $\{v,w\}$ in $\S_2$, 
\co{ 
\begin{align*}
& \big|\big\{ (i, \{v',w'\}) \in A\, :\, \{v',w'\} \neq  \{v,w\}, f_{i}(\{v',w'\}) = \{x,y\} \big\}\big| \\
=& \big| \big \{ (i,\{v',w'\}) \in A\, :\, f_{i}(\{v',w'\}) = \{x, y \} \big \} \big| - 
\big| \big \{ (i, \{v,w\}) \in A\, :\, f_{i}(\{v,w\}) = \{x, y\} \big \} \big| \\
\ge & 
\big| \big \{ (i,\{v',w'\}) \in A\, :\, f_{i}(\{v',w'\}) = \{x, y \} \big \} \big| - 
\big| \big \{ (i, h(\{x,y\})) \in A\, :\, f_{i}(h(\{x,y\})) = \{x, y\} \big \} \big| \\
=& |A_2(\{x,y\})|
\end{align*}
}
(where in case $ h(\{x,y\})=\{v,w\}$ the equality holds). Consequently, 
$$
    \mbox{whenever }\{x,y\}\mbox{ satisfies }|A_2(\{x,y\})| \ge \frac{1}{2} \varepsilon^2p^2n, \mbox{ we have }  
    \{ (i,\{v,w\}) \in A\,:\, f_i(\{v,w\})=\{x,y\} \} \subset A_1. 
$$
Using the trivial bound 
$|\{ (i,\{v,w\}) \in A\,:\, f_i(v,w)=\{x,y\} \}| \ge |A_2(\{x,y\})|$, 
we can estimate $|A_1|$ in the following way: 
$$    |A_1| \ge \sum_{ \{x,y\} \,:\, |A_2(\{x,y\})| >
    \frac{\varepsilon^2}{2}p^2n } |A_2(\{x,y\})|. $$
It remains to bound the sum from below. For convenience, we define
\begin{align*}
    A_2:=&\{ (i,\{v,w\}) \in A\,:\,  \{v,w\} \neq h(f_i(\co{\{v,w\}}))\},
\end{align*}
which is the disjoint union of $\{A_2(\{x,y\})\}_{\{x,y\}\in \S_2}$.

Let $S_{\rm{NF}}$ be the collection of not $(1-\varepsilon)$--focused 
pairs of $\Gamma$. Within the event $\Event_{1}^c$, we clearly have $|S_{\rm{NF}}| \ge \varepsilon {n \choose 2}
%|S_2|
$ . 
We claim that since we have also conditioned on the event $\Event_{typ}$,
for each $\{v,w\}\in S_{\rm{NF}}$, at least one of the following must hold:
\begin{enumerate}
    \item Either
    $|\{ i \in [n] \,:\, S_{\Gamma}(i)\ni \{v,w\}, \{v,w\} \neq h(f_i(\{v,w\})) \}| \ge (\varepsilon - \log^{-\Cgl}(n)) p^2n$,
    \item Or there exist two distinct pairs $\{x,y\}, \{x',y'\} \in \S_2$ such that $h(\{x,y\})=h(\{x',y'\})=\{v,w\}$.
\end{enumerate}
\co{Indeed, to see that, suppose $\{v,w\} \in S_{\rm{NF}}$ does not satisfy
the first condition. By \eqref{eq: EtypCommonNeighbors2}, from the definition of $\Event_{typ}$
we have $| \{ i \in [n]\,:\, S_{\Gamma}(i) \ni \{v,w\} \}| \ge (1- \log^{-\Cgl} n) p^2n$, which implies 
$$|\{ i \in [n] \,:\, S_{\Gamma}(i)\ni \{v,w\}, \{v,w\} = h(f_i(\{v,w\})) \}| \ge (1- \varepsilon) p^2n. 
$$
In particular, there exists $\{x,y\}\in \S_2$ such that $h(\{x,y\})=\{v,w\}$. Now, if $\{v,w\}$ fails to satisfy the second condition, then 
$$
| \{ i \in [n]\, :\, S_{\Gamma}(i) \ni \{v,w\}, f_i(\{v,w\}) =\{x,y\} \}| =
| \{ i \in [n]\,:\, S_{\Gamma}(i) \ni \{v,w\}
, \{v,w\} = h(f_i(\{v,w\})) \}| \ge (1- \varepsilon) p^2n,
$$ which contradicts the inclusion $\{v,w\} \in S_{\rm{NF}}$, and the claim follows.
}

Let $S_{\rm{NF},1} \subset S_{\rm{NF}}$ be the subset in which the first condition holds.  \co{ For $\{v,w\} \in S_{\rm NF,1}$, we have 
$$ |\{ (i,\{v,w\})\in A_2 \}| \ge ( \varepsilon - \log^{-\Cgl}(n) ) p^2n .$$
Further, for $ \{v,w\} \in \S_2\backslash {\rm Im}(h) $, 
$$ |\{ (i,\{v,w\})\in A_2 \}| \ge 
( 1 - \log^{-\Cgl}(n) ) p^2n, $$ 
by \eqref{eq: EtypCommonNeighbors2}. Therefore,
the following bound holds for every $\{v,w\} \in \S_2$:
$$ |\{ (i,\{v,w\})\in A_2 \}| \ge 
{\bf 1}_{ S_{\rm NF,1}}(\{v,w\}) ( \varepsilon - \log^{-\Cgl}(n) ) p^2n 
+ {\bf 1}_{ \S_2 \backslash {\rm Im}(h)}(\{v,w\}) ( 1 - \varepsilon) p^2n ,
$$
and hence, 
\begin{align*}
    |A_2| \ge &
    |S_{\rm{NF},1}| \Big(\Big(\varepsilon - \log^{-\Cgl}(n)  \Big) p^2n\Big) + 
    \bigg(
    {n \choose 2}
    %|S_2|
    - |{\rm Im}(h)|\bigg) \Big(1- \varepsilon  \Big) p^2n.
\end{align*}
}
Further, we claim that ${n \choose 2}
%|S_2| 
- |{\rm Im}(h)| \ge |S_{\rm{NF}}|- |S_{\rm{NF,1}}|$ since
for every $\{v,w\} \in S_{\rm{NF}}\backslash S_{\rm{NF,1}}$ we have $|h^{-1}(\{v,w\})| \ge 2$.

Hence, 
\begin{align*}
    |A_2| \ge& |S_{\rm{NF},1}| \Big(\Big(\varepsilon - \log^{-\Cgl}(n)  \Big) p^2n\Big) + 
    (|S_{\rm{NF}}|- |S_{\rm{NF,1}}|)
    \Big(1- \co{ \varepsilon} \Big) p^2n\\
    \ge&
    |S_{\rm{NF}}|\Big(\Big(\varepsilon - \log^{-\Cgl}(n)  \Big) p^2n\Big)
    \ge 
    (1-o_n(1))\varepsilon^2p^2n {n \choose 2}
    %|S_2|
    .
\end{align*}
Since $\{A_2(\{x,y\})\}_{\{x,y\}\in \S_2}$ is a partition of 
$A_2$,
\begin{align*} 
    |A_1| \ge& \sum_{ \{x,y\} \,:\, |A_2(\{x,y\})| >
    \frac{\varepsilon^2}{2}p^2n } |A_2(\{x,y\})| 
    = |A_2| - \sum_{ \{x,y\} \,:\, |A_2(\{x,y\})| \le
    \frac{\varepsilon^2}{2}p^2n } |A_2(\{x,y\})| \\
    \ge& (1-o_n(1))\varepsilon^2p^2n{n \choose 2}
    %|S_2|  
    -{n \choose 2} 
    %|S_2| 
    \frac{\varepsilon^2}{2}p^2n
    \ge  \Big( \frac{1}{2} - o_n(1)\Big) \varepsilon^2p^2n{n \choose 2}
    %|S_2|
    .
\end{align*}
\end{proof}

Define a random set
$$S_{A1}:= \bigg\{\{v,w\} \in \S_2:  |\{i\in[n]:\; (i,\{v,w\})\in A_1\}| \ge \frac{\varepsilon^2}{4}p^2n  \bigg\}.$$
The set $S_{A1}$ can be viewed as a collection of pairs of vertices $\{v,w\}$ of $\Gamma$
such that for {\it many} neighborhoods $N_{\Gamma}(i)$ in which this pair is present,
the pair of vertices $f_i(\{v,w\})$ of $\tilde\Gamma$ has a {\it significant number} of preimages distinct from $\{v,w\}$.
As a corollary of the last lemma, we obtain
\begin{cor}\label{-93871-98471098798}
Let $\varepsilon=\omega(\log^{-\Cgl}n)$.
Then, conditioned on any realization of $\Gamma$ from $\Event_{typ} \cap \Event_1^c$,
\begin{align} \label{eq: ISgood}
 \frac{| S_{A1}|}{{n \choose 2}
 %|S_2|
 } \ge (1-o_n(1))\frac{\varepsilon^2}{4}.
\end{align}
\end{cor}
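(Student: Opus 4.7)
The plan is to prove the corollary by a standard double-counting argument: bound $|A_1|$ both from below using Lemma~\ref{lem: IO1} and from above using the very definition of $S_{A1}$, and then extract the desired lower bound on $|S_{A1}|$.

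More precisely, I would first observe that
\begin{align*}
|A_1| \;=\; \sum_{\{v,w\}\in \S_2} \big|\{i\in[n]:\; (i,\{v,w\})\in A_1\}\big|,
\end{align*}
and split the sum according to whether $\{v,w\}\in S_{A1}$ or not. For a pair $\{v,w\}\notin S_{A1}$, the definition of $S_{A1}$ gives the bound $|\{i:(i,\{v,w\})\in A_1\}|<\tfrac{\varepsilon^{2}}{4}np^{2}$. For a pair $\{v,w\}\in S_{A1}$, the trivial bound is $|\{i:(i,\{v,w\})\in A_1\}|\leq|\{i:\{v,w\}\in S_\Gamma(i)\}|$, and since we are conditioning on $\Event_{typ}$, the event \eqref{eq: EtypCommonNeighbors2} gives us the upper estimate $(1+o_n(1))np^{2}$ for this quantity. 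Combining these two bounds,
\begin{align*}
|A_1| \;\leq\; (1+o_n(1))np^{2}\,|S_{A1}| \;+\; \frac{\varepsilon^{2}}{4}np^{2}\binom{n}{2}.
\end{align*}

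On the other hand, Lemma~\ref{lem: IO1} provides the matching lower bound $|A_1|\geq(1-o_n(1))\tfrac{1}{2}\varepsilon^{2}np^{2}\binom{n}{2}$. Plugging this in, dividing through by $np^{2}$, and rearranging, I get
\begin{align*}
(1+o_n(1))|S_{A1}| \;\geq\; \binom{n}{2}\varepsilon^{2}\Big(\tfrac{1}{2}-\tfrac{1}{4}-o_n(1)\Big) \;=\; (1-o_n(1))\,\tfrac{\varepsilon^{2}}{4}\binom{n}{2},
\end{align*}
which yields \eqref{eq: ISgood}. There is no substantial obstacle here: the content of the statement lies entirely in Lemma~\ref{lem: IO1}, and the corollary is just an averaging/pigeonhole consequence that upgrades a lower bound on the total count $|A_1|$ to a lower bound on the number of pairs responsible for a non-trivial share of that count, using that no individual pair can contribute more than $(1+o_n(1))np^{2}$.
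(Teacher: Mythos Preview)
Your proposal is correct and follows essentially the same approach as the paper's proof: both use the double-counting decomposition of $|A_1|$ over pairs $\{v,w\}$, apply the trivial bound $(1+o_n(1))np^2$ from \eqref{eq: EtypCommonNeighbors2} for pairs in $S_{A1}$ and the defining bound $\tfrac{\varepsilon^2}{4}np^2$ for pairs outside it, and combine with the lower bound on $|A_1|$ from Lemma~\ref{lem: IO1}. The only cosmetic difference is that the paper writes $\binom{n}{2}-|S_{A1}|$ for the number of pairs outside $S_{A1}$ rather than your looser $\binom{n}{2}$, but this has no effect on the conclusion.
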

\begin{proof}
By Lemma~\ref{lem: IO1}
and in view of the condition
$$\HL{\forall\; \{v,w\} \in \S_2,\, |\{ i \in [n]\, :\, S_{\Gamma}(i) \ni \{v,w\}\}| \le (1+o_n(1))p^2n,}$$
we have 
\begin{align*}
(1 - o_n(1))\frac{1}{2}\varepsilon^2p^2n{n \choose 2}
%|S_2| 
\le 
    |A_1| \le  |S_{A1}| p^2n(1+o_n(1)) + \bigg({n \choose 2}
    %|S_2|
    - |S_{A1}|\bigg) \frac{\varepsilon^2}{4}p^2n,
\end{align*}
which implies \eqref{eq: ISgood}.
\end{proof}

%\HL{Whenever the set $S_{A1}$ is large, one can prove by probabilistic method that there is a small set of neighborhoods in $\Gamma$ that contains many pairs $\{v,w\}$ and $\{v',w'\}$ with $\{v,w\}\neq \{v',w'\}$ such that $f_i(\{v,w\})= f_{i'}(\{v',w'\})$ for some $i$ and $i'$. Returning to the proof overview from the introduction, there are many constraints on the edges of the graph $\Gamma$, involving the pair $\{v,w\}$.
%}

\HL{Whenever the set $S_{A1}$ is large, one can prove by the probabilistic method that there are many pairs $\{v,w\}$, $\{v',w'\}$ with $\{v,w\}\neq \{v',w'\}$ such that $f_i(\{v,w\})= f_{i'}(\{v',w'\})$ for some $i$ and $i'$. 
In fact, for every pair $\{v,w\}\in S_{A1}$, by sampling $\omega(\varepsilon^{-2}p^{-2}\log n)$
indices from $[n]$ uniformly and independently from each other, with a high probability
we will hit a pair of neighborhoods $N_{\tilde\Gamma}(i)$ and $N_{\tilde \Gamma}(i')$ both containing a pair 
 $\{\tilde v,\tilde w\}$
 with $f_{i}^{-1}(\{\tilde v,\tilde w\})=\{v,w\}$ and $f_{i'}^{-1}(\{\tilde v,\tilde w\})\neq\{v,w\}$.
 That is, by sampling $\omega(\varepsilon^{-2}p^{-2}\log n)$ indices
 we are very likely to find a constraint on the edges of the graph $\Gamma$, involving the pair $\{v,w\}$. Returning to the proof overview from the introduction, large $S_{A1}$ means there are many constraints on the edges of the graph $\Gamma$.}
We refer to Figure~\ref{f:fig2} for a visualization of the idea.
The next lemma makes the remark about sampling the indices rigorous.

\begin{figure}[h]
\caption{An illustration of Step I of the proof. The left column represents pairs of vertices of the graph $\Gamma$,
and the right column --- pairs of vertices of $\tilde\Gamma$.
Every line connecting a pair $\{v,w\}$ on the left with a pair $\{\tilde v,\tilde w\}$ on the right represents the action of an isomorphism $f_i$,
for some neighborhood $N_\Gamma(i)$ containing $\{v,w\}$.
Since we are always condition on a ``typical'' realization of $\Gamma$, the number of lines emanating
from each of the pairs of $\Gamma$ is $(1\pm o(1))p^2n$. The main observation is that existence
of many non-focused pairs of $\Gamma$ (such as pair $\{1,2\}$ or pair $\{2,4\}$ in this illustration)
necessarily implies existence of many right ``meeting points'' where many lines emanating from distinct
left pairs meet. In this illustration, the right pair $\{2,3\}$ represents one of the ``meeting points''.
Such a configuration of lines necessarily corresponds to a constraint on the edges of $\Gamma$.
In this example, we have the constraint on the pairs $\{1,4\}$, $\{2,3\}$, $\{2,4\}$, namely,
either all three pairs must be edges of $\Gamma$ or none of them is.
The main principle is
$$
\hspace{-1cm}\mbox{Many non-focused pairs $=$ many constraints},
$$
whereas many constraints can be simultaneously satisfied only with a very small probability.
Thus, $\Prob\{\Event_1^c(\varepsilon)\}$ must be small.}

\centering  

\subfigure
{
\begin{tikzpicture}[every node/.style={rectangle,thick,draw}]
    \node (1) at (-2,4) {\{1,2\}};
    \node (2) at (-2,3) {\{1,3\}};
    \node (3) at (-2,2) {\{1,4\}};
    \node (4) at (-2,1) {\dots};
    \node (5) at (-2,0) {\{2,3\}};
    \node (6) at (-2,-1) {\{2,4\}};
    \node (7) at (-2,-2) {\dots};
    \node (8) at (-2,-3) {\{n-1,n\}};
    \node (9) at (2,4) {\{1,2\}};
    \node (10) at (2,3) {\{1,3\}};
    \node (11) at (2,2) {\{1,4\}};
    \node (12) at (2,1) {\dots};
    \node (13) at (2,0) {\{2,3\}};
    \node (14) at (2,-1) {\{2,4\}};
    \node (15) at (2,-2) {\dots};
    \node (16) at (2,-3) {\{n-1,n\}};
    \draw[blue, very thick] (1) to [bend left=30] (9);
    \draw[blue, very thick] (1) to [bend left=25] (9);
    \draw[blue, very thick] (1) to [bend left=30] (10);
    \draw[blue, very thick] (1) to [bend left=25] (10);
    \draw[blue, very thick] (2) to [bend left=30] (11);
    \draw[blue, very thick] (2) to [bend left=25] (11);
    \draw[blue, very thick] (2) to [bend left=20] (11);
    \draw[blue, very thick] (2) to [bend left=15] (11);
    \draw[blue, very thick] (3) to [bend left=30] (13);
    \draw[blue, very thick] (3) to [bend left=25] (13);
    \draw[blue, very thick] (3) to [bend left=20] (13);
    \draw[blue, very thick] (3) to [bend left=15] (13);
    \draw[blue, very thick] (5) to [bend left=30] (13);
    \draw[blue, very thick] (5) to [bend left=25] (13);
    \draw[blue, very thick] (5) to [bend left=20] (13);
    \draw[blue, very thick] (5) to [bend left=15] (13);
    \draw[blue, very thick] (6) to [bend left=20] (13);
    \draw[blue, very thick] (6) to [bend left=15] (13);
    \draw[blue, very thick] (6) to [bend left=20] (14);
    \draw[blue, very thick] (6) to [bend left=15] (14);
    \draw[blue, very thick] (8) to [bend left=25] (16);
    \draw[blue, very thick] (8) to [bend left=20] (16);
    \draw[blue, very thick] (8) to [bend left=15] (16);
    \draw[blue, very thick] (8) to [bend left=30] (16);
\end{tikzpicture}
}
\label{f:fig2}
\end{figure}

\begin{lemma} \label{lem: ISgood2}
Let $\varepsilon=\omega(\log^{-\Cgl}n)$, and condition on any realization of $\Gamma$ from $\Event_{typ} \cap \Event_1^c$.
Let $m =\omega(\varepsilon^{-2}p^{-2}\log n) $ be an integer, and  
    let $X_1,\dots, X_m$ be i.i.d
    random variables uniformly distributed on $[n]$.
    Then, 
    \begin{align*} 
       \Prob_{X_1,\dots, X_m} \Big\{ 
            \forall\; \{v,w\} \in S_{A1}\;\;\; &\mbox{there are distinct indices } i, j \in [m] \mbox{ and a pair } \{v',w'\} \neq \{v,w\}\\
            &\mbox{such that } f_{X_i}(\{v,w\}) = f_{X_j}(\{v',w'\})
            \Big\} 
       = 1- n^{-\omega(1)}.
    \end{align*} 
\end{lemma}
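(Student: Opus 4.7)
The plan is to fix a single pair $\{v,w\}\in S_{A1}$, show that the ``collision'' event stated in the lemma fails for this pair with probability $n^{-\omega(1)}$ over the sample $X_1,\dots,X_m$, and then take a union bound over the at most $\binom{n}{2}$ pairs in $S_{A1}$. All randomness here is in the $X_i$'s, since $\Gamma$ has been fixed in the conditioning.

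Next I would extract the two combinatorial inputs that make the sampling argument work. From the definition of $S_{A1}$, the set $I_{v,w}:=\{i\in[n]\,:\,(i,\{v,w\})\in A_1\}$ satisfies $|I_{v,w}|\ge \frac{1}{4}\varepsilon^2 np^2$. Moreover, for any $i\in I_{v,w}$, the definition of $A_1$ gives at least $\frac{1}{2}\varepsilon^2 np^2$ pairs $(i',\{v',w'\})\in A$ with $\{v',w'\}\neq\{v,w\}$ and $f_{i'}(\{v',w'\})=f_i(\{v,w\})$. Since each $f_{i'}$ is a bijection of $V(N_\Gamma(i'))$ onto $V(N_{\tilde\Gamma}(i'))$, the pair $\{v',w'\}$ is determined uniquely by $i'$ and the target $f_i(\{v,w\})$. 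Hence the set $J_i:=\{i'\in[n]\,:\,\exists\,\{v',w'\}\neq\{v,w\} \text{ with } f_{i'}(\{v',w'\})=f_i(\{v,w\})\}$ also has cardinality at least $\frac{1}{2}\varepsilon^2 np^2$ for every $i\in I_{v,w}$.

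The sampling argument then proceeds in two stages, using a partition of $[m]$ into disjoint halves $H_1$ and $H_2$, each of size $\omega(\varepsilon^{-2}p^{-2}\log n)$. First, I use $\{X_a\}_{a\in H_1}$ to hit $I_{v,w}$: the probability that none of them lies in $I_{v,w}$ is at most $\bigl(1-\frac{\varepsilon^2 p^2}{4}\bigr)^{|H_1|}\le \exp\bigl(-\Omega(\varepsilon^2 p^2|H_1|)\bigr)=n^{-\omega(1)}$. Conditioned on this event, let $a^\ast$ be the smallest index in $H_1$ with $X_{a^\ast}\in I_{v,w}$. Because $H_1\cap H_2=\emptyset$, the variables $\{X_b\}_{b\in H_2}$ are independent of $a^\ast$, and the probability that none of them lies in $J_{X_{a^\ast}}$ is again at most $\exp\bigl(-\Omega(\varepsilon^2 p^2|H_2|)\bigr)=n^{-\omega(1)}$. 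Whenever both hits occur, taking $i:=a^\ast$, $j:=b$ (some $b\in H_2$ with $X_b\in J_{X_{a^\ast}}$), and $\{v',w'\}:=f_{X_b}^{-1}(f_{X_{a^\ast}}(\{v,w\}))$ yields distinct indices $i\neq j$ and a pair $\{v',w'\}\neq\{v,w\}$ with $f_{X_i}(\{v,w\})=f_{X_j}(\{v',w'\})$. A final union bound over the $\le \binom{n}{2}$ pairs in $S_{A1}$ absorbs the polynomial factor into the $n^{-\omega(1)}$ bound.

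I do not anticipate any serious obstacle. The only point that requires some care is the lower bound $|J_i|\ge \frac{1}{2}\varepsilon^2 np^2$, which rests on the fact that each $f_{i'}$ is injective, so the $\frac{1}{2}\varepsilon^2 np^2$ pairs in the definition of $A_1$ are distinguished by distinct indices $i'$ rather than merely distinct $\{v',w'\}$. The disjointness $H_1\cap H_2=\emptyset$ is what gives $i\neq j$ for free and allows the conditional independence used above; a single-batch argument would have to deal with the diagonal term $i=j$ separately, but this is avoided by the two-stage setup.
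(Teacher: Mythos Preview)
Your proof is correct and follows essentially the same approach as the paper: split the sample into two halves, use the first half to hit $I_{v,w}$ (via the definition of $S_{A1}$), use injectivity of the $f_{i'}$ to lower-bound $|J_i|$ from the definition of $A_1$, use the second half to hit $J_{X_{a^\ast}}$, and conclude with a union bound over $S_{A1}$. The only cosmetic difference is that the paper takes the union bound over $S_{A1}$ separately at each stage rather than at the very end.
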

\begin{proof}
Let $m':=\lfloor m/2\rfloor$.
Let $X_{1},X_{2},\dots, X_{m'}$ and $Y_1,Y_2,\dots, Y_{m'}$
be two independent sets of $m'$ i.i.d random variables uniformly distributed on $[n]$. 
It is sufficient to show that the event 
$$
    \Event_{I,X,Y}:=\{
        \forall\; \{v,w\} \in S_{A1} \;\;\; 
        \exists\; i,j \in [m'] \mbox{ and }  \{v',w'\} \neq \{v,w\}
         \mbox{ such that } f_{X_i}(\{v,w\}) = f_{Y_j}(\{v',w'\})
    \}
$$
satisfies 
$$
    \Prob( \Event_{I,X,Y}) = 1 - n^{-\omega(1)}.
$$

By definition of $S_{A1}$, for any $\{v,w\}\in S_{A1}$
$$ \Prob_{X'}\{ (X',\{v,w\}) \in A_1\} \ge \frac{\varepsilon^2 p^2n}{4n} = \frac{\varepsilon^2p^2}{4},$$
where $X'$ is a random variable uniformly distributed on $[n]$. 

Let $\Event_{I,X}$ be the event
$$
    \Event_{I,X} := \{ \forall\; \{v,w\} \in S_{A1}\;\;\; \exists j\; \in [m'] \mbox{ such that } (X_j,\{v,w\})\in A_1 \}.
$$
Note that
\begin{align}
\begin{split}
 \Prob( \Event_{I,X}^c) =&  \Prob\{ \exists\; \{v,w\} \in S_{A1}\;\;\;
\forall \;i\in [m'],\;\; (X_i,\{v,w\}) \notin A_1\} \\ \le&
|S_{A1}|
\Big(1 -  \frac{\varepsilon^2p^2}{4}\Big)^{m'}  
\le \exp \Big( 2\log(n)- \frac{\varepsilon^2p^2 m'}{4}\Big). \label{eq: IEIX}
\end{split}
\end{align}

By assuming $m' = \omega(\log(n)\varepsilon^{-2}p^{-2})$,
the event $\Event_{I,X}^c$ holds with probability $n^{-\omega(1)}$.
Next, fix an $m'$--tuple $(x_1,\dots, x_{m'})\in[n]^{m'}$ (a realization of $X_1,\dots,X_{m'}$)
such that $\Event_{I,X}$ holds.
For each $ \{v,w\} \in S_{A1}$, we pick $x_{v,w} \in \{x_i\}_{i=1}^m $ 
so that $ ( x_{v,w}, \{v,w\}) \in A_1$ and let 
$$U( \{v,w\}):= \{ i \in [n] \,:\, \exists \{v',w'\} \in S_\Gamma(i) \backslash \{v,w\} \mbox{ so that } f_{i}(\{v',w'\}) = f_{x_{v,w}}(\{v,w\})\}.$$
Notice that $f_i$ is an injective map, and thus 
$$|U(\{v,w\})| = | \{ (i, \{v',w'\}) \in A\,:\, \{v',w'\} \neq \{v,w\} \mbox{ and } f_{i}(\{v',w'\}) = f_{x_{v,w}}(\{v,w\}) \}|
\ge \frac{1}{2} \varepsilon^2p^2n,$$
where the last inequality follows from $(x_{v,w},\{v,w\}) \in A_1$ and the definition of $A_1$. Thus, for every $\{v,w\} \in S_{A1}$, 
$\Prob\{ Y' \in U(\{v,w\})\} \ge \frac{\varepsilon^2}{2p^2}$, where $Y'$ is a random variable uniformly distributed on $[n]$.

Similar to the upper estimate of $\Prob(\Event_{I,X}^c)$, our choice of $m'$ implies 
\begin{align}
 \Prob ( \Event_{I,X,Y}^c | \Event_{I,X})
\le n^{-\omega(1)}. \label{eq: IEIXY}
\end{align}
Combining \eqref{eq: IEIX} and \eqref{eq: IEIXY}, we get 
$$
    \Prob(\Event_{I,X,Y}) = 1- n^{-\omega(1)}.
$$
\end{proof}

\bigskip

The last lemma essentially means that conditioned on a realization of the graph $\Gamma$ from
$\Event_{typ} \cap \Event_1^c$ and sampling uniformly at random from the set of indices,
we are able to find a large number (namely, of order $|S_{A1}|$, in turn estimated from below as $\Omega(\varepsilon^2 n^2)$
in Corollary~\ref{-93871-98471098798}) of constraints on the graph $\Gamma$.
Those \co{ constraints} are ``spread'' over just a little more than $\log(n)\varepsilon^{-2}p^{-2}$
neighborhoods, yielding just a little less than $\varepsilon^4 n^2p^2/\log n$ constraints per neighborhood.
Simultaneous occurrence of such a number of constraints in $\Gamma$ happens with probability
$\exp(-\Omega(\varepsilon^4 n^2p^2/{\rm polylog }(n)))$.
On the other hand, as it has been mentioned in the introduction, the total number of choices of the {\it sets} of neighbors
both in $\Gamma$ and $\tilde\Gamma$ for a given vertex is of order $\exp(\Theta(pn\log n))$. 
In the regime $p\geq n^{-1/2}{\rm polylog}(n)$ the \co{inverse}
probability of constraints' satisfaction
beats the number of choices of the adjacent vertices, implying the required statement.
In what follows, we make the argument rigorous.
\begin{proof}[Proof of Proposition~\ref{19741094720987098}]
Let $\cal{D}$ be the collection of data structures of the following type:
$$ (i, \{i_1,i_2,\dots, i_k\}, g_i:\{i_1,\dots, i_k\} \mapsto [n]),$$
where $k$ may take any value in the interval $[\frac{2}{3} pn,\frac{4}{3} pn]$;
where $i\in [n]$; $\{i_1,\dots, i_k\}$ is any $k$--subset of $[n]$,
and $g_i$ is any mapping from $\{i_1,\dots, i_k\}$ to $[n]$.
Similarly to the isomorphisms $f_i$, we will be working with the natural extensions
of the mappings $g_i$ to sets of pairs of vertices.
Elements of $\cal{D}$ are meant to provide a partial description of 
the structure of corresponding $1$--neighborhoods of $\Gamma$ and $\tilde\Gamma$.
Since the graphs are random, a given structure from $\cal{D}$
may or may not accurately describe the respective $1$--neighborhoods, depending
on a realization of $\Gamma$ and $\tilde\Gamma$.
Given an element $D=
(i, \{i_1,i_2,\dots, i_k\}, g_i:\{i_1,\dots, i_k\} \mapsto [n])
\in \cal{D}$, we say that $\Gamma$ is {\it $D$--compatible} if 
\begin{enumerate}
\item $\{i_1,\dots, i_k\}$ is the set of all vertices in $\Gamma$ adjacent to $i$, and
\item $\forall \ell \in [k],   f_i(i_\ell)=g_{i}(i_\ell)$.
\end{enumerate}
Set $m:=\lfloor\varepsilon^{-2}p^{-2}\log^2 n\rfloor$, and let $\cal{D}^{\times m}$ be the $m$--fold Cartesian
product of $\cal{D}$:
$$\cal{D}^{\times m}= \{(D(1),\dots, D(m))\,:\, \forall t \in [m],\, D(t) \in \cal{D}\}.$$
Given an $m$--tuple $D^m\in \cal{D}^{\times m}$, we say that
$\Gamma$ is {\it $D^m$--compatible} if $\Gamma$
is compatible with all $m$ elements of $D^m$.

The proof of the proposition is accomplished by identifying a
special subset $\cal{D}_*^{\times m}$
of elements of $\cal{D}^{\times m}$ such that, on the one hand,
conditioned on $\Event_1^c(\varepsilon) \cap \Event_{typ}$
the graph $\Gamma$ is compatible with one of the structures in $\cal{D}_*^{\times m}$
with probability close to one whereas, on the other hand,
the unconditional probability
$$
\Prob\{\mbox{$\Gamma$ is $D^m$--compatible for some structure }D^m\in \cal{D}_*^{\times m}\}$$
is close to zero. The collection $\cal{D}_*^{\times m}$ thus
would correspond to ``non-typical'' realizations of $\Gamma,\tilde\Gamma$.

For each $D^m = (D(1),\dots, D(m)) \in \cal{D}^{ \co \times m}$, let 
$S(D^m) \subset \S_2$ be the set 
\begin{align*}
    S(D^m):= \bigg\{ \{v,w\}\,:\, &\exists\; t_1,t_2 \in [m] \mbox{ and } \{v',w'\} \in \S_2 \backslash \{\{v,w\}\} \\ 
    &\mbox{ such that } g_{i(t_1)}(\{v,w\}) = g_{i(t_2)}(\{v',w'\}) \bigg\}
\end{align*}
(in the above definition, we implicitly require that
$ v,w \in \{i_1(t_1),\dots, i_{k(t_1)}(t_1)\}$ and $v',w' \in \{i_1(t_2),\dots, i_{k(t_2)}(t_2)\}$). Then we define $\cal{D}_*^{\times m}$ as
the subset of $\cal{D}^{\times m}$ containing those structures
$D^m$ with $|S(D^m)| \ge \frac{\varepsilon^2}{5}{n \choose 2}
$.

Next, we will estimate the probability (with respect to the randomness of $\Gamma$)
that $\Gamma$ is $D^m$--compatible for a given element $D^m$ of $\cal{D}_*^{\times m}$.
Consider an auxiliary graph $G(D^m)$ with the vertex set $S(D^m)$ and such that
$ ( \{v,w\} , \{v',w'\})$ is an edge iff there exist $t_1\neq t_2$
with $\co{g_{i(t_1)}(\{v,w\})= g_{i(t_2)}(\{v',w'\})}$.

\begin{figure}[h]\label{adksjfnapifunw4fpoiqwnf}
\co{ 
\caption{The auxiliary graph $G(D^{10})$ for the data
structure $D^{10}$ corresponding to the pair of graphs $\Gamma$, $\tilde\Gamma$
from Figure~\ref{f:f1}, such that $\Gamma$ is $D^{10}$--compatible. We assume that $D^{10}=(D(1),\dots, D(10))$,
where for each $t\leq 10$, $i(t):=t$, $g_t:=f_t$, and the domain of $g_t$
coincides with that of $f_t$.
In this example, the graph $G(D^{10})$ has only one connected component
consisting of two points $\{2,4\}$ and $\{7,9\}$.}
%Each connected component of $G(D^m)$ is a clique. In this example, there exists $t_5,t_6,t_7$ and $t_8$ such that $g_{t_5}(\{v_5,w_5\})
%= g_{t_6}(\{v_6,w_6\}) = g_{t_7}(\{v_7,w_7\})= g_{t_8}(\{v_8,w_8\})$.

\subfigure{
\begin{tikzpicture}[every node/.style={rectangle,thick,draw}]
\node (1) at (-2,0) {$\{2,4\}$};
\node (2) at (2,0) {$\{7,9\}$};
 %   \node (1) at (-2,1) {$\{v_1,w_1\}$};
  %  \node (2) at (-2,0) {$\{v_2,w_2\}$};
   % \node (3) at (-1, 2) {$\{v_3,w_3\}$}; 
%    \node (4) at (0,-1) {$\{v_4,w_4\}$};
 %   \node (5) at (2, 2) {$\{v_5, w_5\}$};
  %  \node (7) at (3, 0) {$\{v_7, w_7\}$};
%    \node (8) at (5,0) {$\{v_8, w_8\}$};
 %   \node (9) at (7,1) {$\cdots$};
  %  \node (10) at (-4,1) {$\cdots$};
 %   \draw[blue, very thick] (1) to [bend left=30] (2);
  %  \draw[blue, very thick] (3) to [bend left=30] (4);
   % \draw[blue, very thick] (5) to (6);
 %    \draw[blue, very thick] (5) to [bend left=8] (7);
  %   \draw[blue, very thick] (5) to [bend left=8] (8);
   %   \draw[blue, very thick] (6) to [bend left=8] (7);
%     \draw[blue, very thick] (6) to [bend left=8] (8);
 %    \draw[blue, very thick] (7) to  (8);
 \draw[blue, very thick] (1) to [bend left=8] (2);
\end{tikzpicture}
}
%\label{f:fig3} \\
}
\end{figure}
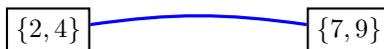

By definition of $S(D^m)$, none of the vertices of $G(D^m)$ are isolated. 
Let $G_1,\dots, G_r$ be the connected components of $G(D^m)$. 
The event that $\Gamma$ is $D^m$--compatible is contained in the event that
for every $G_s$ with $s \in [r]$,
either $ V(G_s) \subset E(\Gamma)$ or $V(G_s) \cap E(\Gamma) = \emptyset$
(see Figure~\ref{adksjfnapifunw4fpoiqwnf}).
The probability (with respect to the randomness of $\Gamma$) for a given index $s\in[r]$ can be estimated as
$$
    \Prob\{ V(G_s) \subset E(\Gamma)\mbox{ or } V(G_s) \cap E(\Gamma) = \emptyset\} = (1-p)^{|V(G_s)|} + p^{|V(G_s)|}
    \le \exp( - cp|V(G_s)| )
$$
for some universal constant $c\in (0,1)$.
Therefore, by independence,
$$ \Prob\{ \Gamma  \mbox{ is $D^m$--compatible}\} 
\le \prod_{s \in [r]}  \exp( - cp|V(G_s)| )
\le \exp( - cp|S(D^m)|) \le \exp \Big( -cp\frac{\varepsilon^2}{5} {n \choose 2}
%|S_2| 
\Big).$$
A rough estimate gives
\begin{align*}
   |\cal{D}^{\times m}|= |\cal{D}|^m 
   \le  (pn\cdot n^{4pn} )^m
   \le \exp( 5\log(n)pnm).
\end{align*}
Thus, by the union bound we have 
$$ \Prob\{\mbox{$\Gamma$ is $D^m$--compatible for some structure }D^m \in \cal{D}_*^{\times m} \}  \le 
\exp \Big( 5\log(n)pnm -c'p\varepsilon^2n^2  \Big),
$$
for some universal constant $c'>0$.

\co{In view of the conditions
$m \le \varepsilon^{-2}p^{-2}\log^2n$ and $\varepsilon = \omega( \log^{-\Cgl}(n))$, 
$$
\frac{ c' p \varepsilon^2 n^2 }{ 5 \log (n) pnm } 
=  \frac{c' \varepsilon^2 n}{5 \log (n)m }
\ge \frac{c' \varepsilon^4 p^2n}{5 \log^3 (n) }
= \omega( \log^{-4\Cgl-3}(n)p^2n) = \omega(1),
$$
where the last equality follows from our assumption that $p^2n = \omega( \log^{3+4\Cgl}(n))$. 
Hence,}
$$ \Prob\{\mbox{$\Gamma$ is $D^m$--compatible for some structure }D^m \in \cal{D}_*^{\times m} \}  \le n^{-\omega(1)}.
$$

On the other hand, by \eqref{eq: ISgood} and Lemma~\ref{lem: ISgood2}, 
$$ \Prob\{\mbox{$\Gamma$ is $D^m$--compatible for some structure }D^m\in \cal{D}_*^{\times m} \,\vert\, \Event_1^c(\varepsilon) \cap \Event_{typ}
\}  = 1- n^{-\omega(1)}, 
$$
which implies $\Prob(\Event_1^c(\varepsilon)\cap \Event_{typ}) = n^{-\omega(1)}$. 
The result follows.
\end{proof}

\section{Step II}\label{s:step2}
\co{In this step, we show that with high probability, there exists
a bijection $\pi :[n] \mapsto [n]$ such that for most vertices
$z$ of $\Gamma$, we have
$f_i(z)=\pi(z)$ for at least an
$(1-\varepsilon)$--fraction of neighbors $i$ of $z$ in $\Gamma$. The permutation
$\pi$ thus can be viewed as an ``approximate'' graph isomorphism between $\Gamma$
and $\tilde \Gamma$.}

Let $\varepsilon>0$.
We define \co{the} event
\begin{align*}
\Event_2(\varepsilon):=
\big\{&\mbox{There is a permutation $\pi:[n]\to[n]$ with }|\{i\in [n]:\, \{i,z\}\in E(\Gamma),\,% V(N_\Gamma(i))\setminus\{i\}\ni z,\;
f_i(z)=\pi(z)
\}|\geq (1-\varepsilon)pn\\
&\mbox{for at least $(1-\varepsilon)n$ vertices $z\in[n]$}\big\}.
\end{align*}

The main statement of the section is
\begin{prop}\label{138413-4982174-29847}
There is a universal constant %$C_{\ref{138413-4982174-29847}}>0$
$C_2>0$ with the following property.
Let $n$ be large, let $\varepsilon\in[\log^{-\Cgl}n,1/2]$, and assume that $p^2n\geq \log^{\Cgl}n$.
Then the event $\Event_1(\varepsilon)\cap \Event_{typ}$
is contained in the event $\Event_2(C_2%C_{\ref{138413-4982174-29847}}
\varepsilon^{1/3})$.
\end{prop}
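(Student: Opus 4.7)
The plan is to build a permutation $\pi$ for which, at most vertices $z$, the value $f_i(z)$ coincides with $\pi(z)$ for almost all $i \sim_\Gamma z$. I will set a threshold $\delta := \varepsilon^{2/3}$ and declare $z$ to be \emph{vertex-focused} if at most $\delta(n-1)$ of the pairs $\{z,w\}$ (with $w\neq z$) are \emph{not} $(1-\varepsilon)$-focused; a double-counting plus Markov argument applied to the conclusion of Proposition~\ref{19741094720987098} shows that the set $G$ of vertex-focused vertices has $|G| \ge (1-\varepsilon/\delta)n = (1-\varepsilon^{1/3})n$. The exponent $2/3$ is chosen to balance the two sources of error: the number of bad vertices on the one hand, and the per-vertex concentration rate on the other.

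For $z \in G$, write $\psi_z(i) := f_i(z)$ for $i \sim z$ and let $B_z$ be the set of ordered pairs $(i_1,i_2)$ of distinct $\Gamma$-neighbors of $z$ with $\psi_z(i_1) \ne \psi_z(i_2)$. I will estimate the number of triples $(w, i_1, i_2)$ with $\{z,w\}$ being $(1-\varepsilon)$-focused and $i_1, i_2 \in I(\{z,w\})$ in two ways. On the one hand, summing $|I(\{z,w\})|(|I(\{z,w\})|-1)$ over the at least $(1-\delta)(n-1)$ focused pairs containing $z$ produces a lower bound of $(1-O(\delta))\,n^3 p^4$. On the other hand, for any ordered pair $(i_1, i_2)$ of neighbors of $z$, a witness $w$ must be a common neighbor of $i_1$ and $i_2$, so the contribution to the triple count is at most $(1+o_n(1))\,np^2$ by \eqref{eq: EtypCommonNeighbors2}, and $\Event_{typ}$ bounds the number of ordered pairs by $(1+o_n(1))(np)^2$. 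The critical refinement is that when $(i_1,i_2) \in B_z$, the contribution is at most $1$: the condition $\{f_{i_1}(z), f_{i_1}(w)\} = \{f_{i_2}(z), f_{i_2}(w)\}$ combined with $\psi_z(i_1) \neq \psi_z(i_2)$ forces $f_{i_1}(w) = f_{i_2}(z)$, which is a fixed vertex of $\tilde\Gamma$, and hence determines $w$ uniquely by injectivity of $f_{i_1}$. This is precisely the quantitative form of the triple-compatibility idea sketched in the introduction. Comparing the two estimates (and using $np^2 \ge \log^{\Cgl}n \gg 1$) yields $|B_z| \le O(\delta)(\deg z)^2$. Writing $|B_z| = (\deg z)^2 - \sum_j n_j^2$ for the frequency vector $(n_j)$ of $\psi_z$, one then gets $\max_j n_j \ge \sum_j n_j^2 / \deg(z) \ge (1 - O(\varepsilon^{2/3}))\,np$. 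Define $\pi(z)$ to be this dominant value.

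For injectivity of $\pi|_G$ I use the sets $S_z := \{i \sim z : f_i(z) = \pi(z)\}$, which have size $(1 - O(\varepsilon^{2/3}))\,np$. Because each $f_i$ is a bijection, $S_{z_1}$ and $S_{z_2}$ must be disjoint whenever $\pi(z_1) = \pi(z_2)$ with $z_1 \neq z_2$. Moreover $S_z \subseteq V(N_{\tilde\Gamma}(\pi(z))) \setminus \{\pi(z)\}$, whose cardinality equals $\deg_\Gamma(\pi(z)) = (1+o_n(1))np$ (the identity of degrees follows from $N_\Gamma(v)$ and $N_{\tilde\Gamma}(v)$ being isomorphic with fixed point for every $v$, and $\Event_{typ}$ controls the value). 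Therefore $2(1 - O(\varepsilon^{2/3}))np \le (1+o_n(1))np$, a contradiction for $\varepsilon$ below a universal constant; hence $\pi|_G$ is injective, and one extends $\pi$ arbitrarily to a bijection $[n]\to[n]$. With the dominance rate $1-O(\varepsilon^{2/3}) \ge 1 - O(\varepsilon^{1/3})$ at the $\ge (1-\varepsilon^{1/3})n$ vertices of $G$, this proves $\Event_2(C_2 \varepsilon^{1/3})$ for an appropriate $C_2$. The range of $\varepsilon$ above the universal threshold is easily handled by choosing $C_2$ so that $C_2 \varepsilon^{1/3} > 1$, making $\Event_2$ tautological.

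The main obstacle is establishing the unit contribution of bad pairs $(i_1,i_2) \in B_z$, which promotes the intuitive triple-compatibility observation ($f_{i_1}(z)=f_{i_2}(z)=\tilde b$ when two focuses share a single vertex) into a quantitative counting bound strong enough to overcome the $(1+o(1))np^2$ per-pair contribution enjoyed by good pairs; everything else is a careful averaging and an essentially trivial injectivity check via pigeonhole on the sets $S_z$.
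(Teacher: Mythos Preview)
Your proof is correct and shares the paper's central combinatorial observation: whenever $f_{i_1}(z)\neq f_{i_2}(z)$, at most one vertex $w$ can have $\{z,w\}$ mapped to the same (focus) pair by both $f_{i_1}$ and $f_{i_2}$. The implementations differ. The paper first runs a cube-root averaging (Lemma~\ref{1-83471-94871987}) to extract, for at least $(1-C''\varepsilon^{1/3})n$ vertices $z$, at least $(1-C''\varepsilon^{1/3})np$ neighbors $i$ with $|M_z(i)|\ge(1-C''\varepsilon^{1/3})np$; it then builds a multigraph on $J_z$ with edge multiplicities $|M_z(i_1)\cap M_z(i_2)|$ and lower bounds the total multiplicity via Jensen (Lemma~\ref{193847194872109487}). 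You instead apply a single Markov step (losing $\varepsilon^{1/3}$ on the vertex count and $\varepsilon^{2/3}$ on the per-vertex pair count) and count triples $(w,i_1,i_2)$ directly; your lower bound follows from $|I(\{z,w\})|\ge(1-\varepsilon)np^2$ without any Cauchy--Schwarz, and your triple count is exactly the multigraph total multiplicity written from the $w$ side rather than the $(i_1,i_2)$ side. This is a bit leaner --- no auxiliary multigraph or Jensen step --- and yields the sharper per-vertex bound $|B_z|\le O(\varepsilon^{2/3})(\deg z)^2$ rather than $O(\varepsilon^{1/3})$, though the gain is washed out by the $\varepsilon^{1/3}n$ loss on the vertex count. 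Your injectivity argument via disjointness of the witness sets $S_{z_1},S_{z_2}$ inside $V(N_{\tilde\Gamma}(\pi(z)))$ is also different from, and somewhat more direct than, the paper's image-size estimate for $\pi'$.
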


First, we will restate the conditions described by $\Event_1(\varepsilon)$, in a more technical
yet more useful form. Namely, we will show that the condition that many vertex pairs of $\Gamma$
are $(1-\varepsilon)$--focused implies that there are many vertices $z$ which are elements of {\it many}
$(1-\varepsilon)$--focused
pairs in {\it many} neighborhoods \co{(see Definition~\ref{aifjnapifjnpfqijwnfpiqjnfpi}
for the notion of $(1-\varepsilon)$--focused
pairs).}
\begin{lemma}\label{1-83471-94871987}
There is a universal constant $C''>0$ with the following property.
Let $n$ be sufficiently large, and assume that $\varepsilon\in[\log^{-\Cgl}n,1/2]$.
Then, conditioned on any realization of $\Gamma$ from
$\Event_1(\varepsilon)\cap \Event_{typ}$,
there are at least $(1-C'' \varepsilon^{1/3})n$ vertices $z\in[n]$ such that
\begin{align*}
\big|\big\{i\snb{\Gamma}z:
%\in[n]\setminus\{z\}:\;&z\in V(N_\Gamma(i)),
\;&|\{v\in V(N_\Gamma(i))\setminus\{i,z\}:\;
\{v,z\}\mbox{ is $(1-\varepsilon)$--focused and is mapped to its focus \co{by $f_i$}}\}|\\
&\geq (1-C''\varepsilon^{1/3})pn\big\}\big|
\geq (1-C''\varepsilon^{1/3})pn.
\end{align*}
\end{lemma}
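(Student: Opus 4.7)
The plan is a double averaging argument combined into a single counting step so as to achieve the cubic-root exponent rather than a weaker fourth-root bound. For every ordered pair $(i,z)$ with $i\snb{\Gamma}z$, set
\[b(i,z):=\big|\{v\in V(N_\Gamma(i))\setminus\{i,z\}:\;\{v,z\}\text{ is $(1-\varepsilon)$--focused and $f_i$ maps $\{v,z\}$ to its focus}\}\big|,\]
and let $T:=\sum_{(i,z):\,i\snb{\Gamma}z}b(i,z)$. First I would bound $T$ from below by swapping the order of summation: for each $(1-\varepsilon)$--focused pair $\{v,z\}$, Definition~\ref{aifjnapifjnpfqijwnfpiqjnfpi} supplies at least $(1-\varepsilon)np^2$ indices $i$ at which $f_i$ maps $\{v,z\}$ to its focus, and each such $i$ contributes $1$ to both $b(i,z)$ and $b(i,v)$. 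Since $\Event_1(\varepsilon)$ provides at least $(1-\varepsilon)\binom{n}{2}$ focused pairs, this yields $T\geq 2(1-\varepsilon)\binom{n}{2}(1-\varepsilon)np^2\geq(1-O(\varepsilon))n^3p^2$. Conversely, \eqref{eq: EtypDegree} gives $b(i,z)\leq |N_\Gamma(i)|-2\leq(1+o_n(1))np$ and $\sum_{(i,z):\,i\snb{\Gamma}z}1\leq(1+o_n(1))n^2p$, so $T\leq(1+o_n(1))n^3p^2$. Writing $M(i,z):=|N_\Gamma(i)|-2$ for the pointwise maximum of $b$, the deficit $D:=\sum_{(i,z)}\bigl(M(i,z)-b(i,z)\bigr)$ is therefore at most $O(\varepsilon)n^3p^2$.

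Next, fix $\delta:=C\varepsilon^{1/3}$ for a sufficiently large absolute constant $C$, and call an ordered pair $(i,z)$ with $i\snb{\Gamma}z$ \emph{bad} if $b(i,z)<(1-\delta)np$. Each bad pair contributes at least $(\delta-o_n(1))np$ to $D$, so the number of bad pairs is at most $O(\varepsilon/\delta)n^2p=O(\varepsilon^{2/3})n^2p$. Call a vertex $z$ \emph{bad} if at least $\delta np$ of its $\Gamma$--neighbors $i$ yield a bad pair $(i,z)$; then the total count of bad pairs is at least the number of bad vertices times $\delta np$, which forces the number of bad vertices to be at most $O(\varepsilon/\delta^2)n=O(\varepsilon^{1/3})n$. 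For every non-bad vertex $z$, using $|N_\Gamma(z)|\geq(1-o_n(1))np$ from $\Event_{typ}$, at least $(1-\delta-o_n(1))np\geq(1-C''\varepsilon^{1/3})np$ of the neighbors $i\snb{\Gamma}z$ satisfy $b(i,z)\geq(1-\delta)np\geq(1-C''\varepsilon^{1/3})np$, which is exactly the inner condition of the lemma for any $C''\geq C+1$; the hypothesis $\varepsilon\geq\log^{-\Cgl}n$ ensures that $\delta$ dominates the $o_n(1)$ losses (since $\varepsilon^{1/3}\geq\log^{-\Cgl/3}n\gg\log^{-\Cgl}n$).

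The main technical subtlety is carrying out the counting in a single combined step. Two successive Markov-type inequalities, one for bad pairs and then one for bad vertices, would only produce an $O(\varepsilon^{1/4})n$ bound; attributing the deficit $D$ directly to bad vertices through the nested implication ``a bad vertex forces many bad pairs, and each bad pair contributes to $D$'' is what produces the improved cubic-root exponent required by the statement.
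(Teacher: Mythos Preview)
Your proof is correct and follows essentially the same counting argument as the paper: both bound the triple sum $T=\sum_{(i,z)}b(i,z)$ from below via the supply of focused pairs and from above via the degree bounds in $\Event_{typ}$, and then convert the resulting deficit $D=O(\varepsilon)n^3p^2$ into a bound of $O(\varepsilon/\delta^2)n$ on the number of bad vertices, optimized at $\delta=\Theta(\varepsilon^{1/3})$; the paper simply merges your two Markov steps into one inequality per bad vertex. Your closing paragraph, however, is mistaken: what you actually carried out \emph{is} two successive Markov-type inequalities with a common threshold $\delta$, and it already yields the $\varepsilon^{1/3}$ exponent---there is no separate ``combined step'' trick, and a naive two-step argument does not degrade to $\varepsilon^{1/4}$.
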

\begin{proof}
The proof is accomplished with a simple a counting argument combined with the definition of $\Event_{typ}$.
For every $z\in [n]$, let $I_z:=V(N_\Gamma(z))\setminus\{z\}$ be the set of neighbors of $z$ in $\Gamma$;
let $T\subset \S_2$ be the set of $(1-\varepsilon)$--focused pairs, and given an ordered triple $(z,i,v)$,
let ${\bf 1}_{(z,i,v)}$ be the indicator of the expression
$$
i\in I_z,\;v\in V(N_\Gamma(i))\setminus\{i,z\},\;\{z,v\}\mbox{ is $(1-\varepsilon)$--focused
and is mapped to its focus \co{by $f_i$}}.
$$
Using the definition of $\Event_1(\varepsilon)$, we get
\begin{equation}\label{1-39871-294821-984}
\sum_{z=1}^n\sum_{i\in I_z}\sum_{v\in V(N_\Gamma(i))\setminus\{i,z\}}{\bf 1}_{(z,i,v)}
\geq (1-\varepsilon)^2p^2n\cdot n(n-1).
\end{equation}
Let the constant $C''>0$ be chosen later.
Let $U$ be the set of \co{vertices} $z\in[n]$ satisfying the assertion of the lemma.
\co{For every $z\in U^c$, let $\tilde I_z$ be the subset of all indices $i\in I_z$
with
$$|\{v\in V(N_\Gamma(i))\setminus\{i,z\}:\;\{z,v\}\mbox{ is $(1-\varepsilon)$--focused
and is mapped to its focus \co{by $f_i$}}\}|\geq (1-C''\varepsilon^{1/3})pn.$$
Note that $|\tilde I_z|\leq (1-C''\varepsilon^{1/3})pn$, by the definition of $U^c$.}
%Note that for every $z\in U^c$ there are {\it at most} $(1-C''\varepsilon^{1/3})np$ indices %$i\in I_z$ with
%$|\{v\in V(N_\Gamma(i))\setminus\{i,z\}:\;\{z,v\}\mbox{ is $(1-\varepsilon)$--focused
%and is mapped to its focus \co{by $f_i$}}\}|\geq (1-C''\varepsilon^{1/3})np$.
On the other hand, in view of the definition of $\Event_{typ}$, we have
$|I_z|\leq (1+\log^{-\Cgl}n)pn$ and $|V(N_\Gamma(i))|\leq (1+\log^{-\Cgl}n)pn$ for all $z,i\in [n]$.
Consequently,
\begin{align*}
\sum_{z\in U^c}\sum_{i\in I_z}\sum_{v\in V(N_\Gamma(i))\setminus\{i,z\}}{\bf 1}_{(z,i,v)}
&\leq\co{ \sum_{z\in U^c}\sum_{i\in \tilde I_z}(1+\log^{-\Cgl}n)np
+\sum_{z\in U^c}\sum_{i\in I_z\setminus\tilde I_z}(1-C''\varepsilon^{1/3})pn}
\\
&=\co{|U^c|\Big(|\tilde I_z|(1+\log^{-\Cgl}n)pn+(|I_z|-|\tilde I_z|)(1-C''\varepsilon^{1/3})pn\Big)}\\
&\leq
|U^c|\bigg((1-C''\varepsilon^{1/3})pn(1+\log^{-\Cgl}n)np
+(C''\varepsilon^{1/3} pn+pn\log^{-\Cgl}n)(1-C''\varepsilon^{1/3})pn\bigg),
\end{align*}
whence
\begin{align*}
\sum_{z=1}^n\sum_{i\in I_z}\sum_{v\in V(N_\Gamma(i))\setminus\{i,z\}}{\bf 1}_{(z,i,v)}
&\leq \co{\sum_{z\in U}|I_z|\,\max\limits_{i\in[n]}|V(N_\Gamma(i)|+
\sum_{z\in U^c}\sum_{i\in I_z}\sum_{v\in V(N_\Gamma(i))\setminus\{i,z\}}{\bf 1}_{(z,i,v)}}\\
&\leq |U|\big((1+\log^{-\Cgl}n)pn\big)^2
+|U^c|\big(n^2p^2+2n^2p^2\log^{-\Cgl}n-(C'')^2\varepsilon^{2/3} n^2p^2 \big)\\
&\leq n^3p^2+5n^3p^2\log^{-\Cgl}n-(C'')^2\varepsilon^{2/3} n^2p^2|U^c|.
\end{align*}
It is easy to see that if $|U^c|\geq C''\varepsilon^{1/3}n$ and assuming $C''$ is sufficiently
large, the last inequality would contradict \eqref{1-39871-294821-984}.
The result follows.
\end{proof}

\bigskip

Before moving on to the proof of Proposition~\ref{138413-4982174-29847}, we consider the following lemma.
\begin{lemma}\label{193847194872109487}
Let $m\geq 1$, and let $U_1,U_2,\dots,U_\ell$ be subsets of $[m]$.
Then
$$
\sum_{i,j\in[\ell]}|U_i\cap U_j|\geq\frac{\co{( \sum_{i\in[\ell]}|U_i|)^2}}{m}.
$$
\end{lemma}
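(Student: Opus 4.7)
The plan is to prove this by a standard double-counting argument combined with the Cauchy--Schwarz inequality applied to the indicator vectors of the sets $U_i$.

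First, for each element $k\in[m]$, I will introduce the ``degree'' $d_k:=|\{i\in[\ell]\,:\,k\in U_i\}|$, which counts the number of the subsets containing $k$. By switching the order of summation (that is, writing $|U_i|=\sum_{k\in[m]}{\bf 1}_{k\in U_i}$), we immediately get
$$
\sum_{i\in[\ell]}|U_i|=\sum_{k\in[m]}d_k.
$$
A similar double counting applied to $|U_i\cap U_j|=\sum_{k\in[m]}{\bf 1}_{k\in U_i}{\bf 1}_{k\in U_j}$ yields
$$
\sum_{i,j\in[\ell]}|U_i\cap U_j|=\sum_{k\in[m]}d_k^2.
$$

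The proof is then finished by Cauchy--Schwarz applied to the vector $(d_k)_{k\in[m]}$ and the all-ones vector of length $m$:
$$
\Big(\sum_{k\in[m]}d_k\Big)^2\leq m\sum_{k\in[m]}d_k^2.
$$
Combining the three displays yields the claimed inequality. There is no real obstacle here; the statement is essentially the content of the Cauchy--Schwarz inequality for the ``incidence matrix'' between $[\ell]$ and $[m]$, and the only care needed is to correctly track that the double sum on the left runs over all ordered pairs $(i,j)\in[\ell]^2$ (including $i=j$), which matches the identity $\sum_{i,j}|U_i\cap U_j|=\sum_k d_k^2$ without any correction factors.
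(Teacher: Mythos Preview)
Your proof is correct and essentially identical to the paper's: the paper introduces a uniform random variable $\xi$ on $[m]$, writes $\sum_{i,j}|U_i\cap U_j|=m\,\Exp\big(\sum_i{\bf 1}_{\xi\in U_i}\big)^2$, and applies Jensen's inequality, which is exactly your Cauchy--Schwarz step in probabilistic language (your $d_k$ is the value of $\sum_i{\bf 1}_{\xi\in U_i}$ at $\xi=k$).
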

\begin{proof}
Let $\xi$ be a random variable uniformly distributed in $[m]$,
so that
$$
\sum_{i,j\in[\ell]}|U_i\cap U_j|=\sum_{i,j\in[\ell]}m\cdot \Exp\,{\bf 1}_{\{\xi\in U_i\cap U_j\}}
=m\,\Exp\,\bigg(\sum_{i,j\in[\ell]}{\bf 1}_{\{\xi\in U_i\}}\cdot {\bf 1}_{\{\xi\in U_j\}}\bigg)
=m\,\Exp\,\bigg(\sum_{i\in[\ell]}{\bf 1}_{\{\xi\in U_i\}}\bigg)^2.
$$
Applying Jensen's inequality, we then get
$$
\sum_{i,j\in[\ell]}|U_i\cap U_j|\geq m\,\bigg(\Exp\,\sum_{i\in[\ell]}{\bf 1}_{\{\xi\in U_i\}}\bigg)^2
=\frac{\co{( \sum_{i\in[\ell]}|U_i|)^2}}{m}.
$$
\end{proof}

\bigskip

\begin{proof}[Proof of Proposition~\ref{138413-4982174-29847}]
Let $\varepsilon$ satisfy the assumptions of Lemma~\ref{1-83471-94871987},
and let $C''$ be the constant from the lemma.
Condition on any realization of $\Gamma$ in $\Event_1(\varepsilon)\cap \Event_{typ}$, and 
fix any point $z\in [n]$ satisfying the assertion of Lemma~\ref{1-83471-94871987}.
Let $J_z$ be the set of all neighbors $i$ of $z$ in $\Gamma$ such that
$$
|\{v\in V(N_\Gamma(i))\setminus\{i,z\}:\;
\{v,z\}\mbox{ is $(1-\varepsilon)$--focused and is mapped to its focus \co{by $f_i$}}\}|\\
\geq (1-C''%_{\ref{1-83471-94871987}}
\varepsilon^{1/3})pn.
$$
Note that $|J_z|\geq (1-C''%_{\ref{1-83471-94871987}}
\varepsilon^{1/3})pn$.
Denote
$$
M_z(i):=\big\{v\in V(N_\Gamma(i))\setminus\{i,z\}:\;
\{v,z\}\mbox{ is $(1-\varepsilon)$--focused and is mapped to its focus \co{by $f_i$}}\big\},\quad i\in J_z.
$$
We construct an auxiliary multigraph $G_z$ with the vertex set $J_z$,
such that for every pair of distinct elements $i_1,i_2\in J_z$, the edge between $i_1$ and $i_2$
has multiplicity equal to the size of the intersection
$M_z(i_1)\cap M_z(i_2)$ (see Figure~\ref{aldkjfnafijnwefijwfijwnij}).

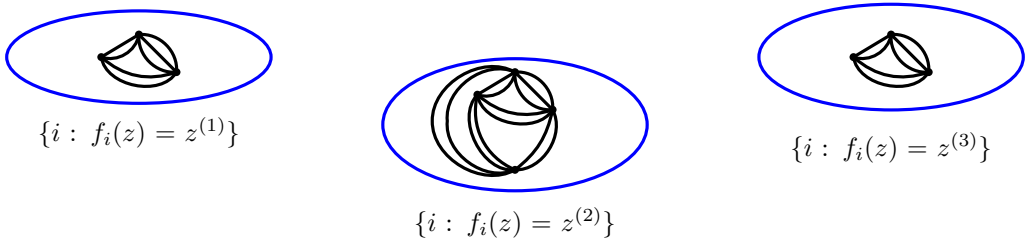
\begin{figure}[h]\label{aldkjfnafijnwefijwfijwnij}
\caption{An illustration of the auxiliary graph $G_z$ used
to estimate from above the size of the set of pairs of indices $i_1\neq i_2$ from $J_z$
with $f_{i_1}(z)\neq f_{i_2}(z)$ (denoted by $Q$ in the proof of Proposition~\ref{138413-4982174-29847}).
The graph $G_z$ splits into communities in accordance with the values of $f_i(z)$.
In this illustration, there are three communities $\{i:\,f_i(z)=z^{(\ell)}\}$, $\ell=1,2,3$.
Any pair of vertices from different communities are either not adjacent
or are connected by an edge of multiplicity one.
Further, any pair of vertices within a same community are connected by a multiedge
of multiplicity at most $(1+\log^{-\Cgl}n)p^2n$,
in view of conditioning on $\Event_{typ}$.}

\centering  

\subfigure
{
\begin{tikzpicture}%[every node/.style={ellipse,thick,draw}]
    \usetikzlibrary{arrows}
    \usetikzlibrary{shapes}
    \tikzstyle{every node}=[ellipse, minimum width=100pt,
    align=center]
    \draw  node[fill,circle,inner sep=0pt,minimum size=3pt] at (-5,0.5) {};
    \draw  node[fill,circle,inner sep=0pt,minimum size=3pt] at (-4.5,0) {};
    \draw  node[fill,circle,inner sep=0pt,minimum size=3pt] at (-5.5,0.2) {};
    \draw[black, very thick] (-5,0.5) to [bend left=50] (-4.5,0);
    \draw[black, very thick] (-5,0.5) to [bend right=50] (-4.5,0);
    \draw[black, very thick] (-5,0.5) to (-4.5,0);
    \draw[black, very thick] (-5.5,0.2) to (-5,0.5);
    \draw[black, very thick] (-5.5,0.2) to [bend right=50] (-5,0.5);
    \draw[black, very thick] (-5.5,0.2) to [bend right=30] (-4.5,0);
    \draw[black, very thick] (-5.5,0.2) to [bend right=70] (-4.5,0);
    \node[blue, draw=blue, very thick, minimum height=35pt] (1) at (-5,0.2) {};
    \node[text width=3cm] at (-5,-0.8) {$\{i:\,f_i(z)=z^{(1)}\}$};
    \draw  node[fill,circle,inner sep=0pt,minimum size=3pt] at (0,0) {};
    \draw  node[fill,circle,inner sep=0pt,minimum size=3pt] at (0.5,-0.5) {};
    \draw  node[fill,circle,inner sep=0pt,minimum size=3pt] at (-0.5,-0.3) {};
    \draw  node[fill,circle,inner sep=0pt,minimum size=3pt] at (0,-1.3) {};
    \draw[black, very thick] (0,0) to [bend left=50] (0.5,-0.5);
    \draw[black, very thick] (0,0) to [bend right=50] (0.5,-0.5);
    \draw[black, very thick] (0,0) to (0.5,-0.5);
    \draw[black, very thick] (-0.5,-0.3) to (0,0);
    \draw[black, very thick] (-0.5,-0.3) to [bend right=50] (0,0);
    \draw[black, very thick] (-0.5,-0.3) to [bend right=30] (0.5,-0.5);
    \draw[black, very thick] (-0.5,-0.3) to [bend right=70] (0.5,-0.5);
    \draw[black, very thick] (-0.5,-0.3) to [bend right=40] (0,-1.3);
    \draw[black, very thick] (-0.5,-0.3) to [bend right=70] (0,-1.3);
    \draw[black, very thick] (0,0) to  [bend right=60] (-1.1,-0.65) to [bend right=60] (0,-1.3);
    \draw[black, very thick] (0,0) to  [bend right=60] (-0.9,-0.65) to [bend right=60] (0,-1.3);
    \draw[black, very thick] (0.5,-0.5) to [bend left=20] (0,-1.3);
        \draw[black, very thick] (0.5,-0.5) to [bend left=60] (0,-1.3);
    \node[blue, draw=blue, very thick, minimum height=50pt] (2) at (0,-0.7) {};
    \node[text width=3cm] at (0,-2) {$\{i:\,f_i(z)=z^{(2)}\}$};
    \draw  node[fill,circle,inner sep=0pt,minimum size=3pt] at (5,0.5) {};
    \draw  node[fill,circle,inner sep=0pt,minimum size=3pt] at (5.5,0) {};
    \draw  node[fill,circle,inner sep=0pt,minimum size=3pt] at (4.5,0.2) {};
    \draw[black, very thick] (5,0.5) to [bend left=50] (5.5,0);
    \draw[black, very thick] (5,0.5) to [bend right=50] (5.5,0);
    \draw[black, very thick] (5,0.5) to (5.5,0);
    \draw[black, very thick] (4.5,0.2) to (5,0.5);
    \draw[black, very thick] (4.5,0.2) to [bend right=50] (5,0.5);
    \draw[black, very thick] (4.5,0.2) to [bend right=30] (5.5,0);
    \draw[black, very thick] (4.5,0.2) to [bend right=70] (5.5,0);
    \node[blue, draw=blue, very thick, minimum height=40pt] (3) at (5,0.2) {};
    \node[text width=3cm] at (5,-1) {$\{i:\,f_i(z)=z^{(3)}\}$};
\end{tikzpicture}
}

\end{figure}

Thus, the total multiplicity of the edges of $G_z$ is given by
$$
\frac{1}{2}\sum_{i_1\neq i_2,\;i_1,i_2\in J_z}|M_z(i_1)\cap M_z(i_2)|,
$$
where the sets $M_z(i)$ satisfy
$$
\sum_{i\in J_z}|M_z(i)|\geq |J_z|(1-C''%_{\ref{1-83471-94871987}}
\varepsilon^{1/3})pn.
$$
Applying Lemma~\ref{193847194872109487}, we immediately get
that the total multiplicity of the edges of $G_z$ is at least
\begin{align}
\frac{1}{2}(1-C''%_{\ref{1-83471-94871987}}
\varepsilon^{1/3})^2p^2n|J_z|^2-\frac{1}{2}\sum_{i\in J_z}|M_z(i)|
&\geq \frac{1}{2}(1-C''%_{\ref{1-83471-94871987}}
\varepsilon^{1/3})^2p^2n|J_z|^2
-\frac{1}{2}|J_z|(1+\log^{-\Cgl}n)pn.\label{aojfnapeifunpfiwunfpqijfnap}
\end{align}

Next, we will get an upper bound on the total multiplicity.
Note that the size of the intersection $M_z(i_1)\cap M_z(i_2)$ (hence the multiplicity
of any multiedge in $G_z$) does not exceed $(1+\log^{-\Cgl}n)p^2n$,
in view of the definition of $\Event_{typ}$.
Further, whenever $f_{i_1}(z)\neq f_{i_2}(z)$, the multiedge between $i_1$ and $i_2$
can only have multiplicity zero or one. Indeed, otherwise we would be able to find two distinct
vertices $v,w\in M_z(i_1)\cap M_z(i_2)$, such that both $\{v,z\}$ and $\{w,z\}$ are mapped
to their respective focuses by both $f_{i_1}$ and $f_{i_2}$.
If $f_{i_1}(z)=\tilde z$ and $f_{i_2}(z)=\hat z$ for some $\tilde z\neq \hat z$ then necessarily $f_{i_1}(\{v,z\})
=f_{i_2}(\{v,z\})=\{\tilde z,\hat z\}$, and, similarly, $f_{i_1}(\{w,z\})
=f_{i_2}(\{w,z\})=\{\tilde z,\hat z\}$. But this is impossible since $v,w,z$ must be mapped to three distinct vertices
of $\tilde \Gamma$ (both by $f_{i_1}$ and $f_{i_2}$).

Thus, if we denote by $Q$
the subset of all $2$--subsets $\{i_1,i_2\}$ of $J_z$ such that $f_{i_1}(z)\neq f_{i_2}(z)$
then the total multiplicity of multiedges of $G_z$ can be estimated from above by
$$
|Q|+(1+\log^{-\Cgl}n)p^2n\bigg(\frac{1}{2}|J_z|(|J_z|-1)-|Q|\bigg)
\leq \frac{1}{2}|J_z|(|J_z|-1)(1+\log^{-\Cgl}n)p^2n-|Q|p^2n.
$$
Together with the lower bound \co{\eqref{aojfnapeifunpfiwunfpqijfnap}}, this implies the inequality
\begin{align*}
|Q|
&\leq \frac{1}{2}|J_z|^2(1+\log^{-\Cgl}n)+\frac{1}{2p}|J_z|(1+\log^{-\Cgl}n)-\frac{1}{2}(1-C''%_{\ref{1-83471-94871987}}
\varepsilon^{1/3})^2|J_z|^2\\
&\leq 
\co{\frac{1}{2}\,n^2p^2(1+\log^{-\Cgl}n)^3
+\frac{1}{2}\,n(1+\log^{-\Cgl}n)^2-\frac{1}{2}(1-C''%_{\ref{1-83471-94871987}}
\varepsilon^{1/3})^2(1+\log^{-\Cgl}n)^2n^2p^2,}
\end{align*}
which in turn gives, \co{as $p^2n\varepsilon^{1/3}=\Omega(1)$,}
$$
|Q|\leq C\varepsilon^{1/3}n^2p^2
$$
for some universal constant $C>0$.
Thus, there is a vertex $\pi'(z):=\tilde z$ of $\tilde \Gamma$ such that $f_i(z)=\pi'(z)$
for at least $(1-C'\varepsilon^{1/3})pn$ indices $i\in J_z$.

\medskip

The above argument produced a mapping $\pi':[n]\to[n]$ such that for at least $(1-C'\varepsilon^{1/3})n$ vertices
$z\in[n]$, we have $f_i(z)=\pi'(z)$ for at least $(1-C'\varepsilon^{1/3})pn$ indices $i\in J_z$.
The last step of the proof is to ``convert'' the mapping $\pi'$ to a permutation $\pi$ of $[n]$
with similar properties.
\co{We have, in view of the conditions on $\pi'$,
%Let $W$ be a maximal subset of $[n]$ on which the mapping $\pi'$ is injective, that is $W$
%is a subset such that for every $z\in W^c$, $\pi'(z)\in \pi'(W)$.
%Assume for a moment that $|W^c|> 2C'\varepsilon^{1/3}n$. By the definition of $\pi'$,
%for at least $C'\varepsilon^{1/3}n$ vertices $z\in W^c$ we have $f_i(z)=\pi'(z)$ for at least $(1-C'\varepsilon^{1/3})np$ indices $i\in J_z$. Hence, the sum of the degrees of vertices $\pi'([n])$ of $\tilde \Gamma$ can be bounded from below as
$$
\sum_{\tilde w\in \pi'([n])}\sum_{i=1}^n\sum_{v\snb{\Gamma}i
%\in V(N_\Gamma(i))\setminus\{i\}
}{\bf 1}_{\{f_i(v)=\tilde w\}}
=\sum_{i=1}^n\sum_{v\snb{\Gamma}i
%\in V(N_\Gamma(i))\setminus\{i\}
}{\bf 1}_{\{f_i(v)\in\pi'([n])\}}
=\sum_{z=1}^n\sum_{i\snb{\Gamma}z%\in V(N_\Gamma(z))\setminus\{z\}
}{\bf 1}_{\{f_i(z)\in\pi'([n])\}}
\geq (1-C'\varepsilon^{1/3})^2 n^2p.
$$
On the other hand, we know that each vertex of $\Gamma$, hence $\tilde\Gamma$, has degree at most $(1+\log^{-\Cgl}n)pn$
on $\Event_{typ}$, whence
$$
\sum_{i=1}^n\sum_{v\snb{\Gamma}i
}{\bf 1}_{\{f_i(v)=\tilde w\}}\leq (1+\log^{-\Cgl}n)pn
$$
for every $\tilde w\in \pi'([n])$.
Thus,
$$
(1-C'\varepsilon^{1/3})^2 n^2p\leq (1+\log^{-\Cgl}n)pn |\pi'([n])|,
$$
implying that $|\pi'([n])|\geq (1-\tilde C\varepsilon^{1/3})n$ for some constant $\tilde C>0$.
Let $W$ be a maximal subset of $[n]$ on which the mapping $\pi'$ is injective, that is $W$
is a subset such that for every $z\in W^c$, $\pi'(z)\in \pi'(W)$.
From the above, $|W|\geq (1-\tilde C\varepsilon^{1/3})n$.
Choose any permutation $\pi$ on $[n]$ such that $\pi(z)=\pi'(z)$ for all $z\in W$.
Then %This implies that we can find a permutation $\pi$ on $[n]$ such that
$\pi(z)=\pi'(z)$ for at least
$(1-\tilde C\varepsilon^{1/3})n$ points $z\in[n]$.}
It is easy to check that $\pi$ satisfies the required properties.
\end{proof}

\section{Step III}\label{s:step3}

The goal of this section is to show that with high probability the permutation from event $\Event_2(\varepsilon)$
\co{can be chosen to be} close to the identity.
Let $\varepsilon>0$.
We define the event
\begin{align*}
\Event_3(\varepsilon):=
\bigg\{\sum_{i\in[n]}|\{z\snb{\Gamma}i:\;f_i(z)=z
\}|\geq (1-\varepsilon)n^2p\bigg\}.
\end{align*}

\begin{prop}\label{0983701984721-94871-}
There is a universal constant $C_3%C_{\ref{0983701984721-94871-}}
>0$ with the following property.
Let $n$ be large and let \co{$p =\omega\big( \frac{\log^{2\Cgl}n}{n}\big)$ and $\max\{2ep, \log^{-\Cgl} n\}\le \varepsilon \le \frac{1}{2}$. }
Then
$$
\Prob(\Event_3(C_3%C_{\ref{0983701984721-94871-}}
\varepsilon)\cap \Event_{typ})
\geq \Prob(\Event_2(\varepsilon)\cap \Event_{typ})-n^{-\omega(1)}.$$
\end{prop}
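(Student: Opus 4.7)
The plan is to first establish that $\Event_3(C_3\varepsilon)$ follows whenever $\Event_2(\varepsilon)$ is witnessed by a permutation $\pi$ close to the identity in Hamming distance, and then to argue that witnessing permutations far from the identity have negligible probability.

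For each permutation $\pi$ of $[n]$, I would set
\[
\Event_2(\varepsilon,\pi):=\Big\{\big|\big\{z\in[n]:\,|\{i\snb{\Gamma}z:f_i(z)=\pi(z)\}|\ge(1-\varepsilon)np\big\}\big|\ge (1-\varepsilon)n\Big\},
\]
so that $\Event_2(\varepsilon)=\bigcup_\pi\Event_2(\varepsilon,\pi)$. Switching the order of summation between the indices $i$ and the vertices $z$ inside the defining sum of $\Event_3$ shows that if the number $d:=|\{z:\pi(z)\ne z\}|$ of non-fixed points of $\pi$ is at most a small constant multiple of $\varepsilon n$, then $\Event_2(\varepsilon,\pi)\cap\Event_{typ}\subset \Event_3(C_3\varepsilon)$ automatically. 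Hence the proof reduces to proving
\[
\Prob\Big(\bigcup_{\pi\,:\,d_H(\pi,\mathrm{id})>c_0\varepsilon n}\Event_2(\varepsilon,\pi)\cap\Event_{typ}\Big)=n^{-\omega(1)}
\]
for an appropriately chosen constant $c_0>0$.

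The next step extracts a family of edge constraints on $\Gamma$ from any far-from-identity witness $\pi$, implementing the outline given in the introduction. Denote the ``good'' vertex set $Z_\pi:=\{z:|\{i\snb{\Gamma}z:f_i(z)=\pi(z)\}|\ge(1-\varepsilon)np\}$, so $|Z_\pi|\ge(1-\varepsilon)n$. For an edge $\{v,w\}\in E(\Gamma)$ with $v,w,\pi^{-1}(v),\pi(w)\in Z_\pi$ and some mild typicality, the equality $f_v(w)=\pi(w)$ produces $\{v,\pi(w)\}\in E(\tilde\Gamma)$, and then $f_{\pi(w)}^{-1}(v)=\pi^{-1}(v)$ produces $\{\pi^{-1}(v),\pi(w)\}\in E(\Gamma)$. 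Using the degree and common-neighbor bounds from $\Event_{typ}$, the set of exceptional edges on which this propagation can fail is of size $o(\varepsilon n^2 p)$. Since the number of unordered pairs $\{v,w\}$ with $\{\pi^{-1}(v),\pi(w)\}\ne\{v,w\}$ is $\Omega(dn)=\Omega(\varepsilon n^2)$ when $d\ge c_0\varepsilon n$ (the only excluded pairs are those where $\pi$ fixes both $v,w$ or exchanges them), the event $\Event_2(\varepsilon,\pi)\cap\Event_{typ}$ forces $\Omega(\varepsilon n^2)$ pair-wise implications among the edges of $\Gamma$ to hold simultaneously.

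The probability estimate then follows a decoupling argument in the spirit of Step I. I would introduce data structures $\cal D$ consisting of triples $(i,V(N_\Gamma(i))\setminus\{i\},g_i)$ partially describing the isomorphisms $f_i$, form the $m$-fold product $\cal D^{\times m}$ for an appropriate $m$, and use the fact that on $\Event_2(\varepsilon,\pi)$ the sampled structures effectively determine $\pi$ by majority vote on $\{f_i(z)\}_{i\snb{\Gamma}z}$. The probability that $\Gamma$ is $D^m$-compatible for a given $D^m\in\cal D^{\times m}$ and satisfies all of the pair constraints would be at most $\exp(-\Omega(p\varepsilon n^2))$, while $|\cal D|^m\le\exp(O(\log(n)\,np\,m))$; a careful balance between $m$ and the assumptions $\varepsilon\ge\max\{2ep,\log^{-\Cgl}n\}$ and $np=\omega(\log^{2\Cgl}n)$ then yields the required $n^{-\omega(1)}$ bound. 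The main obstacle is this final decoupling step in the sparse regime $np=\omega(\log^{2\Cgl}n)$, which is significantly weaker than the condition in Step I: one must choose $m$ so that the sampled structures both identify $\pi$ and leave enough probabilistic freedom in $\Gamma$ to extract genuinely independent constraints, without the combinatorial size of $\cal D^{\times m}$ absorbing the exponential savings. Additional care is required for pairs $\{v,w\}$ with $v=\pi(w)$ or where the iterated map $T(\{v,w\}):=\{\pi^{-1}(v),\pi(w)\}$ has short orbits, to avoid redundant constraints.
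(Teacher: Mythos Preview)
Your overall decomposition is correct and matches the paper: first show that a witnessing permutation $\pi$ close to the identity automatically yields $\Event_3(C_3\varepsilon)$, then show that far-from-identity witnesses have probability $n^{-\omega(1)}$. Where you diverge from the paper, and where your proposal has a genuine gap, is the second half.

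Your plan is to extract $\Omega(\varepsilon n^2)$ edge-implication constraints ``$\{v,w\}\in E(\Gamma)\Rightarrow\{\pi^{-1}(v),\pi(w)\}\in E(\Gamma)$'' and then run a Step-I-style data-structure union bound. The problem is quantitative: each such implication is violated with probability only $\Theta(p)$, so the savings from the constraints is $\exp(-\Theta(p\varepsilon n^2))$. Under the hypotheses of the proposition --- only $p=\omega(\log^{2\Cgl}n/n)$ and $\varepsilon\ge\log^{-\Cgl}n$ with $\Cgl=1/100$ --- this is merely $\exp(-\omega(n\log^{\Cgl}n))$, which does \emph{not} beat the $n!\approx\exp(n\log n)$ choices of $\pi$, let alone the additional factor $|\cal D|^m$. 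You correctly identify this as ``the main obstacle,'' but no choice of $m$ or rearrangement of the Step-I machinery can close this gap: the per-constraint savings of $1-\Theta(p)$ is simply too weak when $p$ can be as small as $\mathrm{polylog}(n)/n$.

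The paper bypasses this entirely with a different and sharper device. Fixing $\sigma$ with $|\{j:\sigma(j)\ne j\}|\ge 14\varepsilon n$, it samples $m=\lceil 2n\log n\rceil$ i.i.d.\ uniform pairs $(X_j,Y_j)$ and shows (Lemmas~\ref{10983740219487109487} and~\ref{103987410294870497}) that, conditioned on $\Event_\sigma\cap\Event_{typ}$, with probability at least $(\varepsilon p/2)^m$ every one of the $2m$ pairs $\{X_j,Y_j\},\{\sigma^{-1}(X_j),\sigma(Y_j)\}$ is a distinct edge of $\Gamma$. Unconditionally this event has probability at most $p^{2m}$. Hence
\[
\Prob(\Event_\sigma\cap\Event_{typ})\le\frac{p^{2m}}{(\varepsilon p/2)^m}=\Big(\frac{2p}{\varepsilon}\Big)^m\le e^{-m},
\]
using only $\varepsilon\ge 2ep$. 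The point is that the per-sample ratio $p^2/(\varepsilon p)=p/\varepsilon$ is bounded away from $1$ \emph{independently of how small $p$ is}, so $m=O(n\log n)$ samples suffice to beat the union bound over all $n!$ permutations. No data structures and no counting of realizations of $f_i$ are needed. This conditional-versus-unconditional comparison is the idea you are missing.
\medskip
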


The crucial part of the argument is encapsulated in the lemma below,
which establishes a connection between certain properties of the permutation $\pi$
from $\Event_2(\varepsilon)$ and the number of constraints imposed on the edges of $\Gamma$.
\begin{lemma}\label{10983740219487109487}
Let $\varepsilon\in[\log^{-\Cgl}n,1/2]$.
Condition on a realization of the graph $\Gamma$ from $\Event_2(\varepsilon)\cap \Event_{typ}$,
and let $\pi$ be the permutation from the definition of $\Event_2(\varepsilon)$.
Denote by $W$ the collection of all ordered pairs $(v,w)$, with $v\neq w$,
such that $\{v,w\}$ is an edge of $\Gamma$ and $f_v(w)=\pi(w)\neq w$.
Further, let $m:=\lceil 2n\log n\rceil$, and let $X_j,Y_j$, $j=1,2,\dots,m$,
be i.i.d variables uniformly distributed on $[n]$. Then with probability at least
$$
\max(0,|W|n^{-2}-3\varepsilon p-2n^{-1}-8m n^{-2})^m,
$$
all unordered pairs $\{X_j,Y_j\}, \{\pi^{-1}(X_j),\pi(Y_j)\}$, $j=1,2,\dots,m$, are distinct edges of $\Gamma$.
\end{lemma}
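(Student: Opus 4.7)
\medskip

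\noindent\textbf{Proof proposal for Lemma~\ref{10983740219487109487}.}
Let $Z\subset [n]$ be the set of ``good'' vertices witnessing the event $\Event_2(\varepsilon)$: $|Z|\ge (1-\varepsilon)n$, and for each $z\in Z$ the set
$$N^{*}(z):=\{i\in[n]\,:\,\{i,z\}\in E(\Gamma),\ f_i(z)=\pi(z)\}$$
has size at least $(1-\varepsilon)np$. The key reduction is the following observation: if $(v,w)\in W$ and in addition $\pi^{-1}(v)\in Z$ and $\pi(w)\in N^*(\pi^{-1}(v))$, then by the very definition of $N^*$ we have $\{\pi(w),\pi^{-1}(v)\}\in E(\Gamma)$, so that both unordered pairs $\{v,w\}$ and $\{\pi^{-1}(v),\pi(w)\}$ are edges of $\Gamma$. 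Define
$$A:=\{(v,w)\in[n]^2\,:\,v\neq w,\ \{v,w\}\in E(\Gamma),\ \{\pi^{-1}(v),\pi(w)\}\in E(\Gamma)\}.$$
My plan is then to show that $|A|\ge |W|-3\varepsilon n^2 p$ and afterwards to conclude the lemma by a routine inductive sampling argument.

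\medskip

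\noindent\textbf{Step 1: $|W\setminus A|\le 3\varepsilon n^2 p$.} I split $W\setminus A$ into two families according to the above sufficient condition. The first is $\{(v,w)\in W\,:\,\pi^{-1}(v)\notin Z\}$. Since $\pi$ is a bijection and $|Z^c|\le \varepsilon n$, there are at most $\varepsilon n$ choices of $v$, and for each such $v$ the condition $\{v,w\}\in E(\Gamma)$ restricts $w$ to at most $(1+\log^{-\Cgl}n)np$ vertices by \eqref{eq: EtypDegree}. This gives at most $(1+o(1))\varepsilon n^2 p$ pairs. The second family is $\{(v,w)\in W\,:\,\pi^{-1}(v)\in Z,\ \pi(w)\notin N^*(\pi^{-1}(v))\}$: for each fixed $v$, the bad values of $\pi(w)$ are the neighbours of $\pi^{-1}(v)$ not in $N^*(\pi^{-1}(v))$, numbering at most $(1+\log^{-\Cgl}n)np-(1-\varepsilon)np\le (1+o(1))\varepsilon np$, and since $\pi$ is a permutation, each such bad value corresponds to a unique $w$. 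Summing over $v$ yields at most $(1+o(1))\varepsilon n^2 p$ pairs. Adding the two estimates and using the hypothesis $\varepsilon\ge \log^{-\Cgl}n$, we obtain $|W\setminus A|\le 3\varepsilon n^2 p$ for $n$ large.

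\medskip

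\noindent\textbf{Step 2: inductive sampling bound.} Fix $j\in\{1,\dots,m\}$ and condition on any realisation of $(X_1,Y_1),\dots,(X_{j-1},Y_{j-1})$ for which all pairs produced so far are distinct edges of $\Gamma$; denote this collection of $\le 2(j-1)$ unordered edges by $\mathcal{E}_{j-1}$. For the $j$-th sample to be ``good'' we require $(X_j,Y_j)\in A$ and in addition that $\{X_j,Y_j\}\neq \{\pi^{-1}(X_j),\pi(Y_j)\}$ and that neither of the two unordered pairs produced by $(X_j,Y_j)$ belongs to $\mathcal{E}_{j-1}$. The first extra condition excludes only the ordered pairs with $v=\pi(w)$ (the case $\pi(w)=w$ is impossible by definition of $W$), i.e.\ at most $n$ ordered pairs. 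For each edge $\{a,b\}\in \mathcal{E}_{j-1}$, the equations $\{X_j,Y_j\}=\{a,b\}$ and $\{\pi^{-1}(X_j),\pi(Y_j)\}=\{a,b\}$ each rule out two ordered pairs, for a total of at most $8(j-1)\le 8m$ forbidden ordered pairs. Since $(X_j,Y_j)$ is uniform on $[n]^2$ and independent of the previous samples,
$$\Prob\big(\text{sample $j$ good}\,\big|\,\mathcal{E}_{j-1}\big)\ \ge\ \frac{|A|-n-8m}{n^2}\ \ge\ \frac{|W|}{n^2}-3\varepsilon p-\frac{2}{n}-\frac{8m}{n^2}.$$

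\medskip

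\noindent\textbf{Step 3: multiply and conclude.} Writing the event that all $m$ samples are good as an intersection and iterating the tower property gives
$$\Prob(\text{all $m$ samples good})\ \ge\ \left(\frac{|W|}{n^2}-3\varepsilon p-\frac{2}{n}-\frac{8m}{n^2}\right)^m,$$
which is the lemma's bound (with $\max(0,\cdot)$ absorbing the trivial case). The main conceptual content is the implication in Step~1 that relates $\Event_2(\varepsilon)$ to edge constraints of $\Gamma$; Steps~2 and~3 are bookkeeping, and the only delicate point is to ensure that the forbidden ordered pairs arising from past samples and from degeneracies are cleanly controlled by the $2n^{-1}+8mn^{-2}$ term. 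I expect Step~1 to be the main obstacle, as it requires combining the $\Event_2(\varepsilon)$ description with the definition of $W$ in a way that avoids double-counting and keeps the constants under $3$.
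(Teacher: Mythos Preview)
Your Steps~2 and~3 are fine and match the paper's bookkeeping. The problem is in Step~1, in your count of the second family
\[
\big\{(v,w)\in W:\ \pi^{-1}(v)\in Z,\ \pi(w)\notin N^*(\pi^{-1}(v))\big\}.
\]
You assert that for fixed $v$ the bad values of $\pi(w)$ are the $\Gamma$--neighbours of $\pi^{-1}(v)$ lying outside $N^*(\pi^{-1}(v))$, giving at most $(1+o(1))\varepsilon np$ choices. But nothing in the hypothesis $(v,w)\in W$ forces $\pi(w)$ to be a $\Gamma$--neighbour of $\pi^{-1}(v)$: from $(v,w)\in W$ you only know $\{v,w\}\in E(\Gamma)$ and $\{v,\pi(w)\}\in E(\tilde\Gamma)$. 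The condition $\pi(w)\notin N^*(\pi^{-1}(v))$ is satisfied in particular whenever $\pi(w)$ is \emph{not} adjacent to $\pi^{-1}(v)$ in $\Gamma$, and such $\pi(w)$ are not confined to a set of size $O(\varepsilon np)$. So the bound $(1+o(1))\varepsilon n^2p$ for this family is unjustified as written.

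The fix is to change the variable of summation. Instead of fixing $v$ and counting $w$, sum over the index $i=\pi(w)$. For $(v,w)\in W$ one has $\{v,i\}\in E(\tilde\Gamma)$, and the bad condition $\pi(w)\notin N^*(\pi^{-1}(v))$ is exactly $f_i(\pi^{-1}(v))\neq v$. For each $i$, the $\tilde\Gamma$--neighbours $v$ of $i$ with $f_i(\pi^{-1}(v))=v$ are in bijection (via $v\mapsto\pi^{-1}(v)$) with $U_i:=\{u\snb{\Gamma}i:f_i(u)=\pi(u)\}$, so the bad $v$'s number $\deg_{\tilde\Gamma}(i)-|U_i|=\deg_\Gamma(i)-|U_i|$. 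Summing over $i$ and using $\sum_i|U_i|\ge(1-2\varepsilon)n^2p$ from $\Event_2(\varepsilon)$ together with the degree bound in $\Event_{typ}$ gives the required $3\varepsilon n^2p$. This is precisely the role of the set $\tilde T$ in the paper's proof; once you make this switch, the split into $Z$ and $Z^c$ becomes unnecessary and your Step~1 collapses to a single estimate.
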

\begin{proof}
Condition on a realization of the graph $\Gamma$ from $\Event_2(\varepsilon)\cap \Event_{typ}$,
and let $\pi$ be the permutation from the definition of $\Event_2(\varepsilon)$.
For each $i\in[n]$, denote by $U_i$ the collection of all vertices $v\snb{\Gamma}i$ %in $V(N_\Gamma(i))\setminus\{i\}$
such that $f_i(v)=\pi(v)$.
Observe that, in view of the definition of $\Event_2(\varepsilon)$,
$$
\sum_{i=1}^n |U_i|\geq (1-\varepsilon)pn\cdot (1-\varepsilon)n\ge \co{( 1 - 2 \varepsilon) n^2p. }
$$

Let $X,Y$ be independent variables uniform on $[n]$.
Obviously then,
$(X,Y)$ belongs to $W$ with probability $|W|n^{-2}$, and, conditioned on the event $\{(X,Y)\in W\}$,
the pair $(X,Y)$ is uniformly distributed on $W$.
Further, let $\tilde T$ be the collection of all ordered pairs $(\tilde w,\tilde v)$ such that 
$\{\tilde w,\tilde v\}$ is an edge of $\tilde \Gamma$ and $f_{\tilde w}(\pi^{-1}(\tilde v))\neq\tilde v$.
Note that
$$
|\tilde T|=\sum_{i=1}^n |(V(N_\Gamma(i))\setminus\{i\})\setminus U_i|
=\sum_{i=1}^n |(V(N_\Gamma(i))\setminus\{i\})|-\sum_{i=1}^n |U_i|
\leq (1+\log^{-\Cgl}n)n^2p-(1-2\varepsilon)n^2p\leq 3\varepsilon n^2p.
$$
Thus, we get that with probability at least $|W|n^{-2}-3\varepsilon p$,
the following holds:
\begin{align} \label{eq: XYcondition}
(X,Y)\in W\quad \mbox{ and }\quad(\pi(Y),X)\notin \tilde T.
\end{align}
\co{Assume for a moment that condition \eqref{eq: XYcondition} holds. The inclusion $(X,Y) \in W$ implies that 
$\{f_X(X), f_X(Y)\} = \{ X, \pi(Y)\}$ is an edge of $\tilde \Gamma$. Then, the condition $( \pi(Y), X) \notin \tilde T $ is equivalent to $f_{\pi(Y)}(\pi^{-1}(X))=X$. Thus, 
$\{ f_{\pi(Y)}^{-1}(X) , f_{\pi(Y)}^{-1}(\pi(Y))\} 
= \{\pi^{-1}(X), \pi(Y)\} 
$ is an edge in $\Gamma$. To summarize,
$$
    \eqref{eq: XYcondition}\;\; \Rightarrow \{X,Y\}, \{ \pi^{-1}(X), \pi(Y)\}\in E(\Gamma) \mbox{ and } \pi(Y) \neq Y,
$$
where the condition $\pi(Y) \neq Y$ follows immediately as $(X,Y) \in W$.}
Since $X\notin\{Y,\pi(Y)\}$ with probability at least $1-2n^{-1}$, we get
that the event
$$
\mbox{$\{X,Y\}$ and $\{\pi^{-1}(X),\pi(Y)\}$ are distinct edges of $\Gamma$}
$$
holds with probability at least $|W|n^{-2}-3\varepsilon p-2n^{-1}$.

In view of the above observations, with probability at least
$$
\max(0,|W|n^{-2}-3\varepsilon p-2n^{-1}-\co{4}m n^{-2})^m,
$$
all unordered pairs $\{X_j,Y_j\}, \{\pi^{-1}(X_j),\pi(Y_j)\}$, $j=1,2,\dots,m$, are distinct edges of $\Gamma$.
\end{proof}

\begin{lemma}\label{103987410294870497}
Let $\varepsilon\in[\log^{-\Cgl}n,1/2]$.
Let event $\Event$ be defined as
\begin{align*}
\Event:=
\big\{&\mbox{There is a permutation $\pi:[n]\to[n]$ with $|\{j\in[n]:\,\pi(j)\neq j\}|\geq 14\varepsilon n$}\\
&\mbox{and such that }|\{i\in [n]:\,z\snb{\Gamma}i %V(N_\Gamma(i))\setminus\{i\}\ni z
,\;f_i(z)=\pi(z)
\}|\geq (1-\varepsilon)pn\\
&\mbox{for at least $(1-\varepsilon)n$ vertices $z\in[n]$}\big\}.
\end{align*}
Condition on any realization of the graph $\Gamma$ from $\Event\cap \Event_{typ}$.
Let $W$ be defined as in Lemma~\ref{10983740219487109487}.
Then necessarily $|W|\geq 4\varepsilon n^2 p$.
\end{lemma}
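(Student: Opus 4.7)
The plan is a direct double-counting argument; the typicality event $\Event_{typ}$ is not actually used, since the bound follows purely from the structural content of $\Event$. First, I would introduce the two natural sets singled out by the hypothesis: the \emph{good} set
\[
G := \bigl\{z \in [n] : |\{i \in [n] : z\snb{\Gamma}i,\ f_i(z) = \pi(z)\}| \geq (1-\varepsilon)np\bigr\},
\]
and the set of non-fixed points $M := \{j \in [n] : \pi(j) \neq j\}$. The definition of $\Event$ gives immediately $|G| \geq (1-\varepsilon)n$ and $|M| \geq 14\varepsilon n$, whence inclusion--exclusion yields
\[
|G \cap M| \geq |M| - (n - |G|) \geq 14\varepsilon n - \varepsilon n = 13\varepsilon n.
\]

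Next, I would reinterpret $W$ by grouping pairs according to their second coordinate. A pair $(v,w)$ lies in $W$ precisely when (a) $w \in M$, and (b) $v$ belongs to $S_w := \{i \in [n] : \{i,w\} \in E(\Gamma),\ f_i(w) = \pi(w)\}$ (note that $v \neq w$ is automatic since $\{v,w\}$ must be an edge). The definition of $G$ says exactly that $|S_w| \geq (1-\varepsilon)np$ whenever $w \in G$, so restricting the sum $|W| = \sum_{w \in M} |S_w|$ to $w \in G \cap M$ gives
\[
|W| \geq |G \cap M| \cdot (1-\varepsilon)np \geq 13\varepsilon n \cdot (1-\varepsilon)np.
\]
Since $\varepsilon \leq 1/2$ forces $(1-\varepsilon) \geq 1/2$, the right-hand side is at least $\tfrac{13}{2}\varepsilon n^2 p$, comfortably above the desired $4\varepsilon n^2 p$.

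There is no real obstacle here: the lemma is essentially a pigeonhole extraction from the defining properties of $\Event$. The only observation worth isolating is that the two conditions defining $W$ decouple cleanly into ``$w \in M$'' and ``$v \in S_w$'', so that once the second coordinate is restricted to $G \cap M$, the lower bound on $|S_w|$ kicks in uniformly and the slack between the constants $14$ and $4$ in the statement is handled automatically.
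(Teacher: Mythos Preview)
Your argument is correct and is in fact cleaner than the paper's. Both proofs decompose $W$ according to the second coordinate $w$, but they diverge in how the lower bound is extracted. The paper introduces the set $Q$ of all ordered edges $(v,w)$ with $f_v(w)=\pi(w)$ and the set $U$ of all ordered edges $(v,w)$ with $w\in J:=\{j:\pi(j)\neq j\}$, observes that $W=Q\cap U$, and applies inclusion--exclusion in the ambient set of ordered edges: $|W|\geq |Q|+|U|-|\cal P|$. To bound $|U|$ from below and $|\cal P|$ from above it invokes the degree estimates from $\Event_{typ}$, arriving at $|W|\geq |J|np/2-3\varepsilon n^2p\geq 4\varepsilon n^2p$.

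Your route bypasses this entirely by doing the inclusion--exclusion one level down, on the vertex sets $G$ and $M$ rather than on edge-pair sets. Since the hypothesis of $\Event$ already gives a uniform lower bound $|S_w|\geq(1-\varepsilon)np$ for each $w\in G$, restricting the sum to $w\in G\cap M$ yields $|W|\geq 13\varepsilon n\cdot(1-\varepsilon)np\geq 6.5\,\varepsilon n^2p$. This avoids any appeal to $\Event_{typ}$ and produces a slightly better constant. The paper's argument is not wrong, but yours exploits more directly that the ``good'' property of $\Event$ is stated vertex-by-vertex, so there is no need to aggregate and then disaggregate via degree bounds.
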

\begin{proof}
Let $Q$ be the collection of all ordered pairs $(v,w)$
with $v\neq w$,
such that $\{v,w\}$ is an edge of $\Gamma$ and $f_v(w)=\pi(w)$.
By the definition of $\Event$, we get $|Q|\geq (1-\varepsilon)pn\cdot (1-\varepsilon)n$.
Let $J$ be the set of indices $j\in[n]$ with $\pi(j)\neq j$, and let $U$ be the collection
of all ordered pairs of the form $(v,j)$ where $j\in J$ and $\{v,j\}$ is an edge of $\Gamma$.
From the definition of $\Event_{typ}$ we get $|U|\geq |J|(1-\log^{-\Cgl}n)pn$. On the other hand,
the total number of pairs $(v,w)$ corresponding to edges of $\Gamma$ is at most $n\cdot (1+\log^{-\Cgl}n)pn$.
Thus,
$$
|Q|+|U|-|Q\cap U|\leq n\cdot (1+\log^{-\Cgl}n)pn,
$$
implying
$$
|W|=|Q\cap U|\geq (1-\varepsilon)pn\cdot (1-\varepsilon)n+|J|(1-\log^{-\Cgl}n)pn-n\cdot (1+\log^{-\Cgl}n)pn
\geq |J|pn/2-3\varepsilon n^2p.
$$
Since $|J|\geq 14\varepsilon n$, we get the result.
\end{proof}

\begin{proof}[Proof of Proposition~\ref{0983701984721-94871-}]
Let $\Event$ be defined as in Lemma~\ref{103987410294870497},
let $m:=\lceil 2n\log n\rceil$, and let $X_j,Y_j$, $j=1,2,\dots,m$,
be i.i.d variables uniformly distributed on $[n]$ mutually independent with $\Gamma$.
Let $M$ be the set of all permutations $\sigma$ with
$|\{j\in[n]:\,\sigma(j)\neq j\}|\geq 14\varepsilon n$, and for each $\sigma\in M$ denote by
$\Event_\sigma$ the event
\begin{align*}
\Event_\sigma:=
\big\{&|\{i\in [n]:\,z\snb{\Gamma}i %V(N_\Gamma(i))\setminus\{i\}\ni z
,\;f_i(z)=\sigma(z)
\}|\geq (1-\varepsilon)pn\\
&\mbox{for at least $(1-\varepsilon)n$ vertices $z\in[n]$}\big\}.
\end{align*}
Obviously, $\Event=\bigcup_{\sigma\in M}\Event_\sigma$, and,
combining Lemmas~\ref{10983740219487109487} and~\ref{103987410294870497}, we get that
conditioned on any $\Gamma$ in $\Event_\sigma\cap \Event_{typ}$,
with conditional probability at least
$$
\max(0,4\varepsilon p-3\varepsilon p-2n^{-1}-8m n^{-2})^m\geq \bigg(\frac{\varepsilon p}{2}\bigg)^m
$$
all pairs $\{X_j,Y_j\}, \{\sigma^{-1}(X_j),\sigma(Y_j)\}$, $j=1,2,\dots,m$, are distinct edges of $\Gamma$.

On the other hand, for \co{an} arbitrary permutation $\sigma$ the (unconditional) probability that
the pairs $\{X_j,Y_j\}$ and $\{\sigma^{-1}(X_j),\sigma(Y_j)\}$, $j=1,2,\dots,m$, are distinct edges of $\Gamma$,
is at most $p^{2m}$. Hence, for every $\sigma\in M$ we get
\co{ 
\begin{align*}
\Prob(\Event_\sigma\cap \Event_{typ})&\leq
\frac{\Prob\big\{ \forall j, \{X_j, Y_j\} \in \Gamma, \{ \sigma^{-1}(X_j), \sigma(Y_j)\} \in \Gamma, \mbox{ and } 
\{X_j, Y_j\} \neq \{ \sigma^{-1}(X_j), \sigma(Y_j)\}\big\}}{\Prob\big\{ \forall j\in [m], \{X_j, Y_j\} \in \Gamma, \{ \sigma^{-1}(X_j), \sigma(Y_j)\} \in \Gamma, \mbox{ and } 
\{X_j, Y_j\} \neq \{ \sigma^{-1}(X_j), \sigma(Y_j)\}\, \big|\, 
\Event_\sigma\cap \Event_{typ} \big\} 
} \\
&\le  \frac{p^{2m}}{ ( \varepsilon p/ 2)^m} =\bigg(\frac{2 p}{\varepsilon}\bigg)^m
\leq \exp(-m),
\end{align*}
where the last inequality follows from the assumption
$ 2pe \le \varepsilon$. }

Taking the union bound over all $\sigma\in M$, we get that the event
$
\Event\cap \Event_{typ}$ has probability $n^{-\omega(1)}$,
so that
$$
\Prob((\Event_2(\varepsilon)\setminus\Event)\cap \Event_{typ})\geq \Prob((\Event_2(\varepsilon)\cap \Event_{typ})
-n^{-\omega(1)}.
$$

It remains to check that $(\Event_2(\varepsilon)\setminus\Event)\cap \Event_{typ}$
is contained in $\Event_3(C\varepsilon)\cap \Event_{typ}$ for a sufficiently large universal constant $C>0$.
\co{Fix any realization of $\Gamma$ within the event
$(\Event_2(\varepsilon)\backslash \Event) \backslash \Event_{typ}$, and let $\pi$ be the corresponding permutation from $\Event_2(\varepsilon)$. Then the set %To start, since $G \in \Event_2(\varepsilon)\backslash \Event$,
$\{z\in [n]\,:\, \pi(z)\neq z\}$ has cardinality at most $14 \varepsilon n$. 
Combined with \eqref{eq: EtypDegree}, this implies that the set
$ 
O_1 := \{ (i,z)\,:\, i \snb{\Gamma} z,\,\, \pi(z) \neq z \} 
$
has cardinality at most 
$
 (1+ \log^{-\Cgl}(n)) pn \cdot    14 \varepsilon n  \le 15 \varepsilon n^2p. 
$
Next, by the definition of $\Event_2(\varepsilon)$, the set 
$
O_2 := \{ (i,z) \,:\, i \snb{\Gamma} z,\,\, f_i(z) = \pi (z) \} 
$
has cardinality at least 
$
    (1 - \varepsilon) pn \cdot (1-\varepsilon) n \ge (1-2 \varepsilon)n^2p,
$
and thus $ |O_2\backslash O_1| \ge (1- 17 \varepsilon ) n^2p$. On the other hand, 
$$
O_2 \backslash O_1 \subset \{ (i,z) \,: \, i \snb{\Gamma} z,\, f_i(z)=z \}, 
$$
which implies that we are within the event $\Event_3(17 \varepsilon)$.}
\end{proof}

\section{Step IV}\label{s:step4}

In this section, we complete the proof of Theorem~\ref{309847109847}.
Our goal is to develop a bootstrapping argument which would allow to pass from the conclusion of Step III
to the desired statement, \co{specifically, we will show that
if the permutation $\pi$ from the definition of $\Event_2(\varepsilon)$ is ``close'' to the identity, then it {\it must} be the identity.}
Recall some definitions from the introduction:
\begin{align*}
 \cal{P} &:= \{ (v,w) \in [n]^{\times 2} \,:\, \{v,w\} \in E(\Gamma)\},\\
 \tilde{\cal{P}} &:= \{ (\tilde{v},\tilde{w}) \in [n]^{\times 2} \,:\, \{\tilde{v},\tilde{w}\} \in E(\tilde{\Gamma})\},\\ 
\cal{M}&: = \{ (v,w) \in \cal{P} \,:\, f_v(w)=w\}.
\end{align*}
%Next, we define the set $\cal V$ as we discussed in the introduction. 
For every pair of vertices $(v,w)\in\cal{P}$, let 
$$
    \cal{V}(v,w) := \{ z \,:\, (v,z) \in \cal{M} \mbox{ and } \{w,z\}\in E(\Gamma) \},
$$ 
and define
$$
    \cal{V}:= \Big\{ (v,w) \in \cal{P} \, :\,    |\cal{V}(v,w)|\ge \frac{6}{10} p^2n \Big\}. 
$$
\co{Everywhere in this section, by $\cal{V}^c$ and $\cal{M}^c$ we denote the complements of $\cal{V}$
and $\cal{M}$ in $\cal{P}$.}

\begin{lemma} \label{lem: whyV}
    Conditioned on $\Event_{typ}$, we have 
    \begin{align} \label{eq: VNotMImpliesNotV}
    (v,w) \in \cal{V} \cap \cal{M}^c \Rightarrow (f_v(w), f^{-1}_{f_v(w)}(v) ) \in \cal{V}^c. 
\end{align}
\end{lemma}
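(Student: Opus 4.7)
The plan is to argue by contradiction. Set $\tilde w := f_v(w)$ and $\tilde v := f^{-1}_{\tilde w}(v)$; assume $(v,w) \in \cal V \cap \cal M^c$ and, for contradiction, $(\tilde w, \tilde v) \in \cal V$, while $\Gamma$ lies in $\Event_{typ}$. The three vertices $v,w,\tilde w$ are then pairwise distinct: $v\neq w$ because $\{v,w\}\in E(\Gamma)$, $\tilde w \neq w$ is the defining condition $(v,w)\in \cal M^c$, and $\tilde w \neq v$ follows from injectivity of $f_v$ together with $f_v(v)=v$.

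The heart of the argument is an upper bound on $|\cal V(v,w) \cup \cal V(\tilde w, \tilde v)|$. I would show that both sets are contained in the common neighborhood of $v$ and $\tilde w$ in $\tilde\Gamma$. For $z \in \cal V(v,w)$ the conditions $z \in V(N_\Gamma(v))\setminus\{v\}$ and $f_v(z)=z$ force $z \in V(N_{\tilde\Gamma}(v))\setminus\{v\}$ and hence $\{v,z\}\in E(\tilde\Gamma)$; moreover, the edge $\{w,z\}$ of $N_\Gamma(v)$ is carried by the isomorphism $f_v$ to $\{\tilde w,z\}\in E(N_{\tilde\Gamma}(v))$. A symmetric argument using $f_{\tilde w}(\tilde v)=v$ and $f_{\tilde w}(z)=z$ covers the case $z\in\cal V(\tilde w, \tilde v)$. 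Since $f_v^{-1}:V(N_{\tilde\Gamma}(v))\to V(N_\Gamma(v))$ is an edge-preserving bijection sending $\tilde w$ to $w$, it places the common neighbors of $v,\tilde w$ in $\tilde\Gamma$ in bijection with the common neighbors of $v,w$ in $\Gamma$. Invoking \eqref{eq: EtypCommonNeighbors2} then yields
\[
|\cal V(v,w) \cup \cal V(\tilde w, \tilde v)| \leq (1+\log^{-\Cgl}n)\,np^2.
\]

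For the matching lower bound, any $z \in \cal V(v,w) \cap \cal V(\tilde w, \tilde v)$ lies in $N_\Gamma(v) \cap N_\Gamma(w) \cap N_\Gamma(\tilde w)$, and since $v,w,\tilde w$ are distinct, \eqref{eq: EtypCommonNeighbors3pts} gives $|\cal V(v,w) \cap \cal V(\tilde w, \tilde v)| \leq \frac{1}{10}\,np^2$. Combining this with $|\cal V(v,w)|,|\cal V(\tilde w, \tilde v)| \geq \frac{6}{10}\,np^2$ via inclusion--exclusion produces $|\cal V(v,w) \cup \cal V(\tilde w, \tilde v)| \geq \frac{11}{10}\,np^2$, which for $n$ large enough that $\log^{-\Cgl}n < \frac{1}{10}$ contradicts the upper bound.

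The step I expect to be the main obstacle is recognizing that the sharp control on the union must be extracted by counting common neighbors in $\tilde\Gamma$ (where the pair $(v,\tilde w)$ plays the role that $(v,w)$ does in $\Gamma$) and transporting the count back to $\Gamma$ via $f_v^{-1}$; a naive $\Gamma$-side bound on each of $|\cal V(v,w)|$ and $|\cal V(\tilde w, \tilde v)|$ individually only gives $(1+o(1))np^2$ per set, which is too loose to combine with the $\frac{1}{10}np^2$ intersection bound through inclusion--exclusion. Once this observation is in place, the remainder is routine bookkeeping with the events already included in $\Event_{typ}$.
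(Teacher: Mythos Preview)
Your proof is correct and follows essentially the same approach as the paper: both arguments show that $\cal V(v,w)$ and $\cal V(\tilde w,\tilde v)$ lie inside the set of common neighbors of $v$ and $\tilde w$ in $\tilde\Gamma$, bound that set by $(1+\log^{-\Cgl}n)np^2$ via the isomorphism $f_v$ and \eqref{eq: EtypCommonNeighbors2}, and reach a contradiction with \eqref{eq: EtypCommonNeighbors3pts} through inclusion--exclusion. The only cosmetic differences are that the paper phrases the argument as the contrapositive $(v,w),(\tilde w,\tilde v)\in\cal V\Rightarrow (v,w),(w,v)\in\cal M$ (slightly stronger, but not needed elsewhere), and applies inclusion--exclusion to lower-bound the intersection rather than the union; these are equivalent rearrangements of the same idea.
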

We refer to Figure~\ref{f:fig3} for a graphical illustration of the above lemma.

\medskip

\begin{figure}[h]
\caption{A graphical interpretation of the statement of
Lemma~\ref{lem: whyV}. Here, $(v,w)$ is an ordered pair from $\cal{V} \cap \cal{M}^c$, with $\tilde w:=f_v(w)$.
On the left hand side, we depict a part of the neighborhoods $N_\Gamma(v),N_{\tilde\Gamma}(v)$, showing $w$
and common neighbors $z_1,z_2,z_3,\dots$
of $v$ and $w$ in $\Gamma$ (which correspond to common neighbors $\tilde z_1,\tilde z_2,\tilde z_3,\dots$
of $v$ and $\tilde w$ in $\tilde\Gamma$). On the right hand side, we consider a part of the neighborhoods
$N_{\Gamma}(\tilde w),N_{\tilde\Gamma}(\tilde w)$. Note that the condition $(v,w)\in \cal{V}$
implies that $\tilde z_j=z_j$ for at least $\frac{6}{10}p^2n$ indices $j$.
If it were also true that $(\tilde w,f_{\tilde w}^{-1}(v))\in \cal{V}$ then we would necessarily have
$z_j'=\tilde z_j$ for at least $\frac{6}{10}p^2n$ indices $j$. But then $z_j$ would be a common neighbor
of $v,w,\tilde w$ in $\Gamma$ for at least $(1-o(1))\frac{2}{10}p^2n$ indices $j$, which is prohibited by $\Event_{typ}$.}

\centering  

\subfigure
{
\begin{tikzpicture}[every node/.style={rectangle,thick,draw}]
    \node (v) at (-5,0) {$v$};
    \node (w) at (-3,0) {$(w,\tilde w)$};
    \node (tw) at (5,0) {$\tilde w$};
    \node (pv) at (7,0) {$(f_{\tilde w}^{-1}(v),v)$};
    \node (z1) at (-6,2) {$(z_1,\tilde z_1)$};
    \node (z2) at (-4,2) {$(z_2,\tilde z_2)$};
    \node (z3) at (-2,2) {$(z_3,\tilde z_3)$};
    \node (dots1) at (0,2) {\dots};
    \node (z1t) at (3,2) {$(z_1',\tilde z_1)$};
    \node (z2t) at (5,2) {$(z_2',\tilde z_2)$};
    \node (z3t) at (7,2) {$(z_3',\tilde z_3)$};
    \node (dots2) at (9,2) {\dots};
    \draw[blue, very thick] (v) to (w);
    \draw[blue, very thick] (v) to (z1);
    \draw[blue, very thick] (v) to (z2);
    \draw[blue, very thick] (v) to (z3);
    \draw[blue, very thick] (v) to (dots1);
    \draw[blue, very thick] (w) to (z1);
    \draw[blue, very thick] (w) to (z2);
    \draw[blue, very thick] (w) to (z3);
    \draw[blue, very thick] (w) to (dots1);
    \draw[blue, very thick] (pv) to (tw);
    \draw[blue, very thick] (pv) to (z1t);
    \draw[blue, very thick] (pv) to (z2t);
    \draw[blue, very thick] (pv) to (z3t);
    \draw[blue, very thick] (pv) to (dots2);
    \draw[blue, very thick] (tw) to (z1t);
    \draw[blue, very thick] (tw) to (z2t);
    \draw[blue, very thick] (tw) to (z3t);
    \draw[blue, very thick] (tw) to (dots2);
\end{tikzpicture}
}
\label{f:fig3}
\end{figure}
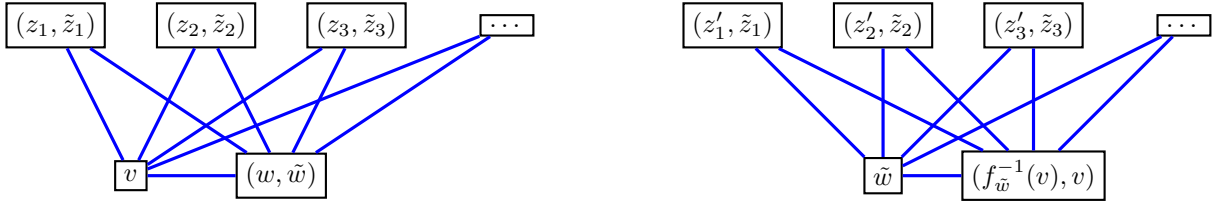

\begin{proof}
    For brevity, we denote $\tilde w:=f_v(w)$.
    It is suffice to prove  
    \begin{align} \label{eq: verifiableImplyMatch}
     (v,w), (\tilde{w}, f^{-1}_{\tilde w}(v)) \in \cal V \Rightarrow 
    (v,w), (w, v) \in \cal M,
    \end{align}
    because for any $(v,w) \in \mathcal{V} \cap \mathcal{M}^c$, $(\tilde w, f^{-1}_{\tilde w}(v) )$ 
    cannot be contained in $\mathcal{V}$ without contradicting \eqref{eq: verifiableImplyMatch}.
    
    Fix $(v,w), ( \tilde w, f_{\tilde w}^{-1}(v)) \in \mathcal{V}$.
    Since $(f_v(v),f_v(w))=(v,\tilde{w})$, the pair $\{v, \tilde{w} \} $ is an edge of $\tilde{\Gamma}$.
    Applying $f_{\tilde w}^{-1}$ to 
    $(v, \tilde{w})$ we get that $\{ f^{-1}_{\tilde{w}}(v), \tilde{w} \}$ is an edge of $\Gamma$.
    Further, observe that $f_v(\cal{V}(v,w)) = \cal{V}(v,w)$,
    and thus $\cal{V}(v,w)$ is contained in the set of common neighbors of $v$ and $\tilde{w}$ in $\tilde{\Gamma}$. Applying the same reasoning,  
    $ \cal{V}(\tilde{w}, f^{-1}_{\tilde{w}}(v))$  is contained in the set of common neighbors of $v$ and $\tilde{w}$ in $\tilde\Gamma$.
    Because the set of common neighbors of $\{v, \tilde{w}\}$ in $\tilde{\Gamma}$ is contained in $N_{\tilde \Gamma}(v)$ and 
    $f_v$ is a graph isomorphism from $N_{\Gamma}(v)$ to $N_{\tilde \Gamma}(v)$, 
    the number of common neighbors of $\{v, \tilde{w}\}$ in $\tilde{\Gamma}$
    is the same as the number of common neighbors of $\{v,w\}$ in $\Gamma$, 
    which is at most $( 1 + \log^{-\Cgl}n) p^2n$ (see condition \eqref{eq: EtypCommonNeighbors2}).
    
    Thus, if both $(v,w), (\tilde{w},f^{-1}_{\tilde{w}}(v)) \in \cal{V}$, then we get
    $$|\cal{V}(v,w) \cap \cal{V}(\tilde{w}, f^{-1}_{\tilde{w}}(v))| \ge 
    \frac{6}{10}p^2n + \frac{6}{10}p^2n - (1+\log^{-\Cgl}n)p^2n  \ge \frac{1.5}{10}p^2n.$$ 
    
    By \eqref{eq: EtypCommonNeighbors3pts}
    in $\Event_{typ}$, the number of common neighbors of any 3 distinct points in $\Gamma$ is at most $ \frac{p^2n}{10}$. 
    Thus, we conclude that 
    $ |\{v,w, \tilde{w}, f^{-1}_{\tilde{w}}(v)\}| \le 2$.  
    On the other hand, the set is at least of size $2$ since $w\neq v$. Further, $\tilde{w} \neq v$ since 
    $\{v,\tilde{w}\} \in E(\tilde{\Gamma})$, which leaves two options: either $v = f^{-1}_{\tilde w}(v)$ and $w=\tilde w$,
    or $w = \tilde{w} = f^{-1}_{\tilde w}(v)$. But
    the latter is not possible since
    $\{ \tilde{w}, f^{-1}_{\tilde{w}}(v)\} \in E(\Gamma)$. Therefore, $ v = f^{-1}_{\tilde w}(v)$ and $w=\tilde{w}$. 
\end{proof}

Next, we consider a partition $[n] = J_{\cal M} \sqcup J_{\cal M}^c$, where  
$$J_{\cal{M}} = \Big\{ v \in [n]\,:\,  |\{w:\; (v,w) \in {\cal{M}}^c \}| \le \frac{1}{10}pn \Big\}.$$
The set $J_{\cal M}$ can be viewed as a collection of ``good'' vertices of $\Gamma$ whose neighbors \co{are mostly mapped to vertices of $\tilde \Gamma$ with matching labels}. 
The next lemma asserts that, conditioned on $\Event_{typ}$ and assuming $p^2n=\omega(\log^2 n)$,
the set of points $\{w:\; (v, w) \in \cal{V}^c \}$ is much less than the set of points $\{w:\;  (v,w) \in \cal{M}^c \}$
for every $v\in J_{\cal M}$.

\begin{lemma} \label{lem: McToVc}
Conditioned on $\Event_{typ}$,
\begin{align*}
     \forall\; v \in J_{\cal M}%[n] \mbox{ with }& |\{  (v,w) \in \cal{M}^c \}| \le \frac{1}{10}np \, 
     \quad\mbox{we have}\;\;
          | \{w:\; (v, w) \in \cal{V}^c \} | \le \frac{ \log^2 n}{ p^2n }  |\{w:\;  (v,w) \in \cal{M}^c \}|.
\end{align*}
\end{lemma}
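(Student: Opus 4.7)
The plan is to observe that this lemma is essentially an unpacking of the event \eqref{eq: EtypEdgetoBigPart} already established as part of $\Event_{typ}$. The key is to recognize that $\cal{V}(v,w)$ is precisely the kind of quantity bounded by \eqref{eq: EtypEdgetoBigPart}, once we take $J$ to be the set of ``unmatched'' neighbors of $v$.

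Concretely, fix $v \in J_{\cal M}$ and define
$$
J := \{z : (v,z) \in \cal{M}^c\} = \{z \snb{\Gamma} v \,:\, f_v(z) \neq z\}.
$$
By definition of $J_{\cal M}$, we have $|J| \leq \tfrac{1}{10} np \leq \tfrac{1}{3} np$, so $J$ falls within the scope of the hypothesis of \eqref{eq: EtypEdgetoBigPart}. Moreover, unpacking the definition of $\cal{V}(v,w)$ for $w \snb{\Gamma} v$, the condition $(v,z)\in\cal{M}$ is equivalent to ``$z \snb{\Gamma} v$ and $z\notin J$,'' which gives
$$
\cal{V}(v,w) = \{u \snb{\Gamma} v \,:\, u \notin J,\; \{w,u\} \in E(\Gamma)\}.
$$

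Next I would check the numerics. With $|J|\leq np/10$, the threshold appearing in \eqref{eq: EtypEdgetoBigPart} satisfies
$$
0.999 (np - |J|) p \;\geq\; 0.999 \cdot \tfrac{9}{10}\, np^2 \;>\; \tfrac{6}{10} np^2.
$$
Therefore any $w \snb{\Gamma} v$ with $(v,w)\in\cal V^c$, i.e.\ $|\cal V(v,w)| < \tfrac{6}{10} np^2$, automatically satisfies $|\cal V(v,w)| \leq 0.999(np-|J|)p$. Applying \eqref{eq: EtypEdgetoBigPart} with this $J$ and noting that $\{w : (v,w)\in\cal V^c\}\subset V(N_\Gamma(v))\setminus\{v\}$ yields
$$
|\{w : (v,w)\in \cal V^c\}| \;\leq\; \frac{\log^2 n}{np^2}\,|J| \;=\; \frac{\log^2 n}{np^2}\,|\{w : (v,w)\in\cal{M}^c\}|,
$$
which is exactly the claim. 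There is no real obstacle: the only thing to verify carefully is the numerical comparison of $\tfrac{6}{10}np^2$ against the threshold in \eqref{eq: EtypEdgetoBigPart}, which is immediate from $|J|\leq np/10$. The design of the constants in the definitions of $J_{\cal M}$ and $\cal V$ is precisely tailored to make this one-line reduction work.
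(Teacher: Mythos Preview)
Your proof is correct and follows essentially the same approach as the paper: you set $J=\{z:(v,z)\in\cal M^c\}$, identify $\cal V(v,w)$ with $\{u\snb{\Gamma}v:u\notin J,\ \{w,u\}\in E(\Gamma)\}$, and invoke \eqref{eq: EtypEdgetoBigPart}. Your explicit numerical check that $0.999\cdot\tfrac{9}{10}np^2>\tfrac{6}{10}np^2$ makes the inclusion step slightly more transparent than the paper's presentation, but the argument is the same.
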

\begin{proof}
This is a consequence of \eqref{eq: EtypEdgetoBigPart} in $\Event_{typ}$. To see that, 
\co{we fix $v \in J_{\cal M}$ and}
let $J= \{ w\,:\, (v,w) \in \cal{M}^c \}$. Notice that $|J| \le \frac{1}{10}pn$ and 
\begin{align*}
\{ w\,:\, (v,w) \in \cal{V}^c \} 
&=
\Big\{ w\,:\;(v,w)\in\cal{P},\; |\{ z \,:\, (v,z) \in \cal{M} \mbox{ and } \{w,z\}\in E(\Gamma) \}|< \frac{6}{10}p^2n\Big\}\\
&\subset 
\Big\{ \co{w\snb{\Gamma}v} %\in V(N_{\Gamma}(v)) \backslash \{v\} 
\, :\,  
    \big| \big\{ u\snb{\Gamma}v
    %\in (V(N_{\Gamma}(v)) \backslash \{v\}) \cap J^c
    \,:\,u\notin J,\,
    \{ w,u \} \in E(\Gamma) \big\} \big| \le 0.999(pn-|J|)p \Big\}.
\end{align*}
\end{proof}

\begin{lemma} \label{lem: IVsturctureJ}
Assume that $p^2n = \omega\big( \log^2 n \big)$. 
Conditioned on $\Event_{typ}$ we have
\begin{align} \label{eq: likelyMatched}
|\{ (v,\tilde w) \in \tilde{\cal{P}} \,:\, v \in J_{\cal{M}},  \tilde w \in J_{\cal{M}}\, \mbox{ and } f_{v}^{-1}(\tilde w)\neq \tilde w \}| &\le 3\frac{\log^2 n}{p} |J_{\cal{M}}^c|.
\end{align}
In particular, if $J_{\cal M}^c = \emptyset$, then necessarily $\cal M = \cal P$, and thus $\Gamma = \tilde \Gamma$.

Moreover, for any parameter $\delta \in (0, 0.001]$, and assuming $n$  is sufficiently large, 
conditioned on the intersection $\Event_{typ}\cap \{|J_{\cal{M}}^c| \le \delta n\}$, 
\begin{align}
\label{eq: crossMatch}
|\{ (v,\tilde w) \in \tilde{\cal{P}}  \, :\, v \in J_{\cal{M}}, \tilde w \in J_{\cal{M}}^c \}|
=|\{ (v,\tilde w) \in \tilde{\cal{P}}  \, :\, v \in J_{\cal{M}}^c, \tilde w \in J_{\cal{M}} \}|
&\ge (1 - 2\delta )
|J_{\cal{M}}^c|pn.
\end{align}
\end{lemma}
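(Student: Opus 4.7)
The plan is two-part, mirroring the two conclusions of the lemma. For \eqref{eq: likelyMatched}, observe that the bijection $(v,w) \in \cal P \leftrightarrow (v, f_v(w)) \in \tilde{\cal P}$ identifies the left-hand side of \eqref{eq: likelyMatched} with $|\cal C|$, where $\cal C := \{(v,w) \in \cal M^c : v \in J_{\cal M},\ f_v(w) \in J_{\cal M}\}$. Since each $(v,w) \in \cal C$ sits in $\cal M^c$, Lemma~\ref{lem: whyV} forces either $(v,w) \in \cal V^c$, or else $\phi(v,w) := (f_v(w), f_{f_v(w)}^{-1}(v)) \in \cal V^c$; in both subcases we extract a distinct element of $\{(a,b) \in \cal V^c : a \in J_{\cal M}\}$ (for the second subcase via the involution $\phi$, whose image pair has first coordinate $f_v(w) \in J_{\cal M}$). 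Combined with Lemma~\ref{lem: McToVc} this yields
\[
|\cal C| \le 2\sum_{v \in J_{\cal M}} |\{w : (v,w) \in \cal V^c\}| \le \frac{2\log^2 n}{np^2} \sum_{v \in J_{\cal M}} M^c_v,
\]
where $M^c_v := |\{w : (v,w) \in \cal M^c\}|$. I would then split $\sum_{v \in J_{\cal M}} M^c_v = |\cal C| + |\{(v,w) \in \cal M^c : v \in J_{\cal M},\ f_v(w) \in J_{\cal M}^c\}|$ and bound the second summand by $\sum_{\tilde w \in J_{\cal M}^c}(|V(N_{\tilde\Gamma}(\tilde w))|-1) \le (1+o(1))np|J_{\cal M}^c|$ using the bijection to $\tilde{\cal P}$ and \eqref{eq: EtypDegree}. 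The self-bounding inequality then rearranges (using $\tfrac{2\log^2 n}{np^2} = o(1)$ from $np^2 = \omega(\log^2 n)$) to $|\cal C| \le 3\tfrac{\log^2 n}{p}|J_{\cal M}^c|$ for $n$ large. For the ``in particular'' consequence, $J_{\cal M}^c = \emptyset$ forces $\cal C = \cal M^c$, so $\cal M = \cal P$; combined with $|E(\Gamma)| = |E(\tilde\Gamma)|$ (from $|V(N_\Gamma(v))| = |V(N_{\tilde\Gamma}(v))|$ for each $v$) this gives $E(\Gamma) = E(\tilde\Gamma)$, i.e.\ $\Gamma = \tilde\Gamma$.

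For \eqref{eq: crossMatch}, the equality of the two ordered-edge counts is just the symmetry of $\tilde{\cal P}$. For the inequality I would transport the count back to $\cal P$ via $(v, \tilde w) \mapsto (v, f_v^{-1}(\tilde w))$ and split by $\cal M$ vs.\ $\cal M^c$ to obtain
\[
|\{(v,\tilde w) \in \tilde{\cal P} : v \in J_{\cal M},\ \tilde w \in J_{\cal M}^c\}| = |E(\Gamma_{J_{\cal M}, J_{\cal M}^c})| - X + Y,
\]
where $X := |\{(v,w) \in \cal M^c : v \in J_{\cal M},\, w \in J_{\cal M}^c\}|$ and $Y := |\{(v,w) \in \cal M^c : v \in J_{\cal M},\, f_v(w) \in J_{\cal M}^c\}|$. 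Partitioning $\{(v,w) \in \cal M^c : v \in J_{\cal M}\}$ four-ways by the joint location of $(w, f_v(w))$ in $J_{\cal M} \times J_{\cal M}$, $J_{\cal M} \times J_{\cal M}^c$, $J_{\cal M}^c \times J_{\cal M}$, $J_{\cal M}^c \times J_{\cal M}^c$, one sees that $X - Y$ equals the third block minus the second, so in particular is bounded above by the third block, which is a subset of $\cal C$. Hence $X - Y \le |\cal C| \le 3\tfrac{\log^2 n}{p}|J_{\cal M}^c|$ by part 1, and under $np^2 = \omega(\log^2 n)$ this is $\le \delta np |J_{\cal M}^c|$ for $n$ large. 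On the other hand, \eqref{eq: EtypEdgeToComplement} applied to $J := J_{\cal M}^c$ together with the conditioning $|J_{\cal M}^c| \le \delta n$ yields $|E(\Gamma_{J_{\cal M}, J_{\cal M}^c})| \ge (1-\delta-o(1))np|J_{\cal M}^c|$. Combining produces the desired $(1-2\delta)np|J_{\cal M}^c|$ lower bound.

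The main technical obstacle is setting up the self-bounding inequality in part 1 so that the $|\cal C|$ feedback can be absorbed on the left; this hinges on the recognition that $\sum_{v \in J_{\cal M}} M^c_v - |\cal C|$ equals the number of unmatched ordered pairs with first coordinate in $J_{\cal M}$ whose $f_v$-image lies in $J_{\cal M}^c$ --- a quantity bounded by the total $\tilde\Gamma$-degree of $J_{\cal M}^c$ via \eqref{eq: EtypDegree}. Once this is in place, part 2 is a mechanical decomposition plus one application of \eqref{eq: EtypEdgeToComplement}.
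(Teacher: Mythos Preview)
Your proof is correct and follows essentially the same route as the paper's: both use the bijection $(v,w)\leftrightarrow(v,f_v(w))$ between $\cal P$ and $\tilde{\cal P}$, invoke Lemma~\ref{lem: whyV} together with the involution $\phi(v,w)=(f_v(w),f_{f_v(w)}^{-1}(v))$ to land in $\cal V^c(J_{\cal M})$, apply Lemma~\ref{lem: McToVc}, and close the loop via the $\tilde\Gamma$-degree bound on $J_{\cal M}^c$ from \eqref{eq: EtypDegree}; for \eqref{eq: crossMatch} both combine \eqref{eq: EtypEdgeToComplement} with the part-1 bound. Your organization as an explicit self-bounding inequality for $|\cal C|$ is a slight repackaging of the paper's sequential chain (the paper first derives $|\cal M^c(J_{\cal M})|\le(1+o(1))|J_{\cal M}^c|np$ and then plugs back), but the content is the same; one small point you assert without justification is that $\phi$ is an involution on $\cal P$, which the paper verifies in one line by applying $\phi$ twice.
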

Before presenting the proof, we would like to discuss the conditions of the lemma and why the statement {\it should}
be true.
The first part of the lemma deals with the statistics of the collection of ``bad'' pairs from $\tilde{\cal{P}}$
i.e pairs $(v,\tilde w)\in \tilde{\cal{P}}$ with $f_{v}^{-1}(\tilde w)\neq \tilde w$.
The first assertion of the lemma is essentially that, under the assumption $p^2n=\omega(\log^2 n)$, the number 
of ``bad'' pairs with both vertices in $J_{\cal{M}}$ is {\it much smaller}
than the number of ``bad'' pairs with at least one vertex in $J_{\cal{M}}^c$.
To see why this {\it should} be true, consider any vertex $v\in J_{\cal{M}}$.
At least $90-o(1)$ \co{percent} of pairs $(v,\tilde w)$ are ``good'', just by the definition of $J_{\cal{M}}$.
This, and condition \eqref{eq: EtypEdgetoBigPart} in the definition of $\Event_{typ}$,
implies that for a vast majority of $\tilde w\in V(N_{\tilde\Gamma}(v))\setminus\{v\}$ with $\tilde w\neq f_v^{-1}(\tilde w)$,
the pair $(v,f_v^{-1}(\tilde w))$ must belong to the set $\cal{V}$, that is, at least $\frac{6}{10}p^2n$
of the common neighbors of $v$ and $f_v^{-1}(\tilde w)$ in $N_{\Gamma}(v)$ must be mapped to the same vertices of $\tilde\Gamma$.
But, according to Lemma~\ref{lem: whyV}, for any such $\tilde w$
the pair $(\tilde w,f_{\tilde w}^{-1}(v))$ must belong to $\cal{V}^c$.
However, for any $\tilde w$ satisfying the additional assumption $\tilde w\in J_{\cal{M}}$,
the set $\{u:\; (\tilde w, u) \in \cal{V}^c \}$ has a small size, according to Lemma~\ref{lem: McToVc}.
Hence, the set $\{ (v,\tilde w) \in \tilde{\cal{P}} \,:\, v \in J_{\cal{M}},  \tilde w \in J_{\cal{M}}\, \mbox{ and } f_{v}^{-1}(\tilde w)\neq \tilde w \}$ must be small as well.

The second assertion of the lemma essentially tells that, assuming $J_{\cal{M}}^c$ is small,
for a ``typical'' vertex $v\in J_{\cal{M}}^c$ a vast
majority of the pairs $(v,\tilde w)\in\tilde{\cal{P}}$ are such that $\tilde w\in J_{\cal{M}}$.
Note that analogous statement for the graph $\Gamma$ is a simple consequence of standard concentration properties:
given any set $U$ (whether fixed or random, dependent on $\Gamma$), the number of edges in $\Gamma_{U^c}$
cannot be much larger than $\frac{1}{2}|U^c|^2p$ whereas the number of edges in $\Gamma_{U,U^c}$
should be roughly of order $|U|\,|U^c|p$. Hence, if $U^c$ is small then for most pairs $(v,w)\in\cal{P}$ with $v\in U^c$,
we have $w\in U$. For the graph $\tilde\Gamma$, we do not have apriori knowledge of such properties.
Instead, we obtain the required statement by relying on the definition of $J_{\cal{M}}^c$ and
by reusing statistics of common neighbors. A crucial observation here is that for most pairs $(v,w)\in\cal{P}$
with $v\in J_{\cal{M}}$ and $w\in J_{\cal{M}}^c$, we should also have $f_v(w)\in J_{\cal{M}}^c$,
which is ultimately a consequence of Lemma~\ref{lem: whyV}.
On the other hand, the number of pairs $\{(v,w)\in\cal{P}:\;v\in J_{\cal{M}},\;w\in J_{\cal{M}}^c\}$,
is roughly of order $|J_{\cal{M}}|\,|J_{\cal{M}}^c|\,p= (n-|J_{\cal{M}}^c|)\,|J_{\cal{M}}^c|\,p$.
\begin{proof}[Proof of Lemma~\ref{lem: IVsturctureJ}] 
For convenience, for any $I \subset [n]$ we define
$\cal{M}(I) := \{ (v,w) \in \cal{M}\,:\, v \in I \}$ and $ \cal{M}^c(I):= \{ (v,w) \in \cal{M}^c\,:\, v\in I\}$ (and similarly for $\cal{V}(I)$ and $\cal{V}^c(I)$).
By Lemma \ref{lem: McToVc}, conditioned on $\Event_{typ}$, 
\begin{align} \label{eq: IVVcMc}
|\cal{V}^c(J_{\cal{M}})| \le \frac{ \log^2 n}{ p^2n} |\cal{M}^c(J_{\cal{M}})|. 
\end{align}
Next, for each $(v,w) \in \cal{M}^c(J_{\cal{M}}) \cap \cal{V}(J_{\cal{M}})$ with $ f_v(w)=\tilde{w} \in J_{\cal{M}}$, applying \eqref{eq: VNotMImpliesNotV} we get 
$ (\tilde{w}, f^{-1}_{\tilde{w}}(v)) \in \cal{V}^c(J_{\cal{M}})$. 
\co{Further, we claim that
the map $(v,w) \mapsto (\tilde w, f_{\tilde w}^{-1}(v))$ where $\tilde w := f_v(w)$, is a bijection on $\cal P$. Indeed, applying the correspondence twice to the pair $(v,w)$, we get 
$$
(v,w) \rightarrow (\tilde w, f_{\tilde w}^{-1}(v)) \rightarrow ( \tilde v, f_{\tilde v}^{-1}( \tilde w)),
$$
where 
$$ 
\tilde v = f_{\tilde w}( f_{\tilde w}^{-1}(v))=v,
$$
which implies $( \tilde v, f_{\tilde v}^{-1}(\tilde w)) = (v,w)$. The claim follows.}
Now, using injectivity of this map and \eqref{eq: IVVcMc} we conclude that 
\begin{align*} 
|\{ (v,w) \in \cal{M}^c(J_{\cal{M}}) \cap \cal{V}(J_{\cal{M}}) \, :\, f_v(w) \in J_{\cal{M}} \}|
\le | \cal{V}^c(J_{\cal{M}}) | \le \frac{\log^2 n}{p^2n} 
| \cal{M}^c( J_{\cal{M}})|.
\end{align*}
Applying \eqref{eq: IVVcMc} again we get
\begin{align} \label{eq: notMuchBadEdgeFromJgdtoJgd}
|\{ (v,w) \in \cal{M}^c(J_{\cal{M}}) \, :\, f_v(w) \in J_{\cal{M}} \}|
\le 2\frac{\log^2 n}{p^2n} | \cal{M}^c( J_{\cal{M}})|.
\end{align}
Note that since we assumed that 
$p^2n = \omega\big(  \log^2 n \big)$,
\eqref{eq: notMuchBadEdgeFromJgdtoJgd} implies that if $| \cal{M}^c(J_{\cal{M}})| > 0$, then $J_{\cal{M}}^c\neq \emptyset$.

Next, we will establish a quantitative relation between $|\cal{M}^c(J_{\cal{M}})|$ and $|J^c_{\cal{M}}|$,
which can be done by examining the set $\{ (\tilde{v}, \tilde{w}) \in \tilde{\cal{P}} \,:\, \tilde{w} \in J_{\cal{M}}^c \} $. First of all, by \eqref{eq: EtypDegree} in $\Event_{typ}$, 
\begin{align} \label{eq: IVedgesWithEndPointsinJbad}
    |\{ (\tilde{v}, \tilde{w}) \in \tilde{\cal{P}} \,:\, \tilde{w} \in J_{\cal{M}}^c \}| = \sum_{ \tilde{w} \in J^c_{\cal{M}} }
    \deg_{\tilde{\Gamma}}(\tilde{w})
    = \sum_{ \tilde{w} \in J^c_{\cal{M}}} \deg_\Gamma (\tilde{w})
    = (1 +o(1))|J_{\cal{M}}^c|pn.
\end{align}
Notice that the map $(v,w) \mapsto (v,f_v(w))$ from $\cal{P}$ 
to $\tilde{\cal{P}}$ is a bijection, whose inverse is $(\tilde{v}, \tilde{w}) \mapsto (\tilde{v}, f_{\tilde{v}}^{-1}(\tilde{w}))$. Therefore, we have
\begin{align*}
    |\{ (\tilde{v}, \tilde{w}) \in \tilde{\cal{P}} \,:\, \tilde{w} \in J_{\cal{M}}^c \}| &=  |\{ (v,w) \in \cal{P}\,:\, f_v(w) \in J^c_{\cal M} \}| \\
    &\ge |\{ (v,w) \in \cal{M}^c(J_{\cal M}) \,:\, f_v(w) \in J^c_{\cal M} \}|
    \ge  \Big(1-2\frac{\log^2 n}{p^2n} \Big) | \cal{M}^c(J_{\cal M})|,
\end{align*}
by \eqref{eq: notMuchBadEdgeFromJgdtoJgd}.
Together with the condition
$p^2n = \omega\big(\log^2 n \big)$ and \eqref{eq: IVedgesWithEndPointsinJbad}, this gives
\begin{align} \label{eq: deltaLambdaRelation}
| \cal{M}^c(J_{\cal M})| \le (1 + o(1)) |J^c_{\cal{M}}|\,pn.
\end{align}
Note that the above bound implies that if $|J^c_{\cal{M}}|=0$,
then $ |\cal{M}^c|=0$, and hence $\Gamma = \tilde{\Gamma}$.
Applying \eqref{eq: deltaLambdaRelation} and \eqref{eq: notMuchBadEdgeFromJgdtoJgd} we obtain the first statement in the lemma. 

It remains to prove the second statement, where we rely on the fact that $|J^c_{\cal M}|\le \delta n $.
Recall that 
$\Gamma_{J_{\cal M},J^c_{\cal M} }$ is the bipartite subgraph of $\Gamma$ on the vertex set $J_{\cal M} \sqcup J^c_{\cal M}$ where we keep only edges connecting $J_{\cal M}$ to $J^c_{\cal M}$.
By \eqref{eq: EtypEdgeToComplement} in $\Event_{typ}$ 
and the assumption that $ |J_{\cal M}^c| \le \delta n$, 
$$
|E_{\Gamma}(J_{\cal{M}}, J_{\cal{M}}^c)|  \ge \Big(1 - \frac{ \log n }{\sqrt{pn}}\Big) p |J_{\cal{M}}| |J_{\cal{M}}^c|
\ge 
\Big(1 - \frac{ \log n }{\sqrt{pn}}\Big) (1- \delta ) pn|J_{\cal{M}}^c|. 
$$
Next, 
\begin{align*}
   |\{ (v, \tilde w) \in \tilde{\cal P} 
    \,:\, v \in J_{\cal M}\mbox{, and } \tilde w \in J_{\cal M}^c \}|
= & |\{ (v,w) \in \cal{P} \,:\, v\in J_{\cal{M}}\mbox{, and } f_v(w) \in J_{\cal{M}}^c\}|  \\
\ge & |\{ (v,w) \in \cal{P}  \, :\, v \in J_{\cal{M}}, w \in J_{\cal{M}}^c, \mbox{ and } f_v(w) \in J_{\cal{M}}^c \}| \\
= & |E_{\Gamma}(J_{\cal{M}}, J_{\cal{M}}^c)| - |\{ (v,w) \in \cal{P}  \, :\, v \in J_{\cal M}, w \in J_{\cal{M}}^c, \mbox{ and } f_v(w) \in J_{\cal M} \}| \\
= & |E_{\Gamma}(J_{\cal{M}}, J_{\cal{M}}^c)| - |\{ (v,w) \in \cal{M}^c(J_{\cal M})\, :\, w \in J_{\cal{M}}^c\, \mbox{ and } f_v(w) \in J_{\cal M} \}|   \\
\ge & |E_{\Gamma}(J_{\cal{M}}, J_{\cal{M}}^c)|  
- 2\frac{ \log^2 n}{p^2n} | \cal{M}^c(J_{\cal M})|, 
\end{align*}
\co{where the last inequality follows by \eqref{eq: notMuchBadEdgeFromJgdtoJgd}.}
Together with \eqref{eq: deltaLambdaRelation},  this implies
\begin{align*}
 |\{ (v, \tilde w) \in \tilde{\cal P} 
    \,:\, v \in J_{\cal M}\mbox{, and } \tilde w \in J_{\cal M}^c \}|
   \ge &
\bigg[\Big(1 - \frac{ \log n }{\sqrt{pn}}\Big) (1- \delta ) -  (1+o(1))\cdot 2\frac{\log^2 n}{p^2n} \bigg] pn|J_{\cal{M}}^c| \\
 \ge &
(1-  2\delta ) |J_{\cal{M}}^c|pn \co{,}
\end{align*}
\co{where the last inequality holds for $n$ large enough.}
The second statement follows. 
\end{proof}

Next, we want to develop a procedure similar to the one used in {Step I}: more precisely,
we want to show that the assumption that $0< |J_{\cal{M}}^c| \le 
    \delta n$ necessarily induces a large number of {\it constraints} on the edges of $\Gamma$,
    which can be satisfied only with a very small probability.
    In the next lemma, we show that under the mentioned assumption, there is a vertex $v \in J_{\cal{M}}^c$
    with many constraints coming from the neighborhood $N_{{\Gamma}}(v)$.
\begin{lemma} \label{lem: IVEgood}
    Let $\delta\le 0.001$ be any fixed positive constant. Assuming that $p^2n=\omega(\log^2 n)$, for every sufficiently large $n$,
    conditioned on $\Event_{typ}\cap\{0< |J_{\cal{M}}^c| \le 
    \delta n\}$, there is $v \in J_{\cal{M}}^c$ such that 
    the set
    $$ I_v:= \{ \co{ w \snb{\Gamma} v } 
    \,:\,  f_v(w)\neq w,\, f_v(w) \in J_{\cal{M}} \}$$
    satisfies $|I_v|\ge \frac{1}{11}pn$,
    and such that the set 
    \begin{align} \label{eq: Egood}
        E_v := \Big\{ \{w,w'\} \in E(\Gamma_{I_v}) \, :\, f_v(w) \neq w, \, f_v(w')\neq w', \mbox{ and } 
         \{f_v(w), f_v(w')\} \in E(\Gamma) \Big\}
    \end{align}
    satisfies 
    \begin{align*}
         | E_v | \ge 
        (1-o(1) ) |E( \Gamma_{I_v} )|
        = (1- o(1) )  \frac{ |I_v|^2 }{2}p  
        \ge  0.001 n^2p^3.
    \end{align*}
    \end{lemma}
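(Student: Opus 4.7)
The proof would split into two parts: first, an averaging argument locates a vertex $v \in J_{\cal M}^c$ with $|I_v| \geq np/11$; second, for this $v$ I combine the standard lower bound on $|E(\Gamma_{I_v})|$ with an upper bound on ``bad'' edges to conclude.

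\emph{Part 1 (finding $v$).} I would start from the identity
\[
\sum_{v \in J_{\cal M}^c} |I_v| \;=\; |\cal M^c(J_{\cal M}^c)| \;-\; \bigl|\bigl\{(v, w) \in \cal M^c(J_{\cal M}^c) : f_v(w) \in J_{\cal M}^c\bigr\}\bigr|.
\]
The first summand exceeds $|J_{\cal M}^c|\,np/10$ directly from the definition of $J_{\cal M}^c$. For the second, the bijection $(v, w) \mapsto (v, f_v(w))$ between $\cal P$ and $\tilde{\cal P}$ identifies it with a subset of $\{(v, \tilde w) \in \tilde{\cal P} : v \in J_{\cal M}^c,\,\tilde w \in J_{\cal M}^c\}$. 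The total number of ordered $\tilde{\Gamma}$--edges emanating from $J_{\cal M}^c$ is $(1+o(1))|J_{\cal M}^c|np$ by \eqref{eq: EtypDegree}, while \eqref{eq: crossMatch} (with the roles of the two coordinates swapped) extracts $(1 - 2\delta)|J_{\cal M}^c|np$ such edges landing in $J_{\cal M}$; subtracting bounds the remaining count by $(2\delta + o(1))|J_{\cal M}^c|np \leq 0.003\,|J_{\cal M}^c|np$. Combining, $\sum_{v \in J_{\cal M}^c} |I_v| \geq 0.097\,|J_{\cal M}^c|np$, so by averaging some $v \in J_{\cal M}^c$ satisfies $|I_v| \geq 0.097\,np > np/11$.

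\emph{Part 2 (bounding $|E_v|$).} For the chosen $v$, \eqref{eq: EtypSubgraphsEdgeCounts} and $p \gg n^{-1/2}$ give $|E(\Gamma_{I_v})| = (1 - o(1))\binom{|I_v|}{2} p$. A ``bad'' edge is one $\{w, w'\} \in E(\Gamma_{I_v})$ with $\{f_v(w), f_v(w')\} \notin E(\Gamma)$. Since $f_v$ is a graph isomorphism on $N_\Gamma(v)$, bad edges correspond bijectively to edges $\{\tilde w, \tilde w'\}$ of $\tilde{\Gamma}$ with both endpoints in $J_v := f_v(I_v) \subset J_{\cal M}$ but $\{\tilde w, \tilde w'\} \notin E(\Gamma)$. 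For every such $\{\tilde w, \tilde w'\}$ we necessarily have $f_{\tilde w}^{-1}(\tilde w') \neq \tilde w'$ (otherwise $(\tilde w, \tilde w') \in \cal M$, forcing $\{\tilde w, \tilde w'\} \in E(\Gamma)$), so the number of bad edges is at most the quantity bounded in \eqref{eq: likelyMatched}, namely $O(\log^2 n \cdot |J_{\cal M}^c|/p)$. Putting the two bounds together yields $|E_v| \geq (1 - o(1))|E(\Gamma_{I_v})| \geq 0.001\,n^2 p^3$, the last inequality following from $|I_v| \geq np/11$ (which gives $\binom{|I_v|}{2}p \geq (1-o(1))\,n^2p^3/242$).

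\emph{Main obstacle.} The delicate quantitative point is the comparison between $|E(\Gamma_{I_v})| = \Theta(n^2 p^3)$ and the bad-edge bound $O(\log^2(n)\,|J_{\cal M}^c|/p)$. To absorb the second into the $(1-o(1))$ factor one needs $|J_{\cal M}^c| = o(n^2 p^4/\log^2 n)$; this is what forces the bound $|J_{\cal M}^c| \leq \delta n$ to be coupled with the theorem's polylogarithmic lower bound on $p$ (and the $\Cgl$ choice propagated through the typicality events), so that the polylog factor $\log^C n$ is large enough to make the comparison go through in the entire range $n^{-1/2}\log^C n \leq p \leq c$. Once this quantitative tension is resolved, everything else is routine.
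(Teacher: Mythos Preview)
Your Part~1 is fine (and in fact closely mirrors the paper's first step, which instead isolates a subset $I\subset J_{\cal M}^c$ with $|I|>\tfrac12|J_{\cal M}^c|$ in which \emph{every} $v$ has $|I_v|\ge np/11$). The gap is in Part~2, and it is exactly the ``main obstacle'' you flag but then do not actually resolve.

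For a \emph{single} vertex $v$ you bound the number of bad edges by $|E'|\le 3\log^2(n)\,|J_{\cal M}^c|/p$. But the lemma only assumes $|J_{\cal M}^c|\le \delta n$ and $np^2=\omega(\log^2 n)$; at the bottom of the range $p\asymp n^{-1/2}\log^C n$ one has $n^2p^4/\log^2 n=\Theta(\log^{4C-2}n)$, which is merely polylogarithmic, while $|J_{\cal M}^c|$ can be as large as $\delta n$. So the inequality $|J_{\cal M}^c|=o(n^2p^4/\log^2 n)$ that you need simply fails, and no choice of $C$ in the hypothesis $p\ge n^{-1/2}\log^C n$ rescues it. Your closing sentence asserting that the polylog factor ``is large enough to make the comparison go through'' is incorrect.

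The paper sidesteps this by \emph{not} fixing $v$ before controlling bad edges. It first obtains the set $I$ of size $\ge\tfrac12|J_{\cal M}^c|$ described above, and then sums the bad-edge count over \emph{all} $v\in I$: each $\{\tilde w,\tilde w'\}\in E'$ can contribute only for those $v$ that are common $\tilde\Gamma$--neighbours of $\tilde w,\tilde w'$, of which there are at most $(1+o(1))np^2$. Hence the total bad count is $\le |E'|\cdot (1+o(1))np^2=O(\log^2(n)\,np\,|J_{\cal M}^c|)$, and dividing by $|I|\ge |J_{\cal M}^c|/2$ the factor $|J_{\cal M}^c|$ cancels, leaving an average of $O(\log^2(n)\,np)=o(n^2p^3)$ bad edges per vertex. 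Averaging then produces the desired $v$. Your argument can be repaired along the same lines: upgrade Part~1 (e.g.\ via your lower bound on $\sum_v|I_v|$ together with the trivial upper bound $|I_v|\le(1+o(1))np$) to get a positive-fraction subset of $J_{\cal M}^c$ with large $I_v$, and only then average the bad edges over that subset.
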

    \begin{proof}
    Let $$I:=  \{ v \in J_{\cal{M}}^c\, :\, |\{ (v,\tilde{w}) \in \tilde{\cal{P}}\,: \, \tilde{w} \in  J_{\cal{M}}^c \}|
    \le 5\delta pn \}.$$ 
    By \eqref{eq: EtypDegree} from $\Event_{typ}$, every vertex of $\tilde{\Gamma}$ is at most $(1 + \log^{-\Cgl}n)pn$. 
    In view of this degree bound,  
    $$ |\{ (v, \tilde{w}) \in \tilde{\cal{P}} \, :\, v \in J_{\cal{M}}^c,\;\tilde w\in[n] \}| = \sum_{ v \in J_{\cal{M}}^c} \mbox{deg}_{\tilde{\Gamma}}(v) \le (1 + o(1))pn|J_{\cal{M}}^c|. $$
    Together with \eqref{eq: crossMatch}, this gives
    \begin{align*}
        |J_{\cal M}^c\setminus I| \cdot 5\delta pn \le 
        \sum_{v \in J_{\cal M}^c } |\{ (v,\tilde w) \in \tilde{ \cal P} \,: \, \tilde{w} \in J_{\cal M}^c \}|   
        \le (1 + o(1)) pn |J^c_{\cal M}| - (1-2\delta ) |J^c_{\cal M}|pn
        = (2\delta + o(1)) |J^c_{\cal M}|pn,
    \end{align*} 
    and therefore
    \begin{align} \label{eq: IComparableToJMc}
     |I| > \frac{1}{2} |J_{\cal{M}}^c|.
    \end{align}
    In particular, the set $I$ is non-empty. Take any $v\in I$.
    The set 
    $$ I_v= \{ \co{w \snb{\Gamma} v } \,:\,  f_v(w)\neq w,\, f_v(w) \in J_{\cal{M}} \}$$
    satisfies 
    \begin{align*}
        |I_v| \ge& |\{ w \snb{\Gamma} v %\in V(N_\Gamma(v)) \backslash \{v\}
        \,:\,  f_v(w)\neq w \}| - 
            |\{ w \snb{\Gamma} v% \in V(N_\Gamma(v)) \backslash \{v\}
            \, :\, f_v(w)\in J_{\cal{M}}^c \}| \\
        \ge & \frac{1}{10}pn - 5 \delta pn \ge \frac{1}{11}pn, 
    \end{align*}
    with the assumption that $\delta \le 0.001$.
    By \eqref{eq: EtypSubgraphsEdgeCounts},
    \begin{equation}\label{1-398710981709}
        |E(\Gamma_{I_v})| \geq ( 1 - o(1)) { |I_v| \choose 2} p 
        = ( 1- o(1)) \frac{|I_v|^2}{2}p \ge (1 - o(1)) \frac{1}{242} n^2p^3.
    \end{equation}
    Next, we want to argue that there exists $v \in I$ so that 
    for a ``typical'' pair $w,w' \in I_v$ with $ \{w,w'\} \in E(\Gamma)$, we also have
    $ \{f_v(w),\, f_v(w') \} \in E(\Gamma)$. \co{Note that, if $\{f_v(w), f_v(w')\} \notin E(\Gamma)$, then 
    $\{f_v(w), f_v(w')\}$ is an unordered pair satisfying $f_v(w), f_v(w') \in J_{\cal M}$, \co{$f_v(w)\snb{\tilde \Gamma}
    f_v(w')$}, and $f_v(w) \stackrel{\Gamma}{\nsim}f_v(w')$. Now we consider the set 
    $$
        E' := \big\{ \{ \tilde{w}, \tilde{w}'\} \in E(\tilde{\Gamma})\, :\, 
        \tilde{w} , \tilde{w}' \in J_{\cal{M}} ,\, 
     \{ \tilde{w}, \tilde{w}'\} \notin E(\Gamma) \big\}. 
    $$
    Then, by the definition of $E'$ and $I_v$ we have 
    \begin{align} \label{eq: E'00}
        & \big\{ (v, \{w, w'\}) \,: \, v\in I,  w,w' \in I_v, w \snb{\Gamma} w', \mbox{ and } f_v(w) 
        \stackrel{\Gamma}{\nsim} f_v(w') \big\}   \nonumber \\ 
     = &  \big\{ (v, \{w, w'\}) \,: \, v\in I, w,w' \in I_v,  \mbox{ and }
      \{f_v(w), f_v(w')\} \in E' 
     \big\}  \nonumber \\
       \subset  &  \big\{ (v, \{w, w'\}) \,: \, w \snb{\Gamma} v,w' \snb{\Gamma} v,
      \mbox{ and } \{f_v(w), f_v(w')\} \in E' 
     \big\}.
    \end{align}
     The cardinality of the set in \eqref{eq: E'00} can be expressed as
   \begin{align} \label{eq: E'01}
   \big| \big\{ (v, \{w, w'\}) &\,: \, w \snb{\Gamma} v,w' \snb{\Gamma} v,
      \mbox{ and } \{f_v(w), f_v(w')\} \in E' 
     \big\} \big| \nonumber \\
    =  &  \big|\big\{ (v, \{\tilde w, \tilde w'\}) \,: \,  
     \tilde w \snb{\tilde \Gamma} v, \tilde w' \snb{\tilde \Gamma} v,  \mbox{ and }
      \{\tilde w, \tilde w'\} \in E' 
     \big\} \big|. 
   \end{align}    
   
    For any $(\tilde w, \tilde w') \in E(\tilde \Gamma)$, 
    the conditions $\tilde w \snb{\tilde \Gamma}  \tilde w '$ and $\tilde w \stackrel{\Gamma}{\nsim} \tilde w'$ imply that $f^{-1}_{\tilde w}(\tilde w') \neq \tilde w'$ and $f^{-1}_{\tilde w'}(\tilde w) \neq \tilde w$ (we remark that
    the converse of this assertion is not necessarily true in general%, while it can be shown when the 1-neighbors of $\tilde w$ and of $\tilde w'$ have trivial automorphism group
    ).} 
    Together with \eqref{eq: likelyMatched},
    \begin{align} \label{eq: E'size}
    |E'|\le  &
    \big| \big\{ \{ \tilde{w}, \tilde{w}'\} \in E(\tilde{\Gamma}) \, :\,  
        \tilde{w}, \tilde{w}' \in J_{\cal{M}},\, 
    f^{-1}_{\tilde{w}}( \tilde{w}')\neq \tilde{w}',\,
        \mbox{ and }  f^{-1}_{\tilde{w}'}(\tilde{w})\neq \tilde{w}\big\} \big|  
    \le 3 \frac{ \log^2 n}{ p} |J_{\cal{M}}^c|.
    \end{align}
    For every pair $\{ \tilde{w}, \tilde{w}'\} \in E(\tilde{\Gamma})$, \co{we claim} 
    there are at most $(1+o(1))p^2n$ common neighbors of $\tilde w$ and $\tilde w'$ in $\tilde\Gamma$. 
    Indeed, since $f_{\tilde{w}}$ is an isomorphism of $N_{\Gamma}(\tilde{w})$ and 
    $N_{\tilde{\Gamma}}(\tilde{w})$, the number of common neighbors of 
    $\{\tilde{w},\tilde{w}'\}$ in $\tilde{\Gamma}$ is the same as the number of common neighbors 
    of $\{ \tilde{w}, f^{-1}_{\tilde{w}}(\tilde{w}')\}$ in $\Gamma$. Since we conditioned on 
    $\Event_{typ}$, the latter is bounded above by $(1 + o(1)) p^2n$, by \eqref{eq: EtypCommonNeighbors2}. 
    \co{ Hence, combining \eqref{eq: E'00}, \eqref{eq: E'01}, and \eqref{eq: E'size} we get 
    \begin{align*}
      \big| \big\{ (v, \{w, w'&\}) \,: \, v\in I,  w,w' \in I_v, w \snb{\Gamma} w', \mbox{ and } f_v(w) \stackrel{\Gamma}{\nsim} f_v(w') \big\} \big| \\
        \le &
        \big|\big\{ (v, \{\tilde w, \tilde w'\}) \,: \,  
     \tilde w \snb{\tilde \Gamma} v, \tilde w' \snb{\tilde \Gamma} v,  \mbox{ and }
      \{\tilde w, \tilde w'\} \in E' 
     \big\} \big| 
        \le  3\frac{ \log^2 n}{p} |J_{\cal{M}}^c|(1+o_n(1))p^2n=o(|I|n^2p^3),
    \end{align*}
    where the last inequality holds due to \eqref{eq: IComparableToJMc} and the assumption $p^2n = \omega(\log^2(n))$. } 
    On the other hand, by \eqref{1-398710981709},
    $$
    \big| \big\{ (v,\{w,w'\}) \,:\, v\in I, w,w' \in I_v, \{w,w'\} \in E(\Gamma)\big\}\big|
    =\Omega(|I|n^2p^3).
    $$
    Consequently, there exists $v \in I$ with 
    \begin{align*}
    |\{ \{w,w'\} \,:\, w,w' \in I_v, \{w,w'\} \in E({\Gamma}) \mbox{ and } \{f_v(w),f_v(w')\} \in E(\Gamma)\}| \ge (1-o(1)) |E(\Gamma_{I_v})|. 
    \end{align*}
    %since $4\frac{ \log^2 n}{p^2n} n^2p^3 = o(n^2p^3)$.
    \co{The} lemma follows since $ |I_v| \ge \frac{1}{11}pn$.
    \end{proof}

    \begin{lemma} \label{lem: IVEtooGood}
    Let $\delta\le 0.001$ be any fixed positive constant. Assuming that $p^2n=\omega(\log^2 n)$, and that $n$ is sufficiently large,
    condition on $\Event_{typ}\cap\{0< |J_{\cal{M}}^c| \le 
    \delta n\}$. Let $v$ be a \co{vertex} from the statement of Lemma \ref{lem: IVEgood}, and $E_v$ the set
    defined in that lemma. Then    
        $$
            \big| E_{v} \cup f_v(E_v) \big| \ge \frac{3}{2} |E_{v}|,
        $$
        where $f_v(E_v) := \big\{ \{f_v(w),\, f_v(w')\} \,:\, \{w,w'\} \in E_v\big\}$.
    \end{lemma}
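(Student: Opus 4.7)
The goal is to show $|E_v \cup f_v(E_v)| \ge \frac{3}{2}|E_v|$. Since $f_v$ acts injectively on edges (because $f_v$ is a bijection, two distinct pairs produce two distinct image pairs), we have $|f_v(E_v)| = |E_v|$, so by inclusion--exclusion the desired bound is equivalent to
\[
|E_v \cap f_v(E_v)| \le \tfrac{1}{2}|E_v|.
\]
This is the inequality I plan to establish.

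First, I would characterize $E_v \cap f_v(E_v)$. An edge $\{x,y\}$ lies in $f_v(E_v)$ precisely when $\{f_v^{-1}(x), f_v^{-1}(y)\} \in E_v$, which forces $x, y \in f_v(I_v)$ and $\{f_v^{-1}(x), f_v^{-1}(y)\} \in E(\Gamma)$. Combined with the requirement $\{x,y\} \in E_v \subset E(\Gamma_{I_v})$ (so that $x, y \in I_v$), we obtain
\[
|E_v \cap f_v(E_v)| \le \big|\big\{\{x,y\} \subset L : \{x,y\}, \{f_v^{-1}(x), f_v^{-1}(y)\} \in E(\Gamma)\big\}\big|,
\]
where $L := I_v \cap f_v(I_v) \subset V(N_\Gamma(v)) \setminus \{v\}$.

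The main step is to estimate the right-hand side via condition \eqref{eq: EtypLocalSubgraphMap} from $\Event_{typ}$, applied at the vertex $u = v$ with the bijection $g = f_v^{-1}$. The lemma requires the source $I$ and its image $J = g(I)$ to be disjoint subsets of $V(N_\Gamma(v)) \setminus \{v\}$, but $L$ and $f_v^{-1}(L)$ generally overlap on the set $M := L \cap f_v^{-1}(L)$. My plan is to peel off this obstruction iteratively: set $L' := L \setminus M$, and note that $L' \cap f_v^{-1}(L') = \emptyset$, since $w \in L'$ forces $f_v(w) \notin L$. Provided $|L'| \ge np/\log n$, \eqref{eq: EtypLocalSubgraphMap} then yields at most $0.001\, |L'|^2 p/2$ pairs $\{x,y\} \subset L'$ satisfying both edge constraints (in the degenerate case $|L'| < np/\log n$, the trivial bound $\binom{|L'|}{2}$ is already negligible compared with $|E_v|$ under the assumption $np^2 = \omega(\log^2 n)$). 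For pairs with at least one endpoint in $M$, I would iterate the same procedure on $M$, setting $M^{(1)} := M \cap f_v^{-1}(M)$, applying the lemma to $M \setminus M^{(1)}$, and proceeding recursively, obtaining a partition $L = L^{(0)} \sqcup L^{(1)} \sqcup \cdots$ with each level controlled by \eqref{eq: EtypLocalSubgraphMap}. Cross-level pairs with endpoints in different pieces $L^{(i)}$, $L^{(j)}$ are absorbed by applying \eqref{eq: EtypLocalSubgraphMap} to appropriate unions $L^{(i)} \cup L^{(j)}$ whose $f_v^{-1}$-image can be verified disjoint from the union. Summing all contributions, the total bound takes the form $|E_v \cap f_v(E_v)| \le c \cdot |L|^2 p/2$ for an absolute constant $c$ which, thanks to the generous factor $0.001$ in \eqref{eq: EtypLocalSubgraphMap}, can be taken smaller than any fixed positive number (in particular $c < 0.01$).

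Finally, since $L \subset I_v$ and Lemma~\ref{lem: IVEgood} provides $|E_v| \ge (1-o(1))|I_v|^2 p/2$, we conclude
\[
|E_v \cap f_v(E_v)| \le c\, \frac{|L|^2 p}{2} \le c(1 + o(1))\, |E_v| \le \tfrac{1}{2}|E_v|,
\]
as required. The main technical obstacle will be the bookkeeping of the iterative partition and the verification of disjointness for the cross-level pair contributions; the slack provided by the constant $0.001$ in \eqref{eq: EtypLocalSubgraphMap} comfortably absorbs the losses introduced at each level of the recursion.
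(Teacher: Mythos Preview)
Your peeling argument has a genuine gap. The sequence $L\supset M\supset M^{(1)}\supset\cdots$ stabilizes at a (possibly large) core $M^{(\infty)}=\bigcap_k M^{(k)}$ on which $f_v$ acts as a permutation, and nothing in your procedure bounds edges inside this core. Concretely, if $f_v$ restricted to $I_v$ happens to be a product of transpositions (which is not excluded a priori), then $f_v(I_v)=I_v$, hence $L=I_v=M=M^{(1)}=\cdots$, and your recursion peels off nothing at all; the entire set $E_v\cap f_v(E_v)$ would have to be bounded on this core and you have no tool for it. The claim about cross-level pairs is also not correct as stated: the union $L^{(0)}\cup L^{(1)}\cup\cdots$ is all of $L$, and $f_v^{-1}(L)$ contains $M$, so the required disjointness fails generically.

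The key idea you are not using is the condition $f_v(w)\ne w$ for every $w\in I_v$, which comes directly from the definition of $I_v$. The paper's proof runs by contradiction: if $|E_v\cap f_v(E_v)|>\tfrac12|E_v|$, then a degree count forces $|I_v\cap f_v(I_v)|\ge cnp$, so one can find $I_1',I_2'\subset I_v$ of size $\ge cnp$ with $f_v\!\restriction_{I_1'}$ a bijection onto $I_2'$. Now the fixed-point-free property is used in an explicit inductive (greedy) construction to extract $I_1''\subset I_1'$ and $I_2''=f_v(I_1'')$ of size $\ge \tfrac{c}{4}np$ with $I_1''\cap I_2''=\emptyset$. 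A single application of \eqref{eq: EtypLocalSubgraphMap} to the disjoint pair $(I_1'',I_2'')$ then contradicts the fact that almost all edges of $\Gamma_{I_1''}$ lie in $E_v$ and are therefore preserved by $f_v$. In short, the disjointness needed for \eqref{eq: EtypLocalSubgraphMap} must be manufactured from the absence of fixed points of $f_v$ on $I_v$, not from a peeling that ignores the permutation structure of $f_v$.
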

\begin{proof}
    The proof is by contradiction. Assume that $\big| E_{v} \cup f_v(E_v) \big| < \frac{3}{2} |E_{v}|$.
    There are two steps in the argument. First, setting \co{ 
    $$ I_v= \{ w \snb{\Gamma} v \,:\,  f_v(w)\neq w,\, f_v(w) \in J_{\cal{M}} \}$$
    as in the previous lemma,}
    we derive a lower bound on the size of 
    $I_v \cap f_v(I_v)$ in terms of the size of $E_{v} \cap f_v(E_v)$, which is at least $\frac{1}{2}|E_v|$.
    Second, we show that if 
    $I_v \cap f_v(I_v)$ is sufficiently large 
    %(keep in mind that $f_v(w)\neq w$ for $w\in I_v$), 
    then there exist two disjoint induced subgraphs
    of $N_{\Gamma}(v)$ with vertex set cardinalities proportional to $pn$, which are close to being isomorphic. However, such scenario is not observable in a typical realization of $\Gamma$!
    
    To start, \co{ suppose $w \in I_v$ and there exists $w'$ such that $\{w,w'\} \in f_v(E_v)$. By definition of $E_v$, there exist $u,u' \in I_v$ satisfying $f_v(u)=w$ and $f_v(u')=w'$, which implies $w \in f_v(I_v)$. Hence, 
    }
    \begin{align*}
        |I_v \cap f_v(I_v)|
    \ge& |\{ w \in I_v\,:\, \exists w' \mbox{ such that } \{w,w'\} \in f_v(E_v) \}| \\
    \ge& |\{ w \in I_v\,:\, \exists w' \mbox{ such that } \{w,w'\} \in E_v\cap f_v(E_v) \}|. 
    \end{align*} 
    For each \co{$w\snb{\Gamma}v$}, %\in V(N_{\Gamma}(v)) \backslash \{v\}$, 
    the degree of $w$ in $N_{{\Gamma}}(v)$ is at most $(1+o(1))p^2n$ 
    by \eqref{eq: EtypDegree} from $\Event_{typ}$. 
    Thus, each vertex in $I_v$ is an endpoint of at most 
    $(1+o(1))p^2n$ edges in $E_{v}\cap f_v(E_v)$. 
    Therefore, 
    $$
    |I_v \cap f_v(I_v)|\geq
    |\{ w \in I_v\,:\, \exists w' \mbox{ such that } \{w,w'\} \in E_v\cap f_v(E_v)  \}| \ge \frac{ |E_v\cap f_v(E_v)|}{(1+o(1))p^2n}
    \ge cpn
    $$
    for some universal constant $c>0$, where the last inequality follows 
    from Lemma~\ref{lem: IVEgood}.
    
    \co{For} the second part of the proof, we construct a bijection with the required properties.
    Observe that the above bound implies that there exists a pair of subsets 
    $I_1',\, I_2' \subset I_v$ such that  $|I_1'|=|I_2'|\ge cpn$ and such that
    $f_v$ restricted to $I_1'$ is a bijection onto $I_2'$. 
    
    \medskip
    {\bf Claim:} Given the sets $I_1',I_2'$ with above properties, there exist $I_1''\subset I_1'$ and $I_2'' \subset I_2'$ such that 
    $|I_1''|=|I_2''| \ge \frac{c}{4}pn$, 
    $I_1''\cap I_2'' = \emptyset$, and such that $f_v$ restricted to $I_1''$ is a bijection onto $I_2''$.
    
    We will verify the claim by relying on the condition 
    $f_v(w) \neq w$ for every $w \in I_v$. We will apply an inductive argument for \co{the} construction. %Further, this can be achieved by induction. 
    \co{For the base case}, there exist singletons $\{w_1\} \subset I_1'$ and $\{u_1\}\subset I_2'$  such that 
    $f_v(w_1)=u_1$ and $w_1\neq u_1$. 
    At $k$--th step of the induction ($k<|I_1'|/4$), we assume that
    we have found $k$--subsets $\{w_1,w_2, \dots, w_k\} \subset I_1'$  and $\{u_1,\dots, u_k\} \subset I_2'$ satisfying
    \begin{align*}
        \{w_1, \dots, w_k\} \cap \{u_1,\dots, u_k\} =& \emptyset, 
        \, \forall\; i \in [k], f_v(w_i)=u_i.
    \end{align*}
    Observe that, the cardinality of \co{both} of the sets
    \begin{align*}
    &\big\{(w,f_v(w)) \,:\, w \in I_1' \backslash \{w_1,\dots, w_k\},\, 
        \mbox{ and } f_v(w) \notin \{w_1,\dots, w_k\} \big\}\quad \mbox{ and }\\
    &\big\{(f_v^{-1}(u),u) \,:\, u \in I_2' \backslash \{u_1,\dots, u_k\},\, 
        \mbox{ and } f^{-1}_v(u) \notin \{u_1,\dots, u_k\} \big\}
    \end{align*}
    is at least $|I_1'| - 2k$. Since both sets are contained in $\{ (w,u)\,:\, w \in I_1', u=f_v(w)\}$,
    which has size $|I_1'|$, the intersection of the above two sets has size at least 
    $$  |I_1'|- 2k + |I_1'|-2k - |I_1'| = |I_1'|-4k > 0.$$
    Hence, there exists a pair $\{w,u\}$ with $w \in I_1'$ and $u\in I_2'$ satisfying 
    $\{w, u\} \cap \{w_1,\dots, w_k, u_1, \dots, u_k\} = \emptyset $ and 
    $ f_v(w)=u$. Notice that the $f_v(w)=u$ implies that $w\neq u$. Now,  
    $\{w_1,\dots, w_k,w_{k+1}=w\} \subset I_1'$ and $\{u_1,\dots, u_k,u_{k+1}=u\} \subset I_2'$ 
    are two disjoint subsets satisfying $f_v(w_i) = u_i$ for $i \in [k+1]$. Hence, the claim is proved by induction.

    Finally, we need to show that the existence of the sets $I_1''$ and $I_2''$ constructed above contradicts
    the description of the event $\Event_{typ}$.
    Observe that for any pair $\{w,w'\} \in E(\Gamma_{I_1''}) \cap E_v$, we have $\{f_v(w), f_v(w')\} \in E(\Gamma_{I_2''})$. 
    Since $I_1'' \subset V(N_{\Gamma}(v))\backslash \{v\}$, and
    $|I_1''|\ge \frac{c}{4}pn$, the condition
    \eqref{eq: EtypSubgraphsEdgeCounts} from $\Event_{typ}$ implies that 
    \begin{align} \label{eq: EGammaI1''}
    |E(\Gamma_{I_1''})| = (1 + o(1))\frac{1}{2}|I_1''|^2p \ge (1+o_n(1)) \frac{c}{8}n^2p^3. 
    \end{align}
    On the other hand, \co{ $I_1'' \subset I_v$ implies $E(\Gamma_{I_1''})\backslash E_v \subset E(\Gamma_{I_v}) \backslash E_v$. By Lemma \ref{lem: IVEgood}, the set $E_v \subset E_{\Gamma_{I_v}}$ satisfies 
    $ |E_v| \ge (1-o_n(1)) |E_{\Gamma_{I_v}}|$. Therefore, 
   \begin{align*} 
   |E(\Gamma_{I_1''}) \backslash E_v| \le  
    |E(\Gamma_{I_v}) \backslash E_v| = o_n( |E(\Gamma_{I_v})|). 
   \end{align*} 
    By the conditions \eqref{eq: EtypSubgraphsEdgeCounts} and \eqref{eq: EtypDegree} from $\Event_{typ}$, 
    $$
     |E(\Gamma_{I_v})| \le (1+o_n(1))\frac{1}{2}|I_v|^2p \le (1+o_n(1))\frac{1}{2}|V(N_{\Gamma}(v))|^2p 
     \le (1+o_n(1)) \frac{1}{2} n^2p^3
    $$
   where last equality holds in view of \eqref{eq: EtypDegree}. 
   Combined with \eqref{eq: EGammaI1''}, we obtain, 
   $$
    |E(\Gamma_{I_1''}) \backslash E_v| = o_n( E( \Gamma_{I_1''})).     
   $$
   
    We conclude that 
    \begin{align*}
        |\{ \{w,w'\} \in E(\Gamma_{I_1''}) \, :\, \{f_v(w), f_v(w')\} \in E(\Gamma_{I_2''})\}| 
        &\ge \big| \big\{ \{ w,w' \} \in E(\Gamma_{I_1''}) \cap E_v\big\}\big| \\
        &= ( 1- o(1)) |E(\Gamma_{I_1''})|
         \ge ( 1- o(1)) \frac{|I_1''|^2}{2} p.
    \end{align*}
    }
    But the last relation violates condition \eqref{eq: EtypLocalSubgraphMap} from $\Event_{typ}$, 
    which leads to contradiction. 
   \end{proof}

\bigskip

Now, everything is ready to complete the proof of the main result.
\begin{proof}[Proof of Theorem~\ref{309847109847}]
Fix any positive constant $\delta \le 0.001$.
We will assume that $p^2n=\omega(\log^{3+4\Cgl}n)$ and $2ep\leq \frac{\delta}{20C_3%C_{\ref{0983701984721-94871-}}
}$, \co{where the constant $C_3$ is taken from Proposition~\ref{0983701984721-94871-}.}
Recall that $\Prob(\Event_{typ})\geq 1-n^{-\omega(1)}$, by Proposition~\ref{prop: GammaTypical}.
Next, applying the results we got in Steps I, II, and III, we obtain an estimate on the probability of $ \Event_3\big( \frac{1}{20}\delta\big) \cap \Event_{typ}$:
\begin{align*}
    \Prob\Big( \Event_3\Big( \frac{1}{20}\delta\Big) \cap \Event_{typ} \Big) \ge & 
    \Prob \Big( \Event_2\Big( \frac{1}{20C_3%C_{\ref{0983701984721-94871-}}
    }\delta \Big) \cap \Event_{typ} \Big) - n^{-\omega(1)} &
    & \mbox{ by Proposition } \ref{0983701984721-94871-} \\
    \ge& \Prob \bigg( \Event_1 \bigg( \Big( \frac{1}{20C_3%C_{\ref{0983701984721-94871-}}
    C_2%{\ref{138413-4982174-29847}}
    } 
        \delta \Big)^3 \bigg) \cap \Event_{typ} \bigg) - n^{-\omega(1)} & & \mbox{ by Proposition } \ref{138413-4982174-29847} \\
    \ge& 1 - n^{-\omega(1)}. & & \mbox{ by Proposition }\ref{19741094720987098},
\end{align*}
\co{where the constant $C_2$ comes from Proposition~\ref{138413-4982174-29847}.}
Further, observe that conditioned on the event $\Event_3( \frac{1}{20}\delta)\cap \Event_{typ} $, 
$$
    \frac{1}{10} pn |J_{\cal M}^c | \le  \sum_{v\in J_{\cal M}^c} |\{w:\; (v,w) \in \cal{M}^c\}| \le (1+o(1))\frac{1}{20}\delta n^2p
    \leq \frac{1}{10}\delta n^2p,
$$
and thus 
$
    |J_{\cal M}^c | \le \delta n.
$
Now, we estimate $\Prob\{ \Gamma \neq \tilde{\Gamma}\}$ by intersecting the event $\{\Gamma \neq \tilde{\Gamma}\}$
with $\Event_{typ}$ and $\Event_3 \big( \frac{1}{20} \delta  \big)$:
\begin{align*}
    \Prob\{ \Gamma \neq \tilde{\Gamma} \} &\le n^{-\omega(1)}
    + \Prob \Big( \{\Gamma \neq \tilde{\Gamma}\} \cap  \Event_{typ} \cap \Event_3 \Big( \frac{1}{20}\delta \Big) \Big) \\
    &\le  n^{-\omega(1)}
    + \Prob \Big( \{ \Gamma \neq \tilde{\Gamma}\} \cap \Event_{typ} \cap \{ |J_{\cal{M}}^c|\le \delta n\} \Big) \\
    &\le n^{-\omega(1)}
    + \Prob \Big( \{0<|J_{\cal{M}}^c|\le \delta n \} \cap \Event_{typ}  \Big),
\end{align*}
where we used that $ \Event_{typ} \cap \{\Gamma \neq \tilde{\Gamma}\}$ is contained in  $\{|J_{\cal{M}}^c|>0\}$,
in view of Lemma \ref{lem: IVsturctureJ}.

At this stage we apply a counting argument using data structures similar
to the ones employed in the proof of Proposition~\ref{19741094720987098}.
Let $\cal{D}'$ be the collection of all data structures of the form
$$
    (v, \quad \bar{I}_v , \quad \bar{f}_v : \bar{I}_v \mapsto [n],\quad \bar{E}_v),
$$
where
\begin{itemize}
\item $v$ is any index in $[n]$,
\item $ \bar{I}_v$ is any subset of $[n]$
with  $ \frac{1}{11}pn \le |\bar{I}_v| \le 2pn$, 
\item $\bar{E}_v$ is any subset of unordered pairs of elements of $\bar{I}_v$
with $|\bar{E}_v| \geq r := \lfloor 0.001n^2p^3 \rfloor$, 
\item $\bar f_v$ is any injective mapping from $\bar{I}_v$
into $[n]$ such that $|\bar{E}_v \cup  \bar{f}_v(\bar{E}_v) | \ge \frac{3}{2} |\bar{E}_v|. $
\end{itemize}
Elements of $\cal{D}'$ are meant to give a partial description
of pairs of $1$--neighborhoods of $\Gamma$ and $\tilde\Gamma$:
given $D'=(v, \bar{I}_v ,\bar{f}_v,\bar{E}_v) \in \cal{D}'$,
$v$ is the center of the neighborhoods, 
$\bar{I}_v$ and $\bar{E}_v$ are possible realizations
of random sets $I_v$ and $E_v$ defined in Lemma \ref{lem: IVEgood} for $v$,
and $\bar{f}_v$ would correspond to a restriction of the isomorphism $f_v$ to $I_v$.
Any given data structure $D'$
may or may not give a correct description of $\Gamma,\tilde\Gamma$, depending on a realization of the graphs.

\HL{% Each $D'=(v, \bar{I}_v ,\bar{f}_v,\bar{E}_v) \in \cal{D}'$ is a possible realization of 
%the set $I_v$, and edge set $E_v$ defined in Lemma \ref{lem: IVEgood} for $v$, together with 
%the local isomorphism $f_v$ restricted on $I_v$. 
We say that $\Gamma$ is $D'$--compatible for a data structure
$D'=(v, \bar{I}_v ,\bar{f}_v,\bar{E}_v) \in \cal{D}'$
if all of the following hold:}
\begin{itemize}
    \item $I_v=\bar{I}_v$,%\subset V(N_{\Gamma}(v))\backslash \{v\}$,
    \item $ f_v(w)=\bar{f}_v(w)$ for all $w \in \bar{I}_v$,
    \item $E_v= \bar{E}_v$, % \cup \bar{f}_v(\bar{E}_v) \subset E(\Gamma)$.
\end{itemize}
where the random sets $I_v$ and $E_v$ were introduced in Lemma \ref{lem: IVEgood}. 
%$D'$ is {\it compatible} with a given realization of
%$\Gamma$ if $\bar{I}_v \subset V(N_{\Gamma}(v))\backslash \{v\}$, 
%$ \bar{f}_v(w) = f_v(w)$ for $w \in \bar{I}_v$, and $\bar{E}_v \cup \bar{f}_v(\bar{E}_v) \subset E(\Gamma)$. 

By Lemma \ref{lem: IVEgood} and Lemma \ref{lem: IVEtooGood}, for any realization
of $\Gamma$ from the event $\{0<|J_{\cal{M}}^c|\le \delta n\} \cap \Event_{typ}$, \co{there}
exists $D' \in \cal{D}' $ \HL{so that $\Gamma$ is $D'$--compatible.} So to complete the proof of the theorem, 
it remains to show that
\begin{align} \label{eq: IVDataStructure}
    \Prob \Big\{ \exists D' \in \cal{D}' \mbox{ such that } \HL{ \Gamma \mbox{ is $D'$--compatible}}\Big\}
= & n^{-\omega(1)}. 
\end{align}
For any $r\leq k\leq 2n^2p^2$, denote by $\cal{D}'_k$ the subset of structures of $\cal{D}'$
with the set $\bar{E}_v$ of size $k$.
A rough estimate of $|\cal{D}'_k|$ gives 
$$
    |\cal{D}'_k| \le n^{1+4pn} \Big( \frac{ e\cdot 2n^2p^2}{k} \Big)^k=
    \Big( \frac{ e\cdot 2n^2p^2}{k} \Big)^{(1+o(1))k}
    \leq \Big( \frac{ 2000e}{p} \Big)^{(1+o(1))k},
$$
where we used that $\log n=o(p^2n)$, and
where the dominating factor is a bound 
$${\lfloor |\bar{I}_v|^2/2\rfloor \choose k} \le 
{ \lfloor 2n^2p^2\rfloor  \choose k} \le \Big( \frac{ e\cdot 2n^2p^2}{k} \Big)^k$$
on the number of choices of $\bar{E}_v$ after $\bar{I}_v$ has been chosen.
On the other hand, for any $D' \in \cal{D}_k'$, 
$$
    \Prob \{ \HL{ \mbox{$\Gamma$ is $D'$--compatible}}  \}  \le \Prob \{  \bar{E}_v \cup \bar{f}_v(\bar{E}_{v}) \subset E(\Gamma) \}
    = p^{|\bar{E}_v \cup \bar{f}_v(\bar{E}_{v})|}
    \le  \exp \Big( - \log(1/p) \frac{3}{2} k \Big).
$$
Therefore, by the union bound argument, \eqref{eq: IVDataStructure} follows as
long as we assume that $\log\frac{ 2000e}{p}<1.4\log(1/p)$.
\end{proof}

\medskip

As a conclusion of this section, we verify that Theorem~\ref{309847109847} implies Theorem~\ref{19471048710498}.
\begin{proof}[Proof of Theorem~\ref{19471048710498}]
Let the sequence of parameters $(p_n)_{n=1}^\infty$ satisfy
$n^{-1/2}\log^{\max(C,2)}n\leq p_n\leq \min(c,0.0001)$
for all large $n$, where the constants $C,c>0$
are taken from the statement of Theorem~\ref{309847109847}.
For every $n$, let $\Gamma_n$ be a labeled $G(n,p_n)$ Erd\H os--R\'enyi graph on $\{1,2,\dots,n\}$, and let 
$\Event_n$ be the event
\begin{align*}
\Event_n:=\big\{
&\mbox{The unlabeled
graph on $n$ vertices corresponding to $\Gamma_n$}\\
&\mbox{is uniquely reconstructable from the multiset of its $1$--neighborhoods}\big\}.
\end{align*}
Our goal is to show that $\lim\limits_{n\to\infty}\Prob(\Event_n^c)=0$.
Define auxiliary events
\begin{align*}
\Event_n':=\big\{
&\mbox{For every $1\leq i\leq n$ and every vertex $j$ of $\Gamma_n$
adjacent to $i$,}\\
&\mbox{there is a vertex $k\in V(N_{\Gamma_n}(i))\setminus\{i,j\}$
not adjacent to $j$}
\big\}.
\end{align*}
The event $\Event_n'$ ensures the special role of the centers
of the $1$--neighborhoods of $\Gamma_n$, and was previously considered in \cite{GM}
(see \cite[Lemma~7]{GM}).
Specifically, conditioned on any realization of $\Gamma_n$ from
$\Event_n'$, for every vertex $1\leq i\leq n$,
{\it every} choice of a labeled graph $G$ and a vertex $v$ of $G$,
whenever $\phi$ is an isomorphism of $N_{G}(v)$ onto $N_{\Gamma_n}(i)$
then necessarily $\phi(v)=i$.
This implies that everywhere on
the intersection $\Event_n^c\cap \Event_n'$, there is a graph
$\tilde\Gamma_n$ not isomorphic to $\Gamma_n$ such that
for every $1\leq i\leq n$, the $1$--neighborhoods $N_{\Gamma_n}(i)$
and $N_{\tilde\Gamma_n}(i)$ are isomorphic with fixed point $i$.
We can further define
$\tilde\Gamma_n$ to be equal to $\Gamma_n$ everywhere on the complement of
$\Event_n^c\cap \Event_n'$, to obtain a sequence of random graphs $(\tilde\Gamma_n)_{n=1}^\infty$.
Applying Theorem~\ref{309847109847}, we get that $\tilde \Gamma_n=\Gamma_n$
with probability $1-n^{-\omega(1)}$, and hence
$$
\Prob(\Event_n^c\cap \Event_n')=n^{-\omega(1)}.
$$
Thus, to complete the proof it is sufficient to check that
$\Prob(\Event_n')=1-n^{-\omega(1)}$. At this
stage we can either rely on \cite[Lemma~7]{GM}
(with a slightly adjusted range of $p_n$ in that argument), or
apply our Proposition~\ref{prop: GammaTypical}. We will use the latter.
Observe that conditioned on any realization
of $\Gamma_n$ from
$(\Event_n')^c$, there is a pair of vertices $i\neq j$ such that
the set of common neighbors of $i$ and $j$ has size 
$|V(N_{\Gamma_n}(i))|-2$.
Combining this with the probability bound for the event $\Event_{typ}^c$
considered in Proposition~\ref{prop: GammaTypical}
(specifically, the estimates from \eqref{eq: EtypDegree} and~\eqref{eq: EtypCommonNeighbors2}) and the assumptions on $p_n$, we get
$$
\Prob\big(\Event_n'\big)=1-n^{-\omega(1)}.
$$
The result follows.
\end{proof}

\section{Proof of Theorem~\ref{3u14o21u4oi}}\label{s:entropy}

Our proof is based on using the notion of the Shannon entropy.
Recall that the Shannon entropy of a discrete random variable $X$
taking values in some set $T$ is defined as
$$
H_2(X):=-\sum_{t\in T}\Prob\{X=t\}\,\log_2\,\Prob\{X=t\}.
$$
It was shown in \cite{Choi} that
the Shannon entropy of an unlabeled Erd\H os--R\'enyi graph $G$ with parameters $p$ and $m$ satisfying
$\min(m p,m-m p)=\omega(\log m)$ is given by the formula
$$
H_2(G)={m\choose 2}\;\bigg(p\log_2\frac{1}{p}+(1-p)\log_2\frac{1}{1-p}\bigg) - \log_2 m!+ o(1).
$$
Thus, the entropy of the unlabeled graph differs from the entropy of the labeled Erd\H os--R\'enyi graph with the same parameters
by the term $- \log_2 m!+ o(1)$ which can be viewed as a correction obtained by the vertex permutations.
Note that for $mp=\omega(\log m)$ and assuming $p=o(1)$, we get
\begin{equation}\label{313241545151}
H_2(G)=(1+o(1))\frac{m^2p}{2}\,\log_2\frac{1}{p}
\end{equation}
as $m\to\infty$.

Let $\Gamma_n$ be the unlabeled $G(n,p_n)$ Erd\H os--R\'enyi graph, and
let $N_{\Gamma_n,1}$ be the (unlabeled) $1$--neighborhood of a uniform random vertex $v$ of $\Gamma_n$
(where $v$ is \co{assumed} to be independent from $\Gamma_n$).
\co{ Denoting  the set of neighbors of $v$ by $A$}, the size of $A$
is a Binomial($n-1,p_n$) random variable. \co{Conditioned on the size of $A$}, the induced subgraph 
of $N_{\Gamma_n,1}$ obtained from $N_{\Gamma_n,1}$ by removing $v$
is itself an unlabeled Erd\H os--R\'enyi graph
on $|A|$ vertices with parameter $p_n$.
Thus, using the chain rule for the entropy,
\begin{align*}
H_2(N_{\Gamma_n,1})
=\sum_{k=0}^{n-1}{n-1\choose k}p_n^{k}(1-p_n)^{n-1-k}
H_{2,k,p_n}
+H_2(\mbox{Binomial($n-1,p_n$)}),
\end{align*}
where for each $k$, $H_{2,k,p_n}$
is the entropy of an unlabeled Erd\H os--R\'enyi graph on $k$ vertices, with parameter $p_n$. 
%In the regime $\omega(n^{-1/2}\log^{1/2}n)=p_n=o(1)$ we have, by \eqref{313241545151},
%$$
%H_{2,k,p_n}=(1+o(1))\frac{k^2p_n}{2}\,\log_2\frac{1}{p_n}\quad\mbox{ for all $\frac{1}{2}np_n\leq k\leq 2np_n$.}
%$$
%Moreover, for any value of $k$, $H_{2,k,p_n}$ cannot exceed the entropy of a corresponding {\it labeled}
%Erdos--Renyi graph, i.e cannot exceed the quantity $\frac{k(k-1)}{2}\big(p_n\log_2\frac{1}{p_n}+
%(1-p_n)\log_2\frac{1}{1-p_n}\big)$. Thus, using that
%$H_2(\mbox{Binomial($n-1,p_n$)})=\frac{1+o(1)}{2}\log_2 (np_n)$ (see \cite{JS}), we obtain that
%\begin{equation}\label{19u149832-40918}
%H_2(N_{\Gamma_n,1})=(1+o(1))\frac{n^2p_n^3}{2}\,\log_2\frac{1}{p_n},\quad
%\mbox{for $\omega(n^{-1/2}\log^{1/2}n)=p_n=o(1)$}.
%\end{equation}
In the range $\omega(n^{-1}\log n)=p_n=o(1)$, we simply bound each $H_{2,k,p_n}$
by the entropy of the corresponding labeled Erd\H os--R\'enyi graph to obtain
\begin{align}
H_2(N_{\Gamma_n,1})&=O\bigg(
\sum_{k=2}^{n-1}{n-1\choose k}p_n^{k}(1-p_n)^{n-1-k}\Big(k^2 p_n\log \frac{1}{p_n}\Big)
+H_2(\mbox{Binomial($n-1,p_n$)})\bigg)\nonumber\\
&=O\bigg(
n^2p_n^3\log \frac{1}{p_n}
+\log (p_nn)\bigg),\quad\mbox{for $\omega(n^{-1}\log n)=p_n=o(1)$.}\label{1-974109473-97498}
\end{align}

\bigskip

Now, everything is ready to prove Theorem~\ref{3u14o21u4oi}.
%Assume first that
%$H_2(\Gamma_n)\leq (\log^{-C} n)\cdot n\,H_2(N_{\Gamma_n,1})$ for all large enough $n$,
%where the constant $C$ is taken to be three times the constant in Theorem~\ref{19471048710498}.
%Then, combining \eqref{313241545151} and \eqref{1-974109473-97498},
%we get 
%\begin{align*}
%n^2 p_n\log \frac{1}{p_n}=(\log^{-C} n)\cdot n\cdot O\bigg(
%n^2p_n^3\log \frac{1}{p_n}
%+\log (np_n)\bigg),
%\end{align*}
%which can only be satisfied if $1=O((\log^{-C} n)\cdot n p_n^2)$. This, together
%with Theorem~\ref{19471048710498}, implies the first assertion of Theorem~\ref{10=3981-09481-098}.
Note that in view of the above estimates the condition $\omega(n^{-1}\log n)=p_n=o(n^{-1/2})$
implies that $H_2(\Gamma_n)=\omega\big( n\,H_2(N_{\Gamma_n,1})\big)$.
Thus, it is enough to show that under the assumption $H_2(\Gamma_n)=\omega\big( n\,H_2(N_{\Gamma_n,1})\big)$,
the unlabeled $G(n,p_n)$ Erd\H os--R\'enyi graph $\Gamma_n$ is a.a.s non-reconstructable.
Denote by $\mathcal N_n$ the multiset of all unlabeled $1$--neighborhoods of $\Gamma_n$.
By the subadditivity of the entropy, the entropy of $\mathcal N_n$ is bounded above by $n\,H_2(N_{\Gamma_n,1})$,
so our assumption implies
$$
H_2(\Gamma_n)=\omega\big(H_2(\mathcal N_n)\big).
$$
For each $n$, we denote by $\Event^{(n)}$ the event that $\Gamma_n$
is reconstructable from $\mathcal N_{n}$. Note that $\Event^{(n)}$
is measurable with respect to $\Gamma_{n}$,
and that on the event $\Event^{(n)}$, $\Gamma_n$ is a function of $\mathcal N_{n}$,
so that, as long as $\Prob(\Event^{(n)})>0$, we have $H_2(\mathcal N_{n}\,|\,\Event^{(n)})
\geq H_2(\Gamma_n\,|\,\Event^{(n)})$.
To prove the assertion of the theorem, we will argue by contradiction.
Namely, we will assume that there is $\delta>0$ and a subsequence of events $\Event^{(n_k)}$
such that
$\Prob(\Event^{(n_k)})\geq \delta$ for all $k$.
For arbitrary discrete random variable $Y$ and arbitrary non-zero probability event $\Event$ we have,
% using that $\Prob\{Y=t\;|\;\Event\}\leq \frac{\Prob\{Y=t\}}{\Prob(\Event)}$
% and $1-\Prob\{Y=t\;|\;\Event\}\leq \frac{1-\Prob\{Y=t\}}{\Prob(\Event)}$,
% $$H_2(Y\,|\,\Event)=
% \sum_{t}\Prob\{Y=t\;|\;\Event\}\log_2\frac{1}{\Prob\{Y=t\;|\;\Event\}}
% \leq \Prob(\Event)^{-1}H_2(Y).$$
\co{ 
$$
H_2(Y | \Event) \le \Prob( \Event)^{-1} H_2(Y),
$$
which is a specific instance of the general fact that the
conditional entropy of $Y$ never exceeds its [unconditional] entropy: 
$$
H_2(Y) \ge \Prob( \Event )\, H_2(Y | \Event) + \Prob( \Event^c)\, H_2(Y | \Event^c)
$$
(see, for example, \cite[Theorem~2.6.5]{inftheory}).}

Hence, we obtain from our assumptions
$$
H_2(\Gamma_{n_k}\,|\,\Event^{(n_k)})\leq \delta^{-1}H_2(\mathcal N_{n_k})=o(H_2(\Gamma_{n_k})).
$$

Thus, we arrive at the relation $H_2(\Gamma_{n_k}\,|\,\Event^{(n_k)})=o(H_2(\Gamma_{n_k}))$.
It is not difficult to check, however, that the last assertion
is false under our assumption on the probabilities of $\Event^{(n_k)}$.
Indeed, let $\Event_{n_k}'$ be the event that the graph $\Gamma_{n_k}$
has at most $\frac{n(n-1)p_n}{2}+\sqrt{p_n}\,n\log n$ edges.
By the chain rule for entropy, and in view of the restriction that $\Event_{n_k}',\Event^{(n_k)}$ are measurable with respect to $\Gamma_n$, the sum
%we have
\begin{align} \label{eq: entropy00}
&\Prob(\Event^{(n_k)})\,H_2(\Gamma_{n_k}\,|\,\Event^{(n_k)})
+\Prob((\Event^{(n_k)})^c\cap \Event_{n_k}')\,H_2(\Gamma_{n_k}\,|\,(\Event^{(n_k)})^c\cap \Event_{n_k}')\nonumber \\
&\phantom{\Prob(\Event^{(n_k)})\,H_2(\Gamma_{n_k}\,|\,\Event^{(n_k)})} +\Prob((\Event^{(n_k)})^c\cap (\Event_{n_k}')^c)\,
H_2(\Gamma_{n_k}\,|\,(\Event^{(n_k)})^c\cap (\Event_{n_k}')^c)\nonumber \\
=&
 H_2(\Gamma_{n_k}) -\Prob(\Event^{(n_k)})\log_2\frac{1}{\Prob(\Event^{(n_k)})}\\
&\phantom{H_2(\Gamma_{n_k})}-\Prob((\Event^{(n_k)})^c\cap \Event_{n_k}')\log_2\frac{1}{\Prob((\Event^{(n_k)})^c\cap \Event_{n_k}')} \nonumber \\
&\phantom{H_2(\Gamma_{n_k})}-\Prob((\Event^{(n_k)})^c\cap (\Event_{n_k}')^c)\log_2\frac{1}{\Prob((\Event^{(n_k)})^c\cap (\Event_{n_k}')^c)}. \nonumber 
\end{align}
Applying Bernstein's inequality \eqref{eq: BernStein}, we get
$$
\Prob((\Event_{n_k}')^c)\leq 2\exp\bigg(-\frac{(\sqrt{p_n}\,n\log n)^2/2}{p_n{n\choose 2}+\sqrt{p_n}\,n\log n}\bigg)=n^{-\omega(1)}.
$$
This, together with \eqref{eq: entropy00}, the condition $H_2(\Gamma_{n_k}\,|\,\Event^{(n_k)})=o(H_2(\Gamma_{n_k}))$,
and the trivial upper bound $H_2(\Gamma_{n_k}\,|\,(\Event^{(n_k)})^c\cap (\Event_{n_k}')^c)\leq n(n-1)/2$,
give
\begin{align*}
& \Prob((\Event^{(n_k)})^c\cap \Event_{n_k}')\,H_2(\Gamma_{n_k}\,|\,(\Event^{(n_k)})^c\cap \Event_{n_k}') \\
 \ge&\co{   
H_2(\Gamma_{n_k}) -O(1) - \Prob(\Event^{(n_k)})\,H_2(\Gamma_{n_k}\,|\,\Event^{(n_k)})
 - \Prob((\Event^{(n_k)})^c\cap (\Event_{n_k}')^c)\,
H_2(\Gamma_{n_k}\,|\,(\Event^{(n_k)})^c\cap (\Event_{n_k}')^c)} \\
\geq& \co{ H_2(\Gamma_{n_k}) -O(1) -  o( H_2(\Gamma_{n_k})) - n^{-\omega(1)} n(n-1)/2 }\\
\geq& (1-o(1))H_2(\Gamma_{n_k})-O(1),
\end{align*}
where $H_2(\Gamma_{n_k})=(1+o(1))\frac{n^2p_n}{2}\log_2\frac{1}{p_n}$.
However, since on the event $\Event_{n_k}'$ the cardinality of the range of possible realizations of $\Gamma_{n_k}$
is bounded above by $\exp\big((1+o(1))\big(\frac{n(n-1)p_n}{2}+\sqrt{p_n}\,n\log n\big)\log\frac{e}{p_n}\big)$,
we have
$$
H_2(\Gamma_{n_k}\,|\,(\Event^{(n_k)})^c\cap \Event_{n_k}')
\leq (1+o(1))\bigg(\frac{n(n-1)p_n}{2}+\sqrt{p_n}\,n\log n\bigg)\log_2\frac{1}{p_n}.
$$
Since $\Prob((\Event^{(n_k)})^c\cap \Event_{n_k}')\leq 1-\delta$, we finally get
$$
(1-\delta)(1+o(1))\big(\frac{n^2p_n}{2}+\sqrt{p_n}\,n\log n\big)\log_2\frac{1}{p_n}
\geq (1-o(1))\frac{n^2p_n}{2}\log_2\frac{1}{p_n}-O(1),
$$
which is clearly false.
This contradiction implies the assertion of the theorem.

\section{Further questions}\label{s:further}

Our work leaves unaddressed the question of reconstructability of unlabeled Erd\H os--R\'enyi graphs from $2$--neigh\-borhoods.
In \cite{GM}, it was shown that for any constant $\varepsilon>0$ and for the sequence $n^{-1+\varepsilon}
\leq p_n\leq n^{-3/4-\varepsilon}$, the unlabeled $G(n,p_n)$ random graph is not reconstructable with probability $1-o(1)$.
This result has recently been strengthened in \cite{JKRS},
where it was proved that reconstruction from $2$--neighborhoods
is impossible a.a.s in the range $\omega(n^{-5/4})= p_n\leq \frac{1}{3}n^{-3/4}\log^{1/4} n$.
Non-reconstructability results for $1$-- and $2$-- neighborhoods obtained in \cite{GM} rely on
comparison between the number of ``typical'' realizations of $n$--tuples of neighborhoods
and the number of ``typical'' realizations of a $G(n,p_n)$ graph (the word ``typical'' refers to certain graph statistics,
but we prefer to avoid technical details here). %In the aforementioned range for $p_n$, the number of typical graph
%is much bigger which necessarily means that many graph realizations correspond to a same $n$--tuple
%of unlabeled neighborhoods.
This counting argument for $2$--neighborhoods hits a natural threshold around $p\sim n^{-3/4}$
which suggests the following problem (stated earlier in \cite{GM}):
\begin{Problem}
Let $\varepsilon>0$ be any constant, and let the sequence $(p_n)_{n=1}^\infty$
satisfy
$n^{-3/4+\varepsilon}
\leq p_n$ for large $n$. For each $n$, let $\Gamma_n$ be unlabeled Erd\H os--R\'enyi $G(n,p_n)$ graph.
Is it true that $\Gamma_n$ is reconstructable from its $2$--neighborhoods asymptotically almost surely?
\end{Problem}

We refer to the recent paper \cite{JKRS} where a progress towards
solving the above problem was made and, in particular, where it was shown that
for some small positive $\delta$ and for $p_n\geq n^{-2/3-\delta}$, the graph $\Gamma_n$
is reconstructable with high probability from its $2$--neighborhoods.

\medskip

An interesting phenomenon regarding reconstructability of random graphs
emerges when comparing the Erd\H os--R\'enyi and uniform $d$--regular models.
Whereas
reconstruction of Erd\H os--R\'enyi graphs from $r$--neighborhoods for $r\geq 3$
is essentially based on fluctuations of vertex degrees,
in the corresponding $d$--regular case the only source of information
about the global graph structure is 
in the {\it arrangement of short cycles} within the neighborhoods.
For that reason, we expect that reconstruction/non-reconstruction thresholds
for $d$--regular graphs
from $1$-- and $2$--neighborhoods
essentially coincide with those for the corresponding Erd\H os--R\'enyi graphs,
but the threshold for reconstruction from $3$--neighborhoods for $d$--regular random graphs
should be much larger than in the Erd\H os--R\'enyi setting:
\begin{Problem}[Reconstruction from $1$-- and $2$--neighborhoods for random $d$--regular graphs]
Is the reconstructa\-bility/non-reconstructability threshold from $1$-- and
$2$--neighborhoods for random regular graphs
matches respective [conjectured or verified] thresholds for the Erd\H os--R\'enyi graphs with corresponding average degree?

More precisely, let $\varepsilon>0$, and
for each $n$, let $\Gamma_{n}$ be a uniform random $d_n$--regular unlabeled graph on $n$ vertices.
\begin{itemize}
    \item[(a)] Is $\Gamma_{n}$ reconstructable
    from its $1$--neighborhoods a.a.s in the regime $n^{1/2+\varepsilon}\leq d_n$?
    \item[(b)] Is $\Gamma_{n}$ reconstructable
    from its $2$--neighborhoods a.a.s for $n^{1/4+\varepsilon}\leq d_n$?
    \item[(c)] Is $\Gamma_{n}$ non-reconstructable a.a.s
    from its $1$--neighborhoods for 
    $n^{\varepsilon}\leq d_n\leq n^{1/2-\varepsilon}$    
    and from its $2$--neighborhoods as long as $n^{\varepsilon}\leq d_n\leq n^{1/4-\varepsilon}$?
\end{itemize}
\end{Problem}
\begin{Problem}[Reconstruction from $3$--neighborhoods for random $d$--regular graphs]
For each $n$, let $\Gamma_{n}$ be a uniform random $d_n$--regular unlabeled graph on $n$ vertices.
Is it true that there is a constant $c>0$ such that $\Gamma_{n}$ is {\bf not} reconstructable
    from its $3$--nighborhoods a.a.s. whenever $n^{\varepsilon}\leq d_n\leq n^{c-\varepsilon}$
    for all large $n$ and some $\varepsilon>0$?
\end{Problem}

%As a final problem in this section, motivated by the proof of Theorem~\ref{3u14o21u4oi}, we ask
%\begin{Problem}
%Can the reconstructability from graph's $r$--neighborhoods be naturally characterized in terms
%of entropies of the graph and its $r$--neighborhoods?
%\end{Problem}

\end{document}